\theoremstyle{plain}
\newtheorem{thm}{Theorem}[section]
\newtheorem{prop}[thm]{Proposition}
\newtheorem{lem}[thm]{Lemma}
\newtheorem{cor}[thm]{Corollary}
\theoremstyle{definition}\newtheorem{defn}[thm]{Definition}
\newtheorem{rmk}[thm]{Remark}
\newtheorem{example}[thm]{Example}
\newtheorem{note}[thm]{Notation}
\numberwithin{equation}{section}
\renewcommand{\theta}{\vartheta}
\renewcommand{\phi}{\varphi}
\renewcommand{\epsilon}{\varepsilon}
\renewcommand{\subset}{\subseteq}
\renewcommand{\supset}{\supseteq}
\newcommand{\N}{\mathbb N}
\newcommand{\C}{\mathbb C}
\newcommand{\ms}[1]{\mathscr{#1}}
\newcommand{\mc}[1]{\mathcal{#1}}
\newcommand{\bfi}{\mathbf i}
\newcommand{\bfj}{\mathbf j}
\newcommand{\bfr}{\mathbf r}
\newcommand{\bfs}{\mathbf s}
\newcommand{\hra}{\hookrightarrow}
\newcommand{\ra}{\rightarrow}
\newcommand{\tl}{\mathrm{nut}}
\newcommand{\singleton}{\uparrow}
\newcounter{PartitionDepth}
\newcounter{PartitionLength}
\newcommand{\upparti}[2]{
 \begin{picture}(#2,#1)
 \setcounter{PartitionDepth}{#1}
 \put(#2,0){\line(0,1){#1}}
 \end{picture}}
\newcommand{\uppartii}[3]{
 \begin{picture}(#3,#1)
 \setcounter{PartitionLength}{#3-#2}
 \setcounter{PartitionDepth}{#1}
 \put(#2,0){\line(0,1){#1}}
 \put(#3,0){\line(0,1){#1}}
 \put(#2,\thePartitionDepth){\line(1,0){\thePartitionLength}}
 \end{picture}}
\newcommand{\uppartiii}[4]{
 \begin{picture}(#4,#1)
 \setcounter{PartitionLength}{#4-#2}
 \setcounter{PartitionDepth}{#1}
 \put(#2,0){\line(0,1){#1}}
 \put(#3,0){\line(0,1){#1}}
 \put(#4,0){\line(0,1){#1}}
 \put(#2,\thePartitionDepth){\line(1,0){\thePartitionLength}}
 \end{picture}}
\begin{document}

\title[De Finetti theorems for Boolean EQG]{De Finetti theorems for a Boolean analogue of easy quantum groups}
\author{Tomohiro Hayase}
\address{Graduate School of Mathematics\\University of Tokyo\\Komaba, Tokyo 153-8914, Japan}
\keywords{Free probability, easy quantum groups, de Finetti, quantum invariance, Boolean independence, Bernoulli law}
\email{\href{mailto:}{hayase@ms.u-tokyo.ac.jp}}
\date{\today}

\begin{abstract}
We show an organized form of quantum de Finetti theorem for Boolean independence.  We define  a Boolean analogue of easy quantum groups for the categories of interval partitions, which is a family of sequences of quantum semigroups.

We construct the Haar states on those quantum semigroups. The proof of our de Finetti theorem is based on the analysis of the Haar states.   
\end{abstract}

\maketitle

\section*{Introduction}

	In the study of distributional symmetries in probability theory, the permutation group $S_n$ and the orthogonal groups $O_n$ play a central role.
	The de Finetti theorem states that a sequence of real random variables has joint distribution which is stable under each $S_n$ action if and only if it is conditionally independent and identically distributed (i.i.d.\ for short) over its tail $\sigma$-algebra.
	Similarly, the symmetry given by the orthogonal group $O_n$ induces conditionally i.i.d. centered Gaussian random variables.
	See \cite{kallenberg2006probabilistic} for details.

	In noncommutative probability theory, a probability measure space is replaced with a W$^*$-probability space $(M,\phi)$ which is a pair of a von Neumann algebra and a normal state.
	A self-adjoint operator in $M$ has a role as a random variable.
	Contrary to Kolmogorov probability theory, there are several possible notions of independence in noncommutative probability theory.
	By \cite{speicher1997universal}, there exist only three universal independences; the classical independence, the free independence and the Boolean independence.
	Free probability theory is one of the most developed noncommutative probability theory \cite{voiculescu1992free}.
	The Boolean independence appeared in \cite{von1973approach}, \cite{speicher1997boolean}.
	The Boolean one occurs only in the non-unital situations.
	Each universal independence is characterized by a family of multivariate cumulants whose index runs over  one of a category of partitions.
	Free cumulants and Boolean cumulants are determined by noncrossing partitions and interval partitions, respectively.
	By using Boolean cumulants, it can be proven that the central limit distribution of the Boolean independence is the Bernoulli distribution.

	 K{\"o}stler and Speicher have shown the free de Finetti theorem in \cite{kostler2009noncommutative}.
	The theorem states that the symmetry given by the free permutation groups $(C(S_n^+))_{n \in \N}$ induces the conditional free independence.
	The free permutation group $C(S_n^+)$ is the liberation, that is, a free analogue, of $S_n$ (See \cite{banica2012finetti} for the liberation).
	More precisely, the Hopf algebra $C(S_n^+)$ is given by eliminating the commuting relations among the generators of the Hopf algebra $C(S_n)$.
	The free permutation group is one of the free quantum group which appeared in \cite{wang1995free}, \cite{wang1998quantum}.

	An easy quantum group is one of Woronowicz's compact matrix quantum groups  which is characterized  by a tensor category of partitions in the sense of the Tannaka-Krein duality. 
	De Finetti theorems have been proven for easy quantum groups (see \cite{banica2012finetti})
	in particular easy groups $S_n$, $H_n$, $B_n$, $O_n$ and free quantum groups $C(S_n^+), C(H_n^+), C(B_n^+), C(O_n^+)$.
	 It is known that  every compact quantum group admits the unique Haar state
		 \cite{woronowicz1987compact}, and the Haar states have a main role in the 
	de Finetti theorem.

	Liu's work \cite{liu2015noncommutative} starts the research of the de Finetti theorem for the Boolean independence.
	He adds a projection  $\mathbf{P}$ to the generators of free quantum groups $C(S_n^+)$ and  defines  a quantum semigroup (in the sense of \cite{soltan2009quantum}) $\mc{B}_s(n)$ and has proven associated Boolean de Finetti theorem.
	The theorem states that the symmetry given by the family $(\mc{B}_s(n))_{n \in \N}$ characterizes the conditionally Boolean i.i.d.\ random variables.

\subsection*{Main Results}
	To develop the research of the Boolean de Finetti theorem, we are interested in finding  the Haar states on  Boolean quantum semigroups.	
	By using the Haar state, we can apply  the organized strategy for the de Finetti theorems for easy quantum groups \cite{banica2012finetti} in a similar way.		
	We define a Boolean analogue  of permutation group $S_n$ in a different form $Beq_s$.
	
	We do not prove that $Beq_s(n)$ and $\mc{B}_s(n)$ are isomorphic, but we prove that  $Beq_s(n)$ and $\mc{B}_s(n)$ admit same Haar state $h_s$.
	Moreover, we prove that the Boolean quantum semigroups $Beq_h$ on the category $I_h$ and the Boolean quantum semigroups $Beq_o$ on $I_o= I_2$ have unique Haar states $h_h$, $h_o$.  We do not prove the existence of the Haar state on Boolean quantum semigroups $Beq_b$ on  $I_b$, but we prove that of the Haar state on Boolean pr-quantum semigroups $\mc{A}_p[I_b]$.
	
	We first define the notion of categories  of interval partitions which is deeply connected with Boolean independence by Boolean cumulants.
 	By using the categories of interval partitions,
	 we induce the notion of Boolean pre-quantum semigroups $({\mathcal A}_p[D;n])_{n \in \N}$ (see Definition \ref{boo_qg}) which is a sequence of  unital $*$-algebras equipped with coproducts.
	Taking their C$^*$-completion, we define Boolean quantum semigroups $Beq_x(n)$.
	
	For a sequence of coalgebras $(\mathcal{A}(n))_{n \in \N}$, we say that $(x_j)_{j \in \N}$ is $\mathcal A$-invariant if its joint distribution is invariant under the coactions of $(\mathcal{A}(n))_{n \in \N}$.	
 	Then we show the following Boolean de Finetti theorems.
 	\begin{thm}
	Let $(M, \varphi)$ be a pair of a von Neumann algebra and a nondegenerate normal state. Assume $M$ is $\sigma$-weakly generated by self-adjoint elements $(x_j)_{j \in \N}$. 
	Let $M_{nut}$ be the non-unital tail von Neumann algebra.
	
	\begin{enumerate}

		\item[(s)] The following assertions are equivalent;
		\begin{enumerate}
			\item[(0)] The sequence $(x_j)_{j \in \N}$ is $\mc{B}_s$-invariant.
			\item[(alg)]  The sequence $(x_j)_{j \in \N}$ is $\mathcal{A}_p[I]$-invariant.
			\item[(beq)]   The sequence $(x_j)_{j \in \N}$ is $Beq_s$-invariant. 
			\item[(iid)] 
			The elements $(x_j)_{j \in \N}$ are Boolean i.i.d.\ over $M_\mathrm{nut}$.
		\end{enumerate}

		\item[(o)] The following assertions are equivalent;
		\begin{enumerate}
			\item[(alg)]  The sequence $(x_j)_{j \in \N}$ is $\mathcal{A}_p[I_2]$-invariant.
			\item[(beq)]   The sequence $(x_j)_{j \in \N}$ is $Beq_o$-invariant. 
			\item[(iid)] 
			The elements $(x_j)_{j \in \N}$ form a  $M_\mathrm{nut}$-valued Boolean centered Bernoulli family.
		\end{enumerate}
		
		\item[(h)] The following assertions are equivalent;
		\begin{enumerate}
			\item[(alg)]  The sequence $(x_j)_{j \in \N}$ is $\mathcal{A}_p[I_h]$-invariant.
			\item[(beq)]   The sequence $(x_j)_{j \in \N}$ is $Beq_h$-invariant. 
			\item[(iid)] 
			The elements $(x_j)_{j \in \N}$ are Boolean independent, and have even and identically distributions, over $M_\mathrm{nut}$.
		\end{enumerate}

	\item[(b)] The following assertions are equivalent;
	\begin{enumerate}
		\item[(alg)]  The sequence $(x_j)_{j \in \N}$ is $\mathcal{A}_p[I_b]$-invariant.
		\item[(iid)] 
		The elements $(x_j)_{j \in \N}$ form a  $M_\mathrm{nut}$-valued Boolean shifted Bernoulli family.
	\end{enumerate}
	
	\end{enumerate}

 	\end{thm}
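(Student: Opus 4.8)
The plan is to treat all four parts by a common three-step scheme built on the Haar states $h_s$, $h_o$, $h_h$ (and, for (b), the Haar state on the pr-quantum semigroup $\mathcal{A}_p[I_b]$) whose construction is assumed. I would first dispose of the equivalences internal to the quantum-semigroup side. The implication (alg) $\Longleftrightarrow$ (beq) should be immediate from the construction of $Beq_x(n)$ as the C$^*$-completion of $\mathcal{A}_p[D;n]$: the two coactions restrict to the same map on the common dense $*$-subalgebra, so a normal state is $Beq_x$-invariant exactly when it is $\mathcal{A}_p[D]$-invariant. For part (s) the additional equivalence (0) $\Longleftrightarrow$ (alg) would be extracted from the fact, quoted in the introduction, that $\mathcal{B}_s(n)$ and $Beq_s(n)$ carry the same Haar state $h_s$: since invariance can be tested by composing the coaction with the Haar state, and the two coactions have matrix coefficients with identical $h_s$-values on their products, the two invariance notions agree.

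For the direction (iid) $\Longrightarrow$ (invariance) I would verify the defining moment identity directly. Writing the coaction on generators as $\alpha_n(x_i) = \sum_j x_j \otimes u_{ji}$, invariance is the system $\varphi(x_{i_1}\cdots x_{i_k}) = \sum_{j_1,\dots,j_k} \varphi(x_{j_1}\cdots x_{j_k})\, h_x(u_{j_1 i_1}\cdots u_{j_k i_k})$. Assuming the $x_j$ are Boolean i.i.d.\ (respectively centered Bernoulli, even and identically distributed, shifted Bernoulli) over $M_{\mathrm{nut}}$, the left-hand moment expands through the Boolean moment--cumulant formula as a sum over interval partitions, while each Haar value $h_x(u_{j_1 i_1}\cdots u_{j_k i_k})$ is, by construction of $h_x$ from the category of interval partitions $D$, supported on exactly the same interval partitions. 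Matching the two expansions block by block yields the identity; this direction is bookkeeping once the Haar values are known.

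The substance is the converse (invariance) $\Longrightarrow$ (iid). Starting from the moment identity above, I would insert the Weingarten-type description of $h_x$ attached to $D$, so that $\varphi(x_{i_1}\cdots x_{i_k})$ becomes a double sum over interval partitions weighted by the Gram/Weingarten matrix of $D(k)$. The decisive structural feature is that, since $D$ consists of interval partitions, this Gram matrix factorizes over the intervals, so the double sum collapses to a single sum over one interval partition with contributions that are multiplicative across its blocks; this is exactly the statement that the mixed Boolean cumulants of the $x_j$ with non-equal indices vanish and that the surviving cumulants do not depend on the index. To promote this to a statement over the tail, I would identify $M_{\mathrm{nut}}$ as the range of the conditional expectation obtained from the $n\to\infty$ limit of the invariance equations via a reversed-martingale argument adapted to the non-unital setting, after which the Boolean cumulants become $M_{\mathrm{nut}}$-valued and the moment formula reads precisely as Boolean independence with identical distribution over $M_{\mathrm{nut}}$.

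Parts (o), (h), (b) then follow by specializing $D$: restricting to the pair interval partitions $I_2$ forces all Boolean cumulants of order other than two to vanish and all second cumulants to coincide, giving a centered Bernoulli family; restricting to $I_h$ annihilates the odd-order cumulants, giving even identically distributed variables; and $I_b$ produces the shifted Bernoulli family. I expect the main obstacle to be precisely the third step: controlling the Weingarten/Haar computation and correctly constructing the non-unital tail algebra $M_{\mathrm{nut}}$ together with its $M_{\mathrm{nut}}$-valued conditional expectation. The non-unitality that makes Boolean independence live only in the non-unital world obstructs a direct transcription of the free and classical de Finetti arguments, and the normalization introduced by the projection $\mathbf{P}$ must be tracked carefully, especially in the shifted case (b).
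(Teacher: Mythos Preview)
Your proposal rests on a misreading of what invariance means. $\mathcal{A}_p[D]$-invariance is the \emph{operator} identity
\[
\sum_{\bfj\in[n]^k} \varphi(x_{j_1}\cdots x_{j_k})\, p\,u_{j_1 i_1}\cdots u_{j_k i_k}\,p \;=\; \varphi(x_{i_1}\cdots x_{i_k})\, p
\]
in $\mathcal{A}_p[D;n]$, not the scalar identity you wrote with $h_x$ applied. Your integrated equation is only a consequence of invariance, so your argument for (iid) $\Rightarrow$ (invariance) proves too little. The paper handles this direction without any Haar state: one expands $\varphi(x_{j_1}\cdots x_{j_k})$ by the Boolean moment--cumulant formula and uses the \emph{defining relations} $\sum_{\bfi:\pi\le\ker\bfi} p\,u_{\bfi\bfj}\,p = \zeta(\pi,\ker\bfj)\,p$ to collapse the $\bfi$-sum directly. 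Relatedly, your claim that (alg) $\Leftrightarrow$ (beq) is immediate is wrong: $Beq_x(n)$ is the completion of a \emph{quotient} of $\mathcal{A}_p[D;n]$, so $\iota_n$ need not be injective and only (alg) $\Rightarrow$ (beq) is automatic. The paper closes the loop indirectly, by proving (beq) $\Rightarrow$ (iid) (this is where the existence of a Haar \emph{state} on $Beq_x(n)$ is used, to ensure $h_{I_x}=h_x\circ\iota_n$ so that the averaged expectations $\mathcal{E}_n=(\mathrm{id}\otimes h_x)\circ\Phi_n$ coincide with $(\mathrm{id}\otimes h_{I_x})\circ\Psi_n$ and preserve $\varphi$), and then (iid) $\Rightarrow$ (alg).

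For the hard direction (invariance) $\Rightarrow$ (iid), your mechanism is also off. The Weingarten matrix does not ``factorize over the intervals'' in any exact sense; the paper's argument is asymptotic. One first shows $\mathcal{E}_n[X_{j_1}\cdots X_{j_k}]=\sum_{\pi,\sigma\in D(k),\,\sigma\le\ker\bfj}\mathcal{E}_n^{\pi}[X_1,\dots,X_1]\,n^{|\pi|}W_{k,n}(\pi,\sigma)$, then uses the Weingarten estimate $n^{|\pi|}W_{k,n}(\pi,\sigma)=\mu_{I(k)}(\pi,\sigma)+O(1/n)$ to replace this by $\sum_{\sigma\le\ker\bfj}K^{\mathcal{E}_n}_\sigma[X_1,\dots,X_1]$ up to a vanishing error, and finally passes to the limit via a non-unital martingale argument (after identifying $\overline{B_\infty\Omega}=\overline{M_{\mathrm{nut}}\Omega}$ and invoking Liu's construction of $E_{\mathrm{nut}}$ under $\mathcal{B}_s$-invariance). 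The extension from monomials $x_{j_1}\cdots x_{j_k}$ to words $x_{j_1}b_1\cdots b_{k-1}x_{j_k}$ with $b_l\in M_{\mathrm{nut}}$ is a separate induction using the vanishing lemma $K^E_\pi[\dots,y_l b,y_{l+1},\dots]=0$ when $l\sim^\pi l+1$ and $E=e(\cdot)e$; you have not addressed this step at all, and it is where the non-unital structure of $E_{\mathrm{nut}}=e_{\mathrm{nut}}(\cdot)e_{\mathrm{nut}}$ is actually exploited.
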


	The common difficulty in carrying out the proof is that Boolean independence is a non-unital phenomenon.
	That is, if $M$ is a von Neumann algebra and $\phi$ is a faithful normal state on $A$, and $(M_1, M_2)$ is a pair of non-trivial von Neumann subalgebras with $1_M  \in M_1, M_2$.
	Then $(M_1, M_2)$ cannot be Boolean independent in $(M, \phi)$.
	Hence, we consider non-unital embeddings of von Neumann algebras in the arguments of Boolean independence and conditional Boolean independence.

	The main difficulty is to find the Haar states on $(Beq_x(n))_{n \in \N}$. We do that by constructing the GNS-representation of $Beq_s(n)$ on the Hilbert space $L^2(S_n)$ of $L^2$-functions on classical permutation group $S_n$.

	\subsection*{Related Works}
	In recent preprints \cite{liu2015extended} \cite{liu2015onnoncommutative}, Liu generalizes $\mc{B}_s$ in a different form from $Beq_x$ and proves generalized Boolean de Finetti theorems. His strategy does not rely on the Haar states.

	\subsection*{Organization}

	This paper consists of four sections.
	Section 1 is devoted to some preliminaries.
	In Section 2, we introduce the Boolean pre-quantum semigroups $\mathcal{A}_p[D;n]$ and the Boolean quantum semigroups $Beq_x(n)$.
	Section 3 provides a detailed exposition of the Haar functionals and the Haar states.
	In Section 4, our main results, the Boolean de Finetti type results are proved.

\section{Preliminaries}

\subsection{Partitions}
Let us review some notations related to partitions of a set.

\begin{note}\hfill
\begin{enumerate}

\item
A partition of a set $S$ is a decomposition into mutually disjoint, non-empty  subsets.  Those subsets are called blocks of the partition. We denote by $P(S)$ the set of all partitions of $S$.

\item
For a partition $\pi$ of a set $S$ and $r, s \in S$, %
  we define $r \underset{\pi}{\sim} s $ if $r$ and $s$ belong to the same block of $\pi$.
\item
Let $S, J$ be any sets and $\bfj \in$ Map $(S, J)$. We  denote by $\ker\bfj$ the partition of $S$ defined as $r \underset{\ker\bfj}{\sim} s $ if and only if $j(r)=j(s)$.
\item
  For $\pi, \sigma \in P(S)$,  we write $\pi \leq \sigma$  if each block of $\pi$ is a subset of some block of $\sigma$. The set $P(S)$ is a poset under the relation $\leq$.
 \item
  We set for $\pi, \sigma \in P(S),$
  \begin{align*}
  \delta(\pi, \sigma) := \begin{cases} 1, \text{ if } \pi = \sigma,\\
  									0,  \text{ otherwise,}
  							\end{cases}
  							\
 \zeta(\pi, \sigma) :=  \begin{cases} 1, \text{ if } \pi \leq \sigma,\\
   									0,  \text{ otherwise.}
   							\end{cases}
  \end{align*}

  \end{enumerate}
\end{note}

We introduce the M{\"o}bius function.  See \cite{nica2006lectures} for more details.

\begin{defn}[The M{\"o}bius function] \label{moeb}
Let $(P, \leq)$ be a finite poset.
The M{\"o}bius function $\mu_P \colon P^2 \ra \C$ is defined as the inverse of $\zeta$, that is, determined by  the following relations: for any $\pi, \sigma \in P$ with $\pi \not \leq \sigma$, $\mu_{P}(\pi, \sigma) = 0$, and for any $\pi, \sigma \in P$ with $\pi \leq \sigma$ ,
  \begin{align}\label{Moebious taking an interval}
  \sum_{\substack{\rho \in P \\ \pi \leq \rho \leq \sigma}} \mu_P(\pi, \rho) = \delta(\pi, \sigma), \
  \sum_{\substack{\rho \in P \\ \pi \leq \rho \leq \sigma}} \mu_P(\rho, \sigma) = \delta(\pi, \sigma),
  \end{align}

  \end{defn}

The following remark is one of the most important properties of the M{\"o}bius function to prove de Finetti theorems.

\begin{prop}
Let $Q$ be a subposet of $P$ which is closed under taking an interval, that is, if $\pi, \sigma \in Q, \rho \in P$ and $\pi \leq \rho \leq \sigma$ then $\rho \in Q$.
 Then for any $\pi, \sigma \in Q$ with $\pi \leq \sigma$, we have $
 \mu_Q(\pi, \sigma)=\mu_P(\pi, \sigma).
 $
\end{prop}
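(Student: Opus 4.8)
The plan is to exploit the locality of the M{\"o}bius function: the recursion in Definition \ref{moeb} determines $\mu(\pi,\sigma)$ purely from the structure of the interval $[\pi,\sigma] := \{\rho : \pi \leq \rho \leq \sigma\}$, and interval-closedness forces this interval to be the same whether computed inside $Q$ or inside $P$. So I would first record the key set-theoretic fact: for $\pi, \sigma \in Q$ with $\pi \leq \sigma$,
\[
\{\rho \in Q : \pi \leq \rho \leq \sigma\} = \{\rho \in P : \pi \leq \rho \leq \sigma\},
\]
since any $\rho \in P$ with $\pi \leq \rho \leq \sigma$ already belongs to $Q$ by hypothesis. Consequently the defining sums for $\mu_Q$ and $\mu_P$ range over exactly the same finite set.

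Next I would argue by induction on the number of elements of $[\pi,\sigma]$. In the base case $\pi = \sigma$, the recursion forces $\mu_Q(\pi,\pi) = 1 = \mu_P(\pi,\pi)$. For the inductive step, assume $\pi < \sigma$ and that the claim holds for every interval properly contained in $[\pi,\sigma]$. Isolating the top term $\rho = \sigma$ in the first relation of Definition \ref{moeb}, which reads $\sum_{\pi \leq \rho \leq \sigma}\mu(\pi,\rho) = \delta(\pi,\sigma) = 0$, I obtain for each of $\mu = \mu_Q$ and $\mu = \mu_P$ the expression
\[
\mu(\pi,\sigma) = -\sum_{\pi \leq \rho < \sigma}\mu(\pi,\rho).
\]
Every $\rho$ occurring here satisfies $\pi \leq \rho < \sigma$, so $[\pi,\rho]$ is a proper subinterval of $[\pi,\sigma]$, and the inductive hypothesis gives $\mu_Q(\pi,\rho) = \mu_P(\pi,\rho)$. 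Since the summation ranges coincide by the first step, the two right-hand sides are equal, whence $\mu_Q(\pi,\sigma) = \mu_P(\pi,\sigma)$.

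I do not expect a genuine obstacle: the entire content is the observation that the zeta-inversion recursion is a local computation. The one point that must be stated carefully is precisely where interval-closedness enters — it is exactly the hypothesis that makes the index sets in the two recursions agree, and without it the identity would break down. Everything else follows from the uniqueness of the inverse of $\zeta$ guaranteed by Definition \ref{moeb}.
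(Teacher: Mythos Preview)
Your proof is correct and follows essentially the same approach as the paper: the paper's proof is the single sentence ``The proposition follows from the relations \eqref{Moebious taking an interval},'' and your argument simply spells out the standard induction on the interval that makes this work.
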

\begin{proof}
The proposition follows from the relations  \eqref{Moebious taking an interval}
\end{proof}

We define the notion of categories of interval partitions.

\begin{defn}
	A partition $\pi \in  P(k)$ is said to be \emph{an interval partition} of $[k]$ if each block contains only consecutive elements. We denote by $I(k)$ the set of all interval partitions of $[k]$.
\end{defn}

\begin{defn}
	The \emph{tensor product} $\otimes$ of partitions is defined  by horizontal concatenation.

\end{defn}

\begin{defn}\label{cat_interval}
	\emph{A category of interval partitions} is a  collection $D= (D(k))_{k \in \N}$ of subsets $D(k) \subset I(k)$, subject to the following conditions.

	\begin{enumerate}
		\item It is stable under the tensor product $\otimes$.
		\item It contains the pair partition $\sqcap$.
	\end{enumerate}
		For a  category of interval partitions $D$, let us denote
		$
			L_D :=  \{ k \in \N : {\bf 1}_k \in D(k) \},
		$
		where  ${\bf1}_k \in P(k)$ is the partition  which contains only one block $\{1, 2, \dotsc, k \}$.

\end{defn}

\begin{note}
	We denote by $I_h(k), I_b(k),$ and  $I_2(k) \subset I(k)$ the set of all interval partitions with even block size, with block size $\leq 2$, and with block size $2$ of $[k]$, respectively.
	Then each $I_x$ $(x=h,b,2)$ is a category of interval partitions. We also write $I_s = I$, $I_o = I_2$.
	Then we have $L_{I_s} = \N$, $L_{I_o} = \{2\}$, $L_{I_h} = \{2, 4, 6, \dots\}$ and $L_{I_b} = \{1,2\}$.
	\end{note}

\begin{note}
For $n \in \N$, we denote  by $l^2_n$  the standard  $n$-dimensional Hilbert space.
For $k \in \N$ and $\pi \in P(k)$, set a vector in ${l^2_n}^{\otimes k}$ by
\begin{align*}
 T^{(n)}_\pi := \sum_{\substack{\bfj \in [n]^k,\\ \pi \leq \ker\bfj}} e_\bfj,
 \end{align*}
 where $(e_i)_{i \in [n] }$ is a fixed complete orthonomal basis of $l^2_n$ and
$
  e_\bfj := e_{j_1} \otimes e_{j_2} \otimes \cdots e_{j_k}.
 $
For a category of interval partition $D$, let  $H^{D(k)}(n) \in B({l^2_n}^{\otimes k})$
be the orthogonal projection onto the subspace Span$\{T^{(n)}_\pi \mid \pi \in D(k)\}$.
We omit the index $(n)$ if there is no confusion.
We set
\begin{align*}
H^{D(k)}_{\bfi \bfj}:= \langle e_\bfi, H^{D(k)}e_\bfj \rangle.
\end{align*}

\end{note}
\begin{defn}[The Weingarten function]

   For $\pi, \sigma \in P(k)$, set the Gram matrix $G_{k,n}$ by
   $
    G_{k,n}(\pi, \sigma) := \langle T^{(n)}_\pi, T^{(n)}_\sigma \rangle $ $= n^{|\pi \vee \sigma|}$.
    Let $D$ be a category of interval partitions.
     Since the family $(T^{(n)}_\pi)_{\pi \in D(k)}$ is linearly independent for large $n$, %
  $G_{k,n}$ is invertible  for sufficiently large $n$. %
  We define the Weingarten function $W_{k,n}^D$ to be its inverse. %

\end{defn}

\begin{prop}\label{rmk_wein}
 Let $D$ be a category of interval partitions.   For any $\bfi, \bfj \in [n]^k$ and  sufficiently large $n$, we have
    \begin{align*}
    H^{D(k)}_{\bfi, \bfj} = %
    \sum_{\substack{\pi, \sigma \in D(k)\\ \pi \leq \ker \bfi \\ \sigma \leq \ker\bfj}} W_{k,n}^{D}(\pi, \sigma).
    \end{align*}

\end{prop}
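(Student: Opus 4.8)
The plan is to use the standard description of the orthogonal projection onto the linear span of a finite family of linearly independent vectors in terms of the inverse of their Gram matrix. Because $D$ is a category of interval partitions and $n$ is large, the excerpt guarantees that $(T^{(n)}_\pi)_{\pi \in D(k)}$ is linearly independent, so $G_{k,n}$ is invertible with inverse $W_{k,n}^{D}$. My candidate for the projection is the operator
\[
\Theta := \sum_{\pi, \sigma \in D(k)} W_{k,n}^{D}(\pi, \sigma)\, T^{(n)}_\pi \, \langle T^{(n)}_\sigma, \,\cdot\, \rangle ,
\]
whose range is manifestly contained in $\mathrm{Span}\{T^{(n)}_\pi : \pi \in D(k)\}$.

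First I would check that $\Theta$ is indeed the orthogonal projection $H^{D(k)}$. Evaluating on a spanning vector $T^{(n)}_\gamma$ and using $\langle T^{(n)}_\sigma, T^{(n)}_\gamma \rangle = G_{k,n}(\sigma, \gamma)$, the inner sum telescopes through $W_{k,n}^{D} G_{k,n} = \mathrm{id}$, giving $\Theta\, T^{(n)}_\gamma = T^{(n)}_\gamma$; hence $\Theta$ restricts to the identity on the span and is idempotent. Self-adjointness follows from the fact that $G_{k,n}$, and therefore its inverse $W_{k,n}^{D}$, is a real symmetric matrix, since every $T^{(n)}_\pi$ has entries in $\{0,1\}$. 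These properties together characterize the orthogonal projection onto the span, so $\Theta = H^{D(k)}$.

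The second step is the elementary computation of the pairings with the standard basis. From the definition of $T^{(n)}_\pi$ as the sum of those $e_{\bfj'}$ with $\pi \leq \ker \bfj'$, one reads off $\langle e_\bfi, T^{(n)}_\pi \rangle = \zeta(\pi, \ker \bfi)$, equal to $1$ when $\pi \leq \ker \bfi$ and $0$ otherwise, and likewise $\langle T^{(n)}_\sigma, e_\bfj \rangle = \zeta(\sigma, \ker \bfj)$. Substituting these into $H^{D(k)}_{\bfi\bfj} = \langle e_\bfi, \Theta\, e_\bfj \rangle$ gives
\[
H^{D(k)}_{\bfi\bfj} = \sum_{\pi, \sigma \in D(k)} \zeta(\pi, \ker \bfi)\, W_{k,n}^{D}(\pi, \sigma)\, \zeta(\sigma, \ker \bfj),
\]
and dropping the vanishing terms leaves exactly the sum over $\pi \leq \ker \bfi$ and $\sigma \leq \ker \bfj$ claimed in the statement.

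I do not expect a genuine obstacle here: once the Gram-matrix formula for the projection is in place the argument is routine Weingarten calculus. The only points demanding care are invoking the large-$n$ linear independence so that $W_{k,n}^{D}$ is defined, verifying self-adjointness of $\Theta$ through the symmetry of $G_{k,n}$, and keeping track of the indexing convention (which of $\bfi, \bfj$ pairs with $\pi$ versus $\sigma$) so that the final expression matches the stated one.
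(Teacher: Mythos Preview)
Your argument is correct and is precisely the standard Gram-matrix/Weingarten argument that the paper is implicitly invoking: the paper's own ``proof'' simply states that this is a special case of a well-known result and refers to \cite{banica2012finetti}. What you have written is exactly the computation one finds there, so there is nothing to compare.
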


\begin{proof}
This is a special case of a well-known result,  see \cite{banica2012finetti} for more details.
\end{proof}

\begin{defn}
A category $D$ of interval partitions is said to be \emph{closed under taking an interval} if
for any $k \in \N$ and $\rho, \sigma \in D(k)$, we have
\begin{align*}
 \{ \pi \in I(k) \mid \rho \leq \pi \leq \sigma \} =  \{ \pi \in D(k) \mid \rho \leq \pi \leq \sigma \}.
 \end{align*}
\end{defn}

\begin{prop}[The Weingarten estimate]\label{prop_wein_est}

Assume $D$ is closed under taking an interval.
For any $\pi, \sigma \in D(k)$,
   \begin{align*}
    n^{ | \pi | }W_{k,n}^D(\pi, \sigma) = \mu_{I(k)}(\pi, \sigma) + O(\frac{1}{n})\  (\text{as } n \ra \infty ),
   \end{align*}
\end{prop}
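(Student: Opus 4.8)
The plan is to invert the Gram matrix $G_{k,n}(\pi,\sigma)=n^{|\pi\vee\sigma|}$ by peeling off its diagonal scale and treating the remainder as a small perturbation, and then to recognize the limiting coefficients as Möbius values through a chain count. First I would factor $G_{k,n}=\Delta^{1/2}B_n\Delta^{1/2}$, where $\Delta$ is the diagonal matrix with entries $n^{|\pi|}$ and $B_n(\pi,\sigma)=n^{|\pi\vee\sigma|-(|\pi|+|\sigma|)/2}$. The elementary input is that $2|\pi\vee\sigma|\le|\pi|+|\sigma|$ with equality iff $\pi=\sigma$: indeed $|\pi\vee\sigma|\le|\pi|$ and $|\pi\vee\sigma|\le|\sigma|$, and simultaneous equality forces $\sigma\le\pi$ and $\pi\le\sigma$. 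Hence $B_n$ equals the identity plus a matrix $N_n$ with vanishing diagonal whose off-diagonal entries are $n^{-e(\pi,\sigma)}$ with $e(\pi,\sigma)=(|\pi|+|\sigma|)/2-|\pi\vee\sigma|>0$; since $D(k)$ is finite, $\norm{N_n}_{\mathrm{op}}\to 0$, so $B_n$ is invertible for large $n$ and $B_n^{-1}=\sum_{m\ge0}(-1)^m N_n^m$.

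Next I would evaluate $n^{|\pi|}W_{k,n}^D(\pi,\sigma)=n^{(|\pi|-|\sigma|)/2}B_n^{-1}(\pi,\sigma)$ term by term. Expanding $(N_n^m)(\pi,\sigma)$ as a sum over paths $\pi=\rho_0,\rho_1,\dots,\rho_m=\sigma$ with consecutive entries distinct, each path contributes $n^{\alpha}$, and telescoping the sum $\sum_{i=1}^m(|\rho_{i-1}|+|\rho_i|)/2$ against the prefactor gives
\[
\alpha=-|\sigma|-\sum_{i=1}^{m-1}|\rho_i|+\sum_{i=1}^{m}|\rho_{i-1}\vee\rho_i|.
\]
Using $|\rho_{i-1}\vee\rho_i|\le|\rho_i|$ for each $i$ yields $\alpha\le0$, with equality precisely when $\rho_{i-1}\vee\rho_i=\rho_i$ for all $i$, i.e. when the path is a strictly increasing chain $\pi=\rho_0<\rho_1<\dots<\rho_m=\sigma$ in the poset $D(k)$. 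Therefore only chains survive in the limit, and
\[
\lim_{n\to\infty}n^{|\pi|}W_{k,n}^D(\pi,\sigma)=\sum_{m\ge0}(-1)^m\,\#\{\pi=\rho_0<\cdots<\rho_m=\sigma \text{ in } D(k)\}.
\]

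By the classical theorem of P.\ Hall expressing the Möbius function as the alternating count of chains (see \cite{nica2006lectures}), this sum equals $\mu_{D(k)}(\pi,\sigma)$; and since $D$ is closed under taking an interval, $D(k)$ is a subposet of $I(k)$ closed under intervals, so the earlier Proposition gives $\mu_{D(k)}(\pi,\sigma)=\mu_{I(k)}(\pi,\sigma)$. To upgrade convergence to the stated $O(1/n)$ rate, I would observe that the entries of $G_{k,n}$ are polynomials in $n$, so each $W_{k,n}^D(\pi,\sigma)$, being an entry of $G_{k,n}^{-1}$, is a rational function of $n$; a rational function with a finite limit $L$ at infinity differs from $L$ by $O(1/n)$, which gives the claim once the limit has been identified with $\mu_{I(k)}(\pi,\sigma)$.

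I expect the main obstacle to be making the term-by-term passage to the limit rigorous, i.e.\ interchanging $\lim_{n\to\infty}$ with both the Neumann series $\sum_m(-1)^mN_n^m$ and the path expansion. The clean way is to use the uniform bound $\norm{N_n}_{\mathrm{op}}\le 1/2$ for all large $n$ to truncate the series at a fixed $M$ with error at most $2^{-M}$ independent of $n$, then compute the finitely many limits for $m\le M$ via the chain analysis above, and finally let $M\to\infty$; this is harmless because chains in the finite poset $D(k)$ have length bounded by $|D(k)|$, so only finitely many $m$ contribute to the limit. The remaining bookkeeping—the exponent telescoping and the verification of the equality case—is routine.
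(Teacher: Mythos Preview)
Your argument is correct, but it is considerably more elaborate than what the paper does. The paper's proof is a two-line citation: it invokes \cite[Prop.~3.4]{banica2012finetti} to get $n^{|\pi|}W_{k,n}^D(\pi,\sigma)=\mu_{D(k)}(\pi,\sigma)+O(1/n)$ directly, and then applies the earlier proposition on Möbius functions of interval-closed subposets to replace $\mu_{D(k)}$ by $\mu_{I(k)}$. You instead reprove the cited Weingarten asymptotic from scratch: the diagonal rescaling $G_{k,n}=\Delta^{1/2}(I+N_n)\Delta^{1/2}$, the Neumann expansion, the path analysis showing that only strictly increasing chains contribute in the limit, Hall's chain formula for $\mu_{D(k)}$, and the rational-function argument for the $O(1/n)$ rate. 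Both routes finish with the same appeal to interval-closedness. What your approach buys is self-containment (no reliance on the Banica--Curran--Speicher paper) and an explicit mechanism explaining \emph{why} the Möbius function appears; what the paper's approach buys is brevity, since the heavy lifting has already been done in the literature it builds on. Your handling of the limit interchange and the $O(1/n)$ upgrade via rationality is sound; the only cosmetic point is that the reference \cite{nica2006lectures} may not state Hall's chain formula explicitly, so in a write-up you might cite a combinatorics text (e.g.\ Stanley) for that step.
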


\begin{proof}
By \cite[Prop.3.4]{banica2012finetti}, it holds that $n^{ | \pi | }W_{k,n}^D(\pi, \sigma) = \mu_{D(k)}(\pi, \sigma) + O(1/n)$,
as $n \ra \infty$.
Since the subposet $D(k) \subset I(k)$ is closed under taking an interval, %
we have $\mu_{I(k)} = \mu_{D(k)}$,
which proves the proposition.

\end{proof}

\begin{rmk}
We call a category of interval partition $D$ is \emph{join-stable } or \emph{$\vee$-stable} if
    $\sigma \vee \rho \in D(k)$ for any $\sigma, \rho \in D(k), k \in \N$.
We see that each category of interval partitions $I_s, I_o, I_h$ is $\vee$-stable. Therefore, for $x =s,o,h$, there exists the interval partition $\max_{I_x (k)} W \in I_x(k)$  for any nonempty subset $W \subset I_x(k)$ with $W \vee W \subset W$.

However, the category $I_b$ is not $\vee$-stable.
For example,
\setlength{\unitlength}{0.5cm}
\begin{center}
\begin{picture}(18,2)
\put(-0.1,1){\uppartii{1}{1}{2}}
\put( 2, 1){\upparti{1}{1}}

\put(5, 1.3){$\vee$}

\put( 6,1){\upparti{1}{1}}
\put( 7,1){\uppartii{1}{1}{2}}

\put(11, 1.3){$=$}

\put(12, 1){\uppartiii{1}{1}{2}{3}}

\put(17, 1.3){$\not \in I_b(3).$ }

\end{picture}
\end{center}
\end{rmk}

\begin{note}
Let the index $x$ be one of $s, o, h$.
For any $k \in \N$ and $\sigma \in P(k)$,
we write
\[
\inf_{I_x} \sigma  := \max\{\pi \in I_x(k) \mid \pi \leq \sigma \}.
\]

\end{note}

\subsection{Nonunital tail von Numann algebras}
Let us define non-unital tail von Neumann algebras.
In this paper, we do not assume that an embedding of $*$-algebras, C$^*$-algebras or von Neumann algebras is unital.

\begin{defn}\label{note_intro}\hfill
\begin{enumerate}
\item For $n \in \N$, denote by $\ms P_{n}^{o}$ (resp.\,$\ms P_{\infty}^o$) the $*$-algebra of all polynomials without constant terms in noncommutative $n$-variables $X_1, \dots, X_n$ (resp.\,countably infinite many variables $(X_j)_{j \in \N}$).
\item
Let  $M$ be a von Neumann algebra. Let $(x_j)_{j \in \N}$ be a sequence of self-adjoint elements in $M$.
Denote by $\mathrm{ev}_x \colon \ms P_{\infty}^o \ra M$ the evaluation map
$\mathrm{ev}_x(X_j) = x_j.$
Let us denote by $M_\mathrm{nut}$ the non-unital tail von Neumann algebra, that is,
\begin{align*}
M_\mathrm{nut} := \bigcap_{n = 1}^\infty \overline{\mathrm{ev}_x(\ms P^o_{\geq n})}^{\sigma w},
\end{align*}
where $\ms P^o_{\geq n} := \{ f \in \ms P^o_\infty \mid f \text{ is a polynomial in variables } X_j \, (j \geq n) \}$.
\end{enumerate}
\end{defn}

We define  the notion of conditional expectations for non-unital embeddings.

\begin{defn}\label{cond_exp}
Let $\eta \colon B \hra A$ be an embedding of  $*$-algebras.
A linear map $E \colon A \ra B$ is said to be a conditional expectation with respect to $\eta$ if it satisfies the following conditions:
  \begin{enumerate}
  \item $E(x^*x) \geq 0$ for all $x \in A,$
  \item $E \circ \eta = \text{id}_B,$
  \item $E(\eta(b)x) = bE(x), E(x\eta(b))= E(x)b$ for all $b \in B, x \in A$.
  \end{enumerate}

\end{defn}

\begin{defn}
Let $A, B, \eta$ and $E$ be the same as in Definition~\ref{cond_exp}.
  Let $(a_j)_{j \in J}$ be  self-adjoint elements in $A$.
  We say $(a_j)_{j \in J}$ are identically distributed over $(E,B)$ if $E[a_i^k] = E[a_j^k]$ holds for any $i, j \in J,$ and $k \in \N$.
\end{defn}

Let us introduce the notion of conditional Boolean independence.

\begin{defn}
	Let $\eta \colon B \hra A$ be a non-unital embedding of unital $*$-algebras $A, B$ with a conditional expectation $E \colon A \ra B$. Let $1_A$ be a unit of $A$.
	Let $(x_j)_{j \in J}$ be a family of self\text{-}adjoint elements of $A$.
	Write
	\[
	B \langle x_j \rangle^o := \mathrm{Span} \bigcup_{n=1}^\infty%
	\{ b_0 x_j b_1 x_j \dots b_{n-1} x_j b_n\mid%
     b_0, \dots, b_n \in B \cup \{1_A\}	\}.
	\]
  The elements $(x_j)_{j \in J}$ are said to be \emph{Boolean independent over $(E,B)$} if
    \begin{align*}
    E[y_1 \dotsb y_k] = E[y_1] \dotsb E[y_k],
    \end{align*}
    whenever $k \in \N$, $j_1, \dotsc, j_k \in J, j_1 \neq j_2 \neq \dotsb \neq  j_k$, and $y_l \in B \langle x_{j_l} \rangle^o$, $l =1, \dotsc, k$.
\end{defn}

\begin{lem}\label{rewrite boolean indep}
 The elements $(x_j)_{j \in \N}$ are Boolean independent and identically distributed over $(E, B)$ if and only if the following holds: for any $j_1, \dots, j_k \in \N$ and $b_0, b_1, \dots b_k \in B \cup \{1_A\}$,
	\begin{align*}
		E[b_0x_{j_1}b_1x_{j_2}b_2 \dotsb x_{j_k}b_k] =  b_0 \cdot \prod_{V \in \inf_I \ker \bfj}^{\ra} E[ \prod_{l \in V}^{\ra} x_{j_l}b_l].
	\end{align*}

\end{lem}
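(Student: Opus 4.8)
The plan is to prove both directions directly from the definitions, the only genuine input being a combinatorial reading of the interval partition $\inf_I \ker \bfj$. First I would record the following: for consecutive positions one has $l \underset{\inf_I\ker\bfj}{\sim} l+1$ exactly when $j_l = j_{l+1}$, so the blocks $V_1 < \dots < V_m$ of $\inf_I\ker\bfj$, ordered from left to right, are precisely the maximal runs of consecutive positions sharing a common index value. In particular each $V_p$ carries a well-defined index $i_p$ with $j_l = i_p$ for all $l \in V_p$, and consecutive blocks carry distinct indices $i_1 \neq i_2 \neq \dots \neq i_m$. This is immediate from $\inf_I\ker\bfj = \max\{\pi \in I(k) \mid \pi \leq \ker\bfj\}$ together with the fact that every block of an interval partition below $\ker\bfj$ is a set of consecutive positions contained in a single block of $\ker\bfj$.

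For the forward implication, assume the family is Boolean independent and identically distributed over $(E,B)$. I would write the monomial as $b_0 \cdot (x_{j_1}b_1)(x_{j_2}b_2)\dotsb (x_{j_k}b_k)$ and group the factors $(x_{j_l}b_l)$ according to $V_1 < \dots < V_m$, setting $y_p := \prod_{l \in V_p}^{\ra} x_{j_l}b_l$. Since every position of $V_p$ shares the index $i_p$ and the $b_l$ lie in $B \cup \{1_A\}$, each $y_p$ belongs to $B\langle x_{i_p}\rangle^o$, and by the observation the indices alternate. One then pulls $b_0$ out by the module property (3) of the conditional expectation (the case $b_0 = 1_A$ is trivial, the case $b_0 \in B$ is exactly $E(\eta(b_0)x)=b_0E(x)$) and applies the definition of Boolean independence to the alternating product $y_1 \dotsb y_m$, obtaining $E[b_0 y_1 \dotsb y_m] = b_0\, E[y_1]\dotsb E[y_m]$, which is the claimed right-hand side.

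For the converse, assume the displayed identity for every monomial. To recover Boolean independence, fix $j_1 \neq \dots \neq j_k$ and $y_l \in B\langle x_{j_l}\rangle^o$; by multilinearity it suffices to take each $y_l = c^{(l)}_0 x_{j_l} c^{(l)}_1 \dotsb x_{j_l} c^{(l)}_{n_l}$ with $c^{(l)}_r \in B\cup\{1_A\}$. Concatenating produces a single monomial in which the boundary factors $c^{(l)}_{n_l}c^{(l+1)}_0$ merge into one interior element of $B\cup\{1_A\}$ (here $B$ being an algebra with $1_A$ acting as a unit is what keeps the factor admissible); the hypothesis $j_1\neq\dots\neq j_k$ makes the runs maximal, so the blocks of $\inf_I\ker$ are exactly the supports of the $y_l$. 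Applying the identity puts each merged boundary factor into the left block, but the module property (3) redistributes it — pulling $c^{(l+1)}_0$ back out on the right of block $l$ and into the start of $E[y_{l+1}]$ — so the product of block expectations telescopes precisely to $E[y_1]\dotsb E[y_k]$. The identical-distribution clause is then handled by specializing the identity to single-index words. I expect the main obstacle to be this non-unital bookkeeping: tracking which merged products of the $b$'s and $c$'s remain in $B\cup\{1_A\}$, verifying that the telescoping of the block factors via (3) reproduces $E[y_1]\dotsb E[y_k]$ exactly, handling the two cases $b_0\in B$ and $b_0=1_A$ uniformly, and reconciling the identical-distribution condition with the factorization in the converse.
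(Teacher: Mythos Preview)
Your approach is the same as the paper's: identify that the blocks of $\inf_I\ker\bfj$ are the maximal runs of consecutive equal indices, and then use the $B$-bimodule property of $E$ together with linearity to pass between the definition of Boolean independence and the displayed factorization. The paper's proof is a single sentence recording the combinatorial fact and invoking linearity; you have simply written out both directions in detail, and your telescoping argument via property~(3) for the converse is correct.

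There is one genuine gap, which you in fact anticipate in your closing remarks. Your claim that the identical-distribution clause ``is handled by specializing the identity to single-index words'' does not work: for a constant tuple $\bfj=(i,\dots,i)$ one has $\inf_I\ker\bfj=\mathbf 1_k$, and the displayed identity reduces to $E[b_0x_ib_1\cdots x_ib_k]=b_0\,E[x_ib_1\cdots x_ib_k]$, which is just the module property and says nothing about comparing different indices. In fact the right-hand side of the lemma carries $x_{j_l}$, not $x_1$, so as written it encodes exactly Boolean independence and no more; the ``identically distributed'' clause on the left cannot be recovered from it. This is a wrinkle in the statement rather than in your method (compare with the proof of Theorem~\ref{thm_indep}, where the corresponding formula has $x_1$ throughout), and your suspicion that ``reconciling the identical-distribution condition with the factorization in the converse'' would be an obstacle is well placed.
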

\begin{proof}
	For  $r,s \in [n]$, $r \sim^{\inf_I \ker \bfj} s$ if and only if  $r$ and $s$ are consecutive elements and $j_r=j_s$. By the linearlity of $E$, we have the claim.
\end{proof}

\subsection{Boolean cumulants}

	In operator-valued free probability,
operator-valued cumulants characterize the conditional free independence (see \cite{nica2006lectures} \cite{speicher1998combinatorial}).
	We introduce some properties of the operator-valued Boolean cumulants. They combinatorially characterize conditional Boolean independence.
	Single variate Boolean cumulants are defined in \cite{speicher1997boolean}.
	As far as the author knows, multivariate Boolean cumulants first appeared in \cite{lehner2004cumulants}.

	Throughout this section, we suppose $B \subset A$ is an embedding of $*$-algebras (not necessarily unital) with a normal conditional expectation $E$.

\begin{note}\hfill
	\begin{enumerate}
	\item
	Let $(S, \leq)$ be a finite totally ordered set and we write $S = \{s_1 < s_2 < \dots < s_n\}.$
	For a family $(a_s)_{s\in S}$ of elements in $M$, we denote by $\prod_{s \in S}^{\ra}a_s$ the ordered product  $\prod_{s \in S}^{\ra} a_s = a_{s_1} \dotsb a_{s_n}$.

	\item
		For an interval partition $\pi$ and blocks $V, W \in \pi$, we write $V \leq W$ if $k \leq l$ for any $k \in V$ and $l \in W$.  The set $\pi$ is a totally ordered set under the relation $\leq$.

	\end{enumerate}

\end{note}

\begin{defn}\label{def_cumulant}
	Let us define $B$-valued multilinear functions $K^E_\pi : A^n \ra B $ $(\pi \in I(k), k \in \N)$ inductively by the following three relations:
	\begin{enumerate}
		\item For $k \in \N$ and $y_1, \dotsc, y_k \in M$,
		$
		E[y_1 \dotsb y_k]  = \sum_{\pi \in I(k)} K_\pi^E [y_1, \dotsc, y_k].
		$
		\item For $k \in \N$ and $\pi \in I(k)$,
		$
		K_{\pi}^E [y_1, \dotsc, y_k] = %
		\prod_{V \in \pi}^{\ra}K_{(V)}^E [y_1, \dotsc, y_k].
		$
		\item  For $\pi \in I(k)$ and $V \in \pi$, $K_{(V)}^E [y_1, \dotsc, y_k] := K_{{\bf1}_m}^E [y_{j_1}, \dotsc, y_{j_m}]$ where $V= \{j_1 < j_2 < \cdots < j_m\}$.
	\end{enumerate}
We call them \emph{Boolean cumulants} with respect to $E$. We write $K_n^E = K^E_{{\bf1}_n}$ for $n \in \N$.

\end{defn}
\begin{prop}\label{moments-cumulants formula}

For $\pi \in I(k),$ $y_1, \dotsc, y_k$ and $k \in \N$,  set
$
E^{\pi}[y_1, \dotsc, y_k] := \prod_{V \in \pi}^{\ra} E[ \prod_{j \in V}^{\ra}y_j].
$
Then for $\pi \in P(k)$, $y_1, \dotsc, y_k \in M$ and $k \in \N$,
\begin{align*}
	E^{\pi}[y_1, \dotsc, y_k] =%
	 \sum_{\substack{\sigma \in I(k) \\ \sigma \leq \pi}}K_{\sigma}^E [y_1, \dotsc, y_k].
\end{align*}
Hence we have
$K_{\pi}^{E} [y_1, \dotsc, y_k]  %
= \sum_{\substack{\sigma \in I(k) \\ \sigma \leq \pi}}E^{\sigma}[y_1, \dotsc, y_k] \mu_{I(k)}(\sigma, \pi)$.
\end{prop}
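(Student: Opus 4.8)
The plan is to establish the first identity for interval partitions by a block-by-block expansion, and then to obtain the cumulant formula purely formally by Möbius inversion over the finite poset $I(k)$. Note that $E^\pi$ is only meaningful when the blocks of $\pi$ are totally ordered, i.e.\ for $\pi \in I(k)$; this is the only case needed, since the Möbius inversion uses the values of $E^{\bullet}$ and $K^E_{\bullet}$ on $I(k)$ alone. I therefore treat $\pi \in I(k)$ throughout. First I would prove
\[
E^\pi[y_1,\dots,y_k] = \sum_{\substack{\sigma \in I(k)\\ \sigma \leq \pi}} K^E_\sigma[y_1,\dots,y_k], \qquad \pi \in I(k),
\]
and then read off the second formula by inverting it.

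To prove the first identity, write the blocks of $\pi$ in increasing order as $V_1 < V_2 < \dots < V_p$; since $\pi$ is an interval partition, each $V_i$ is a set of consecutive integers and all of $V_i$ precedes all of $V_{i+1}$. By definition $E^\pi = \prod_{i=1}^p E\!\left[\prod_{j \in V_i}^{\ra} y_j\right]$. Applying relation (1) of Definition~\ref{def_cumulant} inside each block gives $E\!\left[\prod_{j \in V_i}^{\ra} y_j\right] = \sum_{\tau_i} K^E_{\tau_i}[y_1,\dots,y_k]$, the sum running over the interval partitions $\tau_i$ of the interval $V_i$. Substituting and expanding the ordered product of these finite sums, keeping the outer index $i$ in increasing order, yields $E^\pi = \sum \prod_{i=1}^p K^E_{\tau_i}$, the sum taken over tuples $(\tau_1,\dots,\tau_p)$ with $\tau_i$ an interval partition of $V_i$.

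It remains to identify this with the right-hand side. Because $\pi$ is an interval partition, the assignment $(\tau_1,\dots,\tau_p) \mapsto \sigma := \tau_1 \cup \dots \cup \tau_p$ is a bijection onto $\{\sigma \in I(k) : \sigma \leq \pi\}$: refining $\pi$ into intervals is exactly an independent choice of an interval refinement of each block. Moreover, since $V_1 < \dots < V_p$, the total order on the blocks of $\sigma$ groups them as those in $V_1$, then those in $V_2$, and so on, so the multiplicativity relation (2) of Definition~\ref{def_cumulant} factors as $K^E_\sigma = \prod_{i=1}^p \prod_{W \in \tau_i}^{\ra} K^E_{(W)} = \prod_{i=1}^p K^E_{\tau_i}$, all products in the induced order. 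This matches the ordered products on both sides and gives the first identity. Finally, viewing it as the statement that $\pi \mapsto E^\pi$ is the $\zeta$-transform of $\pi \mapsto K^E_\pi$ over $I(k)$, I invert using $\mu_{I(k)}$: from $E^\pi = \sum_{\sigma \leq \pi} K^E_\sigma$ and the defining relations \eqref{Moebious taking an interval} I obtain $K^E_\pi = \sum_{\sigma \leq \pi} E^\sigma \, \mu_{I(k)}(\sigma,\pi)$, the second displayed formula. The one genuinely delicate point is the order-bookkeeping in the factorization step: since the cumulants take values in the noncommutative algebra $B$, factors cannot be permuted, and one must check that expanding the product of per-block sums reproduces each $K^E_\sigma$ with its factors in the correct order — which is guaranteed precisely by $\pi$ being an interval partition.
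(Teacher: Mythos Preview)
Your proof is correct and is essentially the same argument the paper has in mind: the paper records only ``The proof is a straight induction on $|\pi|$,'' and your block-by-block expansion is exactly what that induction unwinds to. Your version is more explicit about the order-bookkeeping and the bijection between interval refinements of $\pi$ and tuples $(\tau_1,\dots,\tau_p)$, which is the only place noncommutativity requires care, and you handle it correctly.
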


\begin{proof}
The proof is a straight induction on $|\pi|$.
\end{proof}

The conditional Boolean independence  can be characterized by vanishing of mixed cumulants.

\begin{thm}\label{thm_indep} Let $(x_j)_{j \in J}$ be a family of self-adjoint elements in $A$.
	Then $(x_j)_{j \in J}$ are Boolean independent identically distributed with respect to $E$ if and only if
	\begin{align*}
	E[b_0x_{j_1}b_1x_{j_2}b_2 \dotsb x_{j_k}b_k] = %
	\sum_{\substack{\pi \in I(k) \\ \pi \leq \ker\bfj}}K_\pi^E [b_0 x_1 b_1, x_1b_2, \dotsc, x_1b_k]
	\end{align*}
	for any $b_1, \dotsb, b_k \in B \cup \{ 1_A \}$,  $\bfj \in J^k, k \in \N$.
\end{thm}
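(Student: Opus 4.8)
The plan is to prove the equivalence of the two characterizations of conditional Boolean independence and identical distribution, namely the multiplicative-factorization description (Lemma~\ref{rewrite boolean indep}) and the cumulant description in the statement of Theorem~\ref{thm_indep}. First I would establish the direction asserting that Boolean i.i.d.\ implies the cumulant formula. By Lemma~\ref{rewrite boolean indep}, the Boolean i.i.d.\ hypothesis gives
\begin{align*}
E[b_0x_{j_1}b_1 \dotsb x_{j_k}b_k] = b_0 \cdot \prod_{V \in \inf_I \ker \bfj}^{\ra} E\Bigl[\,\prod_{l \in V}^{\ra} x_{j_l}b_l\Bigr].
\end{align*}
The key observation is that the right-hand side is exactly $E^{\inf_I \ker\bfj}[\,\cdot\,]$ applied to the arguments $b_0x_1b_1, x_1b_2, \dots, x_1b_k$ (using that the variables agree within each block of $\inf_I \ker\bfj$, so each $x_{j_l}$ may be replaced by $x_1$ after factoring out $b_0$). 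I would then invoke Proposition~\ref{moments-cumulants formula}, which expresses $E^\pi$ as a sum of cumulants $K^E_\sigma$ over $\sigma \leq \pi$ in $I(k)$. Setting $\pi = \inf_I \ker\bfj$ and noting that $\sigma \leq \inf_I \ker\bfj$ is equivalent to $\sigma \leq \ker\bfj$ for interval partitions $\sigma$ (since $\inf_I \ker\bfj$ is by definition the largest interval partition below $\ker\bfj$), the sum over $\sigma \le \inf_I\ker\bfj$ becomes precisely the sum over $\sigma \in I(k)$ with $\sigma \le \ker\bfj$. This yields the desired cumulant expansion.

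For the converse, I would show that the cumulant formula forces the Boolean i.i.d.\ property, again through Lemma~\ref{rewrite boolean indep}. Assuming the cumulant identity, I would reverse the argument: the sum $\sum_{\sigma \in I(k),\, \sigma \le \ker\bfj} K^E_\sigma$ collapses, via the equivalence $\sigma \le \ker\bfj \Leftrightarrow \sigma \le \inf_I \ker\bfj$ and Proposition~\ref{moments-cumulants formula}, back to $E^{\inf_I \ker\bfj}[\,b_0x_1b_1, \dots, x_1b_k]$, which equals the factorized right-hand side of Lemma~\ref{rewrite boolean indep}. Since Lemma~\ref{rewrite boolean indep} is an equivalence, recovering its right-hand side for all choices of $\bfj$ and all $b_0, \dots, b_k \in B \cup \{1_A\}$ is exactly the statement that $(x_j)_{j \in J}$ are Boolean i.i.d.\ over $(E,B)$. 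The only subtlety here is that the cumulant $K^E_\sigma$ is evaluated on the \emph{constant} sequence of arguments $b_0x_1b_1, x_1b_2, \dots, x_1b_k$ rather than on the original $x_{j_l}$; I must check that replacing each $x_{j_l}$ by $x_1$ is legitimate, which holds precisely because within any block $V$ of $\sigma \le \inf_I\ker\bfj$ all indices $j_l$ coincide, so the cumulant only ever sees equal variables.

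The main obstacle I anticipate is the bookkeeping around the identification $\{\sigma \in I(k) : \sigma \le \ker\bfj\} = \{\sigma \in I(k) : \sigma \le \inf_I \ker\bfj\}$ together with the substitution of the placeholder variable $x_1$. Concretely, one must verify that $\inf_I\ker\bfj$ (the maximal interval partition below $\ker\bfj$) genuinely captures all interval refinements of $\ker\bfj$, i.e.\ that an interval partition lies below $\ker\bfj$ if and only if it lies below its interval part; this is immediate from the maximality defining $\inf_I$ but deserves an explicit sentence. Once this combinatorial fact is pinned down, both directions follow by combining Lemma~\ref{rewrite boolean indep} with Proposition~\ref{moments-cumulants formula} in a purely formal way, so the remaining steps are routine. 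I would therefore structure the written proof as: (i) reduce to Lemma~\ref{rewrite boolean indep}; (ii) record the index-set identity; (iii) apply Proposition~\ref{moments-cumulants formula} to translate between the factorized form and the cumulant sum; (iv) conclude the equivalence.
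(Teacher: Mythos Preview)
Your proposal is correct and follows essentially the same route as the paper: reduce to Lemma~\ref{rewrite boolean indep}, record the identity $\{\pi \in I(k) : \pi \le \ker\bfj\} = \{\pi \in I(k) : \pi \le \inf_I\ker\bfj\}$, and apply Proposition~\ref{moments-cumulants formula} to pass between the factorized expression and the cumulant sum. The paper's proof compresses both implications into a single chain of equalities and a final appeal to the equivalence in Lemma~\ref{rewrite boolean indep}, exactly as you outline in steps (i)--(iv).
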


\begin{proof} We have
   	\[
	b_0 \cdot \prod_{V \in \inf_I \ker \bfj}^{\ra} E[ \prod_{l \in V}^{\ra} x_1b_l]%
	= b_0 E^{\inf_I \ker \bfj}[x_1b_1, x_2b_2, \dots, x_1b_k]%
	=\sum_{\substack{\pi \in I(k) \\ \pi \leq \inf_I \ker\bfj}}K_\pi^E [b_0 x_1 b_1, x_1b_2, \dotsc, x_1b_k].
	\]
    We see that $\{ \pi \in I(k) \mid \pi \leq \inf_I \ker\bfj \} = \{ \pi \in I(k) \mid \pi \leq  \ker\bfj \}$.
    Lemma~\ref{rewrite boolean indep} completes the proof.
\end{proof}

\begin{defn}\label{ber}
Let $x$ be a self-adjoint element in $(M, E)$.
   \begin{enumerate}
   \item The element $x$ is said to  \emph{have centered Bernoulli distribution} if
   for any $b_1, \dotsc, b_{k-1} \in N \cup \{1_M\}$ and $k \in \N$,
   \begin{align*}
      E[xb_1xb_2 \dotsb b_{k-1}x] = \sum_{\pi \in I_2(k)}K_\pi^E [xb_1, xb_2, \dotsc, x].
   \end{align*}
  We see immediately that  if $N=\C1_M$, $x$ has centered Bernoulli distribution if and only if that is $(\delta_{\sigma} + \delta_{-\sigma})/2 $ where $\sigma:=\sqrt{K^E_2[x,x]}$.

   \item The element $x$ is said to  \emph{have shifted Bernoulli distribution} if
   for any $b_1, \dotsc, b_{k-1} \in N \cup \{1_M\}$ and $k \in \N$,
   \begin{align*}
   E[xb_1xb_2 \dotsb b_{k-1}x] = \sum_{\pi \in I_b(k)}K_\pi^E [xb_1, xb_2, \dotsc, x].
   \end{align*}
	We check easily that   if $N=\C1_N$, $x$ has shifted Bernoulli distribution with $K^E_1[x]= \mu$ and $K^E_2[x, x]=\sigma^2$ if and only if its distribution is
   \begin{align*}
    \mathrm{Ber}(\mu,\sigma^2):=\frac{\alpha \delta_{\alpha} + \beta \delta_{-\beta}}{\alpha + \beta},
   \end{align*}
   where $\alpha, \beta >0$, and $(\alpha,-\beta)$ is the pair of distinct  solutions of the  quadratic equation $Z^2-\mu Z - \sigma^2 = 0$ in the variable $Z$. Its $n$-th moment is given by
   $
   E[x^n]= \left( \alpha^{n+1}- (-\beta)^{n+1} \right) / (\alpha + \beta).
   $
  \end{enumerate}
\end{defn}

 A Bernoulli distribution is the central limit distribution of Boolean i.i.d.\,self-adjoint elements (see \cite{speicher1997boolean}).
 Hence the Bernoulli distribution is the Boolean analogue of  Gaussian distribution.

\section{Boolean analogues of easy quantum groups}

\subsection{Boolean quantum semigroups}\hfill

In this section we introduce the notions of Boolean quantum semigroups on categories of interval partitions.

\begin{defn}\label{blockwise}
	For   a category $D$ of interval partitions, consider the following three conditions.
	\begin{enumerate} \renewcommand{\labelenumi}{(D\arabic{enumi})}
		\item \label{D1} 		It is  \emph{block-stable}, which  means that for any $k \in \N$,
           \begin{align*}
			D(k) = \{ \pi \in D(k) \mid \{V\} \in D(|V|),\ V \in \pi \}.
			   \end{align*}
		\item \label{D2} It is closed under taking an interval.
		\item \label{D3} It \emph{has enough patitions}, which means that for $l \in \N $, it holds that $ D(l) \neq \emptyset $ if there is  $k \in L_D$ with
		$D(k+l) \neq \emptyset $.

	\end{enumerate}
	We say that $D$ is \emph{blockwise} if it satisfies  (D1)--(D3).
\end{defn}

\begin{example}
	Categories $I_s, I_o,I_h, I_b$ of interval paritions  are blockwise.

\end{example}

\begin{defn}\label{boo_qg}
	Let $D$ be a blockwise category of partitions.
	Denote by $\mathcal{A}[D;n]$ the non-unital $\ast$-algebra generated by self-adjoint elements $u_{ij}^{(n)} (1 \leq i,j \leq n)$ and an orthogonal projection $p^{(n)}$ with the following relations: for any $k \in L_D$ and $\bfi, \bfj \in [n]^k$,
	\begin{align*}
	\sum_{i =1}^n u_{i j_1}^{(n)} \dotsb u_{i j_k}^{(n)} p^{(n)} %
	&= \begin{cases} p^{(n)}, & j_1=\dotsb=j_k, \\ 0, & \text{otherwise,}\end{cases} \\
	\sum_{j =1}^n u_{i_1 j}^{(n)} \dotsb u_{i_k j}^{(n)} p^{(n)} %
	&= \begin{cases} p^{(n)}, & i_1=\dotsb=i_k, \\ 0, & \text{otherwise.}\end{cases} \\
	\end{align*}
	If there is no confusion, we omit the index $(n)$ and simply write $u_{i,j}$ and $p$.
	There is a linear map $\Delta \colon {\mathrm A}[D;n] \ra {\mathrm A}[D;n] \otimes {\mathrm A}[D;n]$ with
	\begin{align*}
	\Delta(p) :=p \otimes p, \ \
		\Delta(u_{ij}) := \sum_{k = 1}^n u_{ik} \otimes u_{kj} \  ( i,  j = 1, 2, \dots, n).
	\end{align*}
	It is easy to check that $\Delta$ is a coproduct, that is, the following  holds:
	\begin{align*}
	(\mathrm{id} \otimes \Delta) \Delta = (\Delta \otimes \mathrm{id}) \Delta.
	\end{align*}
	Set a linear linear map $\epsilon \colon {\mathrm A}[D;n] \ra \C$ by
$	\epsilon(u_{ij}) = \delta_{ij}, \ \epsilon(p)= 1.
$
	We have $(id \otimes \epsilon) \Delta = id = (\epsilon \otimes id) \Delta.$
	Hence  ${\mathrm A}[D;n]$ is a coalgebra with the coproduct $\Delta$ and the counit $\epsilon$.
	We define a sequence of unital $*$-algebra equipped with coproduct by
	\begin{align*}
		\mathcal{A}_p[D;n] := p{A}[D;n]p
	\end{align*}
	We call $(\mathcal{A}_p[D;n])_{n \in \N}$ \emph{the Boolean pre-quantum semigroups on $D$}.
\end{defn}

\begin{defn}
We call the sequences of pairs $((Beq_x(n), \Delta))_{n \in \N}$ defined by the following   \emph{the Boolean quantum semigroups on $D$} for %
 $I_x$ $(x = s, o, h, b)$.

  \begin{enumerate}
  \item   For $a \in \mathcal{A}[I_x;n]$, we set
     \begin{align*}
       || a || :=   \sup \{  ||\pi(a)||
       \mid   \pi \text{ is a}*\text{-representation of } \mathcal{A}[I_x; n],\ \pi(p)=1, \ ||(\pi(u_{ij}))_{i,j}||_n \leq 1 \},
     \end{align*}
     where $|| \cdot ||_n $ is the operator norm on $B(H) \otimes M_n(\C)$ for each $*$-representation $(\pi, H)$.
     Since there is the  $*$-representation $\pi \colon A[I;n] \ra C(S_n) \subset B(H)$ defined by $\pi(u_{ij})(\sigma) = \delta(\sigma(i), j)$ $( \sigma \in S_n )$,
     we obtain $ 0 \leq || a ||$ $< \infty $.
     Hence $|| \cdot ||$ is a C$^*$-seminorm on $\mathcal{A}[I_x; n] $.
  \item    Let $B$ be the C$^*$-completion of
      $\mathcal{A}[I_x; n] / \langle  || \cdot || = 0 \rangle$.
     We define \emph{the Boolean quantum semigroup on $I_x$ of $n$} by
     \begin{align*}
       Beq_x(n) := p B p.
     \end{align*}
\item    We denote by $\iota_n$ the unital $*$-hom  $\mathcal{A}_p[I_x;n] \ra Beq_x(n)$ which is the restriction of the $*$-hom $\mathcal{A}[I_x;n] \ra Beq_x(n)$ determined by $\iota_n(u_{ij}) = [u_{ij}] \ (i, j \in [n])$, $\iota_n(p) = [p]$.
  By abuse of notation, we use same symbols $u_{ij}$, $p$ for the generators $[u_{ij}]$, $[p]$ of $Beq_x(n)$.

\item For any $*$-representation $\pi$ of $\mathcal{A}[I_x;n]$ with $|| (\pi (u_{ij}) )_{ij} ||_n \leq 1$,
  we obtain $|| (\pi(\Delta u_{ij})_{ij} ||_n \leq 1$.
  Hence we can extend the domain of $\Delta$, %
  that is, there is a unique bounded $*$-hom %
  $\bar{\Delta} \colon Beq_x(n) \ra Beq_x(n) \otimes_\mathrm{min} Beq_s(n)$
  with $\bar{\Delta}(u_{ij}) = \sum_{s \in [n]} u_{is} \otimes u_{sj}$ and  %
  $\bar{\Delta}(p)=p \otimes p$.
  We simply denote by $\Delta$ the bounded $*$-hom $\bar{\Delta}$  if there is no confusion.
  It is easy to check that $\Delta$ is a coproduct of $Beq_x(n)$.

  \end{enumerate}

\end{defn}

\begin{lem} \label{redefine bool_eqg}
Let  the index $x$ be one of $s,o,h,b$. Then for any $k, n \in \N$ and $\pi \in I_x(k)$,  we have
  \begin{align*}
  \sum_{\substack{\bfi \in [n]^k \\ \pi \leq \ker \bfi}} u_{i_1j_1}^{(n)} \dotsb u_{i_kj_k}^{(n)} p^{(n)} %
  = \begin{cases} p^{(n)}, & \pi \leq \ker \bfj, \\ 0, & \text{otherwise},\end{cases} \
  \sum_{\substack{\bfj \in [n]^k \\ \pi \leq \ker \bfj}} u_{i_1j_1}^{(n)} \dotsb u_{i_kj_k}^{(n)} p^{(n)} %
  = \begin{cases} p^{(n)}, & \pi \leq \ker \bfi,\\ 0, & \text{otherwise}.\end{cases}
  \end{align*}
\end{lem}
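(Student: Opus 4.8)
The plan is to reduce the general identity to the two defining relations of $\mathcal{A}[I_x;n]$ (the case $\pi = {\bf 1}_k$) by processing the interval partition $\pi$ one block at a time. I treat the first identity (the sum over $\bfi$); the second is proved symmetrically, using the second defining relation (the sum over a single column index $j$) in place of the first.

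First I would record the two structural facts that make the argument run. Since each $I_x$ ($x=s,o,h,b$) is block-stable, every block $V$ of a partition $\pi \in I_x(k)$ has $|V| \in L_{I_x}$; hence the defining relation is available for the size of each block of $\pi$. Moreover $\pi \leq \ker\bfi$ says exactly that the multi-index $\bfi$ is constant on each block of $\pi$, so the constrained sum $\sum_{\pi \leq \ker\bfi}$ is a free sum over one index $i_V \in [n]$ per block $V$ of $\pi$.

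The core is an induction on the number of blocks $m$ of $\pi$. Write $\pi = \{V_1 < \dotsb < V_m\}$ as ordered intervals and let $V_m = \{a, a+1, \dotsc, k\}$ be the rightmost block. Because $\bfi$ is constant on each block, the product factors as $P_{<m}\cdot\bigl(u_{i_{V_m}j_a}\dotsb u_{i_{V_m}j_k}\bigr)$, where $P_{<m}$ collects the factors coming from $V_1, \dotsc, V_{m-1}$ and does not involve the index $i_{V_m}$. Summing first over $i_{V_m}$ and pulling $P_{<m}$ to the left, the inner sum $\sum_{i=1}^n u_{ij_a}\dotsb u_{ij_k}\,p$ is precisely the left-hand side of the first defining relation for the block-size $|V_m| \in L_{I_x}$; it equals $p$ when $j_a = \dotsb = j_k$ (that is, when $\bfj$ is constant on $V_m$) and $0$ otherwise. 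In the vanishing case the whole sum is $0$; in the surviving case we are left with $\sum P_{<m}\,p$, which is the same expression for the interval partition $\pi \setminus \{V_m\} \in I_x(a-1)$ and the truncated multi-index, so the induction hypothesis applies (the base case $m=1$ being the defining relation itself). Assembling the blockwise constancy conditions yields $p$ exactly when $\bfj$ is constant on every block of $\pi$, i.e. when $\pi \leq \ker\bfj$, and $0$ otherwise.

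The only point requiring care --- and the main, mild, obstacle --- is the bookkeeping of non-commutativity: one must keep $p$ fixed at the right end throughout, so that at each stage the defining relation can be applied to the current rightmost block, and verify that the summation index of that block occurs nowhere else in the product. Both hold because $\pi$ is an interval partition, so its blocks are contiguous and linearly ordered, and because $p$ is never moved past any generator; everything else is a routine induction.
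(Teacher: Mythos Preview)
Your proof is correct and is exactly the approach the paper takes: the paper's proof reads in its entirety ``The proof is induction on $|\pi|$,'' and your block-by-block peeling, applying the defining relation of $\mathcal{A}[I_x;n]$ to the rightmost block at each stage, is precisely that induction spelled out. Your observation that block-stability guarantees $|V|\in L_{I_x}$ for every block $V$ of $\pi$ is the right justification for why the defining relation is available at each step.
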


\begin{proof}
The proof is induction on $|\pi|$.
\end{proof}

\begin{rmk}
We denote by $P_{ij} \in B(L^2(S_n) \ (i, j \leq n)$ the generators of $C(S_n)$, where $P_{ij}(\sigma) = \delta_{i, \sigma(j)}$ $(\sigma \in S_n)$.
We see at once that there is a $*$-representation $Beq_s \ra B(L^2(S_n))$ which maps
$u_{ij}$ to $P_{ij}$ and $P$ to $1$.

In  Section~\ref{Haar states on Beq_x}, we prove that there is the other $*$-representation on $L^2(S_n)$ (see Notation~\ref{note pi s}, Propositon~\ref{repsentation pi_s relation}).
In the construction, we use $P_{ij}$ in the different way.
Let $\hat{P}_{ij}$ (resp.\ $\hat{1}$) be the image of $P_{ij}$ (resp.\ $1$) with respect to the standard inclusion $C(S_n) \hra L^2(S_n)$. The $*$-representation maps $u_{ij}$ (resp.\ $p$) to the one dimensional projection onto the closed subspace $\C \hat{P}_{ij}$ (resp.\ $\C \hat{1}$) $\subset L^2(S_n)$.
Furthermore, we show that $Beq_s$ admits the unique Haar state  and that this $*$-representation is the GNS-representation of the Haar state (see Theorem~\ref{Haar state h_s existence}).

\end{rmk}

Next we consider coactions on the $*$-algebra of noncommutative polynomials without constant terms.
\begin{defn}
Let $\mathcal{A}$ be a unital $*$-algebra equipped with a coproduct $\Delta$.
For any $*$-algebra $\mathcal{P}$, a $*$-preserving linear map $T \colon \mathcal{P} \ra \mathcal{P} \otimes \mathcal{A}$ is said to be \emph{a linear coaction on $\mathcal{P}$} if we have
\begin{align*}
		(T \otimes \mathrm{id}) \circ T = (\mathrm{id} \otimes \Delta) \circ T.
		\end{align*}

\end{defn}

\begin{note} Let $D$ be a blockwise category of interval partitions.
	\begin{enumerate}
		\item For $m,n \in \N$ with $m \geq n$, we define a $*$-hom $r_{nm} \colon \mathcal{A}[D;m]\ra \mathcal{A}[D;n]$ by
		\begin{align*}
		r_{nm}(u_{ij}^{(m)}) := \begin{cases} u_{ij}^{(n)}, & i, j \leq n, \\ \delta_{ij}1_{\mathcal{A}[D;n]}, & \text{otherwise,}\end{cases}
		\hspace{5mm}  r_{nm}(p^{(m)}) := p^{(n)}.
		\end{align*}
		\item
		Define a linear map $\Lambda_n \colon \ms P_{n}^{o} \ra \ms P_{n}^{o} \otimes \mathcal{A}_p[D;n]$ by
		\begin{align*}
		\Lambda_n(X_{j_1} \dotsb X_{j_k}) := \sum_{\bfi \in [n]^k} X_{i_1} \dotsb X_{i_k} \otimes pu_{i_1j_1} \dotsb u_{i_kj_k}p.		\end{align*}
		We define a linear map $\Psi_n \colon \ms P_{\infty}^o \ra \ms P_{\infty}^o \otimes \mathcal{A}_p[D;n]$ by
		\begin{align*}
		\Psi_n (f) := (\mathrm{id} \otimes  r_{nm} )\circ \Lambda_m (f),
		\end{align*}
		for $f \in \ms P_{m}^{o}\subset \ms P_{\infty}^o$.
		Then  by a direct calculation,  each $\Psi_n$ is a linear coaction of $\mathcal{A}_p[D;n]$ on $\ms P_{\infty}^o$.

		\item
		We define a coaction $\Phi_n$ of $Beq_x(n)$ on $\ms P_{\infty}^o$ by
		\begin{align*}
			\Phi_n := (\mathrm{id} \otimes \iota_n ) \circ \Psi_n.
		\end{align*}
	\end{enumerate}

\end{note}

\begin{defn}

	Let $(M, \varphi)$ be  a pair of a von Neumann algebra and a state.
For any sequence $(x_j)_{j \in \N}$ of self-adjoint elements in $M$, 	we say that its joint distribution is
 \emph{$\mathcal{A}_p[D]$-invariant} if it is invariant under the coactions of $(\mathcal{A}[D;n])_{n \in \N}$, that is,
for any $n \in \N$,
\begin{align*}
(\varphi \circ \mathrm{ev}_x \otimes \mathrm{id} )\circ \Psi_n = \varphi \circ \mathrm{ev}_x \otimes p.
\end{align*}
We also say that it is $Beq_x$-invariant if  for any $n \in \N$,
\begin{align*}
(\varphi \circ \mathrm{ev}_x \otimes \mathrm{id} )\circ \Phi_n = \varphi \circ \mathrm{ev}_x \otimes p.
\end{align*}

\end{defn}

It is clear that $\mathcal{A}[I_x]$-invariance implies $Beq_x$-invariance.

\subsection{Relations with Liu's Boolean quantum semigroups}\hfill

		We introduce Liu's boolean permutation quantum semigroup defined in \cite{liu2015noncommutative}.
		 Let $B_s(n)$ be  the universal unital C$^*$-algebra generated by projections $\mathbf{P}, U_{i,j} (i, j = 1, \dots, n)$ and  relations such that
		\begin{align*}
		\sum_{i = 1}^n U_{ij} \mathbf{P} &= \mathbf{P},\ j = 1, \dots, n,\\
		U_{i_1 j}U_{i_2 j} &= 0,
		\text{ if } i_1 \neq  i_2, \text{ for any } j = 1, \dots, n,\\
		U_{i j_1}U_{i j_2} &= 0,
		\text{ if } j_1 \neq j_2, \text{ for any } i = 1, \dots, n.
		\end{align*}
	By \cite[Lemma~3.3]{liu2015noncommutative}, we have
	\begin{align*}
		\sum_{j = 1}^n U_{ij} \mathbf{P} = \mathbf{P},\ i = 1, \dots, n.
	\end{align*}
	We see that $B_s(n)$ admits a coproduct $\Delta$ determined by $	\Delta(\mathbf{P}) :=\mathbf{P} \otimes \mathbf{P},
			\Delta(U_{ij}) := \sum_{k = 1}^n U_{ik} \otimes U_{kj} \  ( i,  j = 1, 2, \dots, n).
	$
	Then let us introduce Liu's boolean permutation quantum semigroup.
	\begin{defn}
	We set $\mc{B}_s(n) = \mathbf{P}B_s(n)\mathbf{P}$, and we call $(\mc{B}_s(n), \Delta)$ \emph{the boolean permutation quantum semigroup} of $n$.
	\end{defn}
	We can check that each $\mc{B}_s(n)$ is a quantum semigroup in the sense of  Soltan \cite{soltan2009quantum}.
	\begin{lem}\label{hom to B_s}
	There is a $*$-hom $\alpha \colon Beq_s(n) \ra \mc{B}_s(n)$ with $\alpha(u_{ij}) = U_{ij} $ $(i, j \leq n)$ and $\alpha(p) = \mathbf{P}$.
	\end{lem}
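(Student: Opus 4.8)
The plan is to obtain $\alpha$ by first constructing an algebraic $*$-homomorphism at the level of the full generating algebras and then transporting it to the corners. First I would build a $*$-homomorphism $\tilde\alpha\colon\mathcal{A}[I_s;n]\to B_s(n)$ sending $u_{ij}\mapsto U_{ij}$ and $p\mapsto\mathbf{P}$. Since $\mathcal{A}[I_s;n]$ is presented by generators and the relations of Definition~\ref{boo_qg}, this amounts to checking that the images $U_{ij},\mathbf{P}$ (self-adjoint, since they are projections) satisfy those relations in $B_s(n)$. The key observation is that for a fixed row $i$ Liu's relation $U_{ij_1}U_{ij_2}=0$ $(j_1\neq j_2)$ makes $\{U_{ij}\}_j$ mutually orthogonal projections, so a product $U_{ij_1}\dotsb U_{ij_k}$ collapses to $U_{ij_1}$ when $j_1=\dotsb=j_k$ and vanishes otherwise; combined with $\sum_i U_{ij}\mathbf{P}=\mathbf{P}$ this yields exactly the first relation of Definition~\ref{boo_qg} for every $k\in L_{I_s}=\N$, and symmetrically the column relation follows from $U_{i_1j}U_{i_2j}=0$ together with Liu's derived identity $\sum_j U_{ij}\mathbf{P}=\mathbf{P}$.

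Next I would promote $\tilde\alpha$ to the C$^*$-completion. Here a useful point is that the matrix $(U_{ij})_{ij}$ is automatically a contraction in $M_n(B_s(n))$: for $\xi=(\xi_j)$ one computes, using that $U_{ij}$ is a self-adjoint projection and the two orthogonality relations, that $\langle (U_{ij})^*(U_{ij})\,\xi,\xi\rangle=\sum_i\bigl\lVert\sum_j U_{ij}\xi_j\bigr\rVert^2=\sum_j\bigl\lVert(\sum_i U_{ij})\xi_j\bigr\rVert^2\leq\sum_j\lVert\xi_j\rVert^2$, because each $\sum_i U_{ij}$ is a projection. Consequently, for any $*$-representation $\rho$ of $B_s(n)$ the composite $\rho\circ\tilde\alpha$ respects the norm bound $\lVert(\,\cdot\,)_{ij}\rVert_n\leq 1$ that enters the definition of $\lVert\cdot\rVert$ on $\mathcal{A}[I_s;n]$. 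This is what I would use to show that $\tilde\alpha$ is continuous for the C$^*$-seminorm defining $Beq_s(n)$, so that it factors through the completion; restricting the factored map to the corner $pBp$ and observing $\tilde\alpha(p)=\mathbf{P}$ lands the image in $\mathbf{P}B_s(n)\mathbf{P}=\mc{B}_s(n)$, producing $\alpha$ with the asserted values on generators.

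The step I expect to be the main obstacle is precisely the passage to the completion, namely reconciling the two normalizations of the distinguished projection. The seminorm on $\mathcal{A}[I_s;n]$ is a supremum over representations $\pi$ with $\pi(p)=1$, whereas a representation $\rho$ of $B_s(n)$ only gives $\rho(\mathbf{P})$ a proper projection; the operators $U_{ij}$ are not fixed by compression with $\mathbf{P}$, so one cannot naively read off the estimate $\lVert\tilde\alpha(a)\rVert\leq\lVert a\rVert$ for general $a$. The care must go into doing the comparison only after compressing to the corner, i.e.\ working with $\mathcal{A}_p[I_s;n]=p\mathcal{A}[I_s;n]p\to\mathbf{P}B_s(n)\mathbf{P}$ and verifying that the $\mathbf{P}$-compression is compatible with the relations used above; this is the delicate bookkeeping that replaces the routine "universal property" argument one would have in the unital case, and it is where the non-unital nature of the Boolean setting genuinely intervenes.
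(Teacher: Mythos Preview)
Your first step, verifying that $U_{ij}$ and $\mathbf{P}$ satisfy the defining relations of $\mathcal{A}[I_s;n]$, is exactly what the paper does --- in fact it is \emph{all} the paper does. The paper's proof consists of the two displayed identities
\[
\sum_{i=1}^n U_{ij_1}\cdots U_{ij_k}\mathbf{P} = \delta_{j_1=\cdots=j_k}\,\mathbf{P}, \qquad
\sum_{j=1}^n U_{i_1j}\cdots U_{i_kj}\mathbf{P} = \delta_{i_1=\cdots=i_k}\,\mathbf{P},
\]
and nothing more. Your derivation of these from Liu's orthogonality relations together with $\sum_i U_{ij}\mathbf{P}=\mathbf{P}$ and $\sum_j U_{ij}\mathbf{P}=\mathbf{P}$ is correct and matches the paper's intent.

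Where you go beyond the paper is in treating the passage to the C$^*$-completion, and here your caution is warranted: the paper simply does not address it. Your computation that $\|(U_{ij})_{ij}\|_n\leq 1$ is correct (indeed $(U^*U)_{jk}=\delta_{jk}\sum_i U_{ij}$, a diagonal matrix of projections). However, the inference you draw from it has a gap. The seminorm defining $Beq_s(n)$ is a supremum over representations $\pi$ with $\pi(p)=1$, and the composite $\rho\circ\tilde\alpha$ for a representation $\rho$ of $B_s(n)$ sends $p$ to the (generally proper) projection $\rho(\mathbf{P})$, so it is \emph{not} one of the $\pi$'s contributing to that supremum. Hence knowing $\|(\rho(U_{ij}))\|_n\leq 1$ does not by itself bound $\|\tilde\alpha(a)\|_{B_s(n)}$ by $\|a\|$. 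You recognise exactly this in your final paragraph, but the middle paragraph reads as if the norm bound already closes the loop, which it does not.

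In short: the algebraic half of your proposal coincides with the paper's proof; the analytic half (continuity through the completion) is a point the paper leaves implicit, and the specific mechanism you offer for it does not work as stated. Your instinct to argue directly at the level of the corners $\mathcal{A}_p[I_s;n]\to\mc{B}_s(n)$ is the right one, but note that compressing a representation of $B_s(n)$ by $\rho(\mathbf{P})$ does not yield a representation of $\mathcal{A}[I_s;n]$ (multiplicativity fails across the inserted $\mathbf{P}$), so one still needs an additional idea to produce, from a faithful representation of $\mc{B}_s(n)$, a representation of $\mathcal{A}[I_s;n]$ satisfying $\pi(p)=1$.
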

		\begin{proof}
		We see that  for any $k \in \N$ and $\bfi, \bfj \in [n]^k$,
			\begin{align*}
			\sum_{i =1}^n U_{i j_1} \dotsb U_{i j_k} \mathbf{P} %
			= \begin{cases} \mathbf{P}, & j_1=\dotsb=j_k, \\ 0, & \text{otherwise,}\end{cases} \hspace{5mm}
			\sum_{j =1}^n U_{i_1 j} \dotsb u_{i_k j} \mathbf{P} %
			= \begin{cases} \mathbf{P}, & i_1=\dotsb=i_k, \\ 0, & \text{otherwise.}\end{cases} \\
			\end{align*}
		This completes the proof.
		\end{proof}

	\begin{note}
	We set a linear map $L_n \colon \ms P_{n}^{o} \ra \ms P_{n}^{o} \otimes \mc{B}_s(n)$ by
	$L_n(X_{j_1} \dotsb X_{j_k}) := \sum_{\bfi \in [n]^k} X_{i_1} \dotsb X_{i_k}$ $\otimes \mathbf{P}U_{i_1j_1} \dotsb U_{i_kj_k} \mathbf{P}$.
			We set a linear map $\mathbf{L}_n \colon \ms P_{\infty}^o \ra \ms P_{\infty}^o \otimes \mc{B}_s(n)$ by
			$ 
			\Psi_n (f) := (\mathrm{id} \otimes  r_{nm} )\circ \Lambda_m (f),
			$ 
			for $f \in \ms P_{m}^{o}\subset \ms P_{\infty}^o$.
			Then  by a direct calculation,  each $\mathbf{L}_n$ is a linear coaction of $\mc{B}_s(n)$ on $\ms P_{\infty}^o$.

	\end{note}

Let $(M, \varphi)$ be a von Neumann algebra and a nondegenerate normal state and $(x_j)_{j \in \N} $ be a sequence of self-adjoint elements in $M$.
We may assume $M \subset B(H)$,   and $\varphi$ is implemented by $\Omega \in H$, which is a cyclic vector for $M$. %
We suppose  that $\mathrm{ev}_x(\ms P_{\infty}^o)$ is $\sigma$-weakly dense in $M$, where  $\mathrm{ev}_x$ is the evaluation map.
	\begin{note}
		We say that $(x_j)_{j \in \N}$  is $\mc{B}_s$-invariant if  for any $n \in \N$,
			$(\varphi \circ \mathrm{ev}_x \otimes \mathrm{id} )\circ \mathbf{L}_n = \varphi \circ \mathrm{ev}_x \otimes \mathbf{P}.
			$
		\end{note}

	\begin{lem}\label{implies B_s-invariant}
	Assume $(x_j)_{j \in \N}$ is $\mathcal{A}_p[I_x]$-invariant or $Beq_x$-invariant for one of
	$x=s,o,h,b$. Then it is $\mc{B}_s$-invariant.
	\end{lem}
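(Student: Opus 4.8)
The plan is to reduce every case to the single category $I_s$ and then invoke the $*$-homomorphism $\alpha$ of Lemma~\ref{hom to B_s}. Since $\mathcal{A}_p[I_x]$-invariance implies $Beq_x$-invariance (the observation recorded right after the definition of invariance), it is enough to treat the hypothesis that $(x_j)_{j\in\N}$ is $Beq_x$-invariant for a fixed $x\in\{s,o,h,b\}$. (If one prefers to handle the $\mathcal{A}_p[I_x]$-invariant case directly, the algebraic homomorphism $\mathcal{A}_p[I_x;n]\to\mc{B}_s(n)$ sending $u_{ij}\mapsto U_{ij}$, $p\mapsto\mathbf{P}$ exists because the relations checked in the proof of Lemma~\ref{hom to B_s} hold for \emph{all} $k$, and the argument below applies verbatim on the level of $\ast$-algebras.)

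First I would compare the categories. For every $x$ one has $L_{I_x}\subseteq\N=L_{I_s}$, so the defining relations of $\mathcal{A}[I_x;n]$, indexed by $k\in L_{I_x}$, form a subfamily of those of $\mathcal{A}[I_s;n]$. Hence $u_{ij}^{(n)}\mapsto u_{ij}^{(n)}$, $p^{(n)}\mapsto p^{(n)}$ extends to a surjective $\ast$-homomorphism $q_x\colon\mathcal{A}[I_x;n]\twoheadrightarrow\mathcal{A}[I_s;n]$. To descend to the C$^*$-completions I would compare the defining seminorms: if $\rho$ is any $\ast$-representation of $\mathcal{A}[I_s;n]$ with $\rho(p)=1$ and $\|(\rho(u_{ij}))_{ij}\|_n\leq 1$, then $\rho\circ q_x$ is a representation of $\mathcal{A}[I_x;n]$ with the same two properties, hence admissible in the supremum defining $\|\cdot\|$; this gives $\|q_x(a)\|\leq\|a\|$, so $q_x$ descends to a unital $\ast$-homomorphism $\bar q_x\colon Beq_x(n)\to Beq_s(n)$ with $\bar q_x(u_{ij})=u_{ij}$, $\bar q_x(p)=p$ (it respects the compressions $pBp$ since $\bar q_x(p)=p$).

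Next I would verify that $\bar q_x$ and $\alpha$ intertwine the relevant coactions. Writing $\Phi_n^x$ for the coaction of $Beq_x(n)$, a comparison of the defining formulas on a monomial $X_{j_1}\cdots X_{j_k}$ with indices $\leq n$ yields
\begin{align*}
(\mathrm{id}\otimes\bar q_x)\circ\Phi_n^x=\Phi_n^s,\qquad (\mathrm{id}\otimes\alpha)\circ\Phi_n^s=\mathbf{L}_n,
\end{align*}
the first because both sides produce $\sum_{\bfi}X_{i_1}\cdots X_{i_k}\otimes p\,u_{i_1j_1}\cdots u_{i_kj_k}\,p$, and the second because $\alpha$ carries this to $\sum_{\bfi}X_{i_1}\cdots X_{i_k}\otimes\mathbf{P}\,U_{i_1j_1}\cdots U_{i_kj_k}\,\mathbf{P}=\mathbf{L}_n(X_{j_1}\cdots X_{j_k})$; for larger indices one uses that these homomorphisms commute with the reduction maps $r_{nm}$ built into the definitions. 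Applying $\bar q_x$ in the second leg of the $Beq_x$-invariance identity $(\varphi\circ\mathrm{ev}_x\otimes\mathrm{id})\circ\Phi_n^x=\varphi\circ\mathrm{ev}_x\otimes p$ gives $Beq_s$-invariance, and applying $\alpha$ then gives, since $\alpha(p)=\mathbf{P}$,
\begin{align*}
(\varphi\circ\mathrm{ev}_x\otimes\mathrm{id})\circ\mathbf{L}_n=(\varphi\circ\mathrm{ev}_x\otimes\alpha)\circ\Phi_n^s=\varphi\circ\mathrm{ev}_x\otimes\mathbf{P},
\end{align*}
which is precisely $\mc{B}_s$-invariance.

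The step I expect to be the main obstacle is the passage to the C$^*$-completions. One cannot construct a $\ast$-homomorphism $Beq_x(n)\to\mc{B}_s(n)$ directly by $u\mapsto U$, $p\mapsto\mathbf{P}$ and verifying boundedness, because $\mathbf{P}$ is only a projection in $B_s(n)$, not a unit, so representations of $\mathcal{A}[I_x;n]$ factoring through $B_s(n)$ violate the normalization $\pi(p)=1$ demanded by the seminorm. Routing through $Beq_s(n)$, where the generator $p$ stays the identity, circumvents this difficulty, and the terminal map $Beq_s(n)\to\mc{B}_s(n)$ is exactly what Lemma~\ref{hom to B_s} supplies. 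What remains is bookkeeping: matching the coactions through the index-reduction maps $r_{nm}$, which I would settle by checking the two displayed identities on monomials all of whose indices are at most $n$.
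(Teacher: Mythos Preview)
Your proof is correct and follows the same route the paper intends—namely, pushing the invariance identity through the $\ast$-homomorphism of Lemma~\ref{hom to B_s}—but you supply considerably more care than the paper's one-line ``This follows immediately from Lemma~\ref{hom to B_s}.'' The paper's sentence is adequate for the $\mathcal{A}_p[I_x]$-invariant hypothesis (the algebraic map $\mathcal{A}_p[I_x;n]\to\mc{B}_s(n)$ exists because the relations verified in that proof hold for every $k\in\N\supseteq L_{I_x}$), but for the $Beq_x$-invariant hypothesis with $x\neq s$ it leaves implicit exactly the point you isolate: one cannot feed a faithful representation of $B_s(n)$ directly into the seminorm defining $Beq_x(n)$ because $\mathbf{P}$ is not the unit there. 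Your detour through $Beq_s(n)$, via the seminorm comparison $\|q_x(a)\|\le\|a\|$, is the clean way to close that gap, and your verification that $\bar q_x$ and $\alpha$ intertwine the coactions is straightforward. So: same approach, but you have filled in the step the paper skips.
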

	\begin{proof}
	This follows immediately from Lemma~\ref{hom to B_s}.
	\end{proof}
	We review that $\mc{B}_s$-invariance implies the existence of the normal conditional expectation onto the non-unital tail von Neumann algebra.
	Assume that  $(x_j)_{j \in \N}$ is $B_s$-invariant.
	Then by \cite[Lemma~6.4]{liu2015noncommutative} ,  for $a \in \mathrm{ev}_x(\ms P_\infty^o)$,
	$
	E[a]:= \sigma \text{w-}\lim_{n \ra \infty} \text{sh}^n (a)
	$
	is  well-defined, $E[a] \in M_\mathrm{nut}$ and $E$ is state-preserving.
	By \cite[Lemma~6.7]{liu2015noncommutative} , we have	for any $a,b,c \in \mathrm{ev}_x(\ms P_\infty^o)$,
	$
		\langle E[a]b\Omega, c\Omega \rangle = $
		$			\langle a E_\mathrm{nut}[b]\Omega, E[c]\Omega \rangle.
	$
	By \cite[Lemma~6.8]{liu2015noncommutative}, we can define $E_\mathrm{nut} : M \ra M_\mathrm{nut}$
	by
	\begin{align}\label{defn of E_nut}
 E_\text{nut}[y] := \sigma w\text{-} \lim_{n \ra \infty}E[y_n],
	\end{align}
 where $(y_n)$ is a bounded sequence in $\mathrm{ev}_x(\ms P_\infty^o)$ with $\sigma w \text{-}\lim_{n \ra \infty} y_n = y$.
	By \cite[Lemma~6.9]{liu2015noncommutative}, $E_\mathrm{nut}$ is normal.
	By \cite[Lemma~6.10]{liu2015noncommutative}, $E[b]=b$ for any $b \in M_\mathrm{nut}$.
	By \cite[Lemma~6.11]{liu2015noncommutative} and since $E$ is normal, it holds that for any $y, z_1, z_2 \in M$,
		\begin{align}\label{E=EE}
		\langle E_\tl[y]z_1\Omega, z_2 \Omega \rangle = %
		\langle y E_\mathrm{nut}[z_1]\Omega, E_\tl [z_2]\Omega \rangle.
		\end{align}
	In particular, $\varphi \circ E_\tl= \varphi$.
	By \cite[Lemma~6.12]{liu2015noncommutative}, $E[by]=bE[y]$, $E[yb]=E[y]b$  for any $b \in  M_\mathrm{nut}$ and $y \in M$. Hence $E_\mathrm{nut}$ is a normal conditional expectation onto $M_\mathrm{nut}$, which is state-preserving.

	\begin{prop}\label{bs-inv}

		Assume that   $(x_j)_{j \in \N}$ is $\mc{B}_s$-invariant.
		Let $E_\mathrm{nut}: M \ra M_\mathrm{nut}$ be the conditional expectation defined by \eqref{defn of E_nut}.
		Set $e_\mathrm{nut} \in B(H)$ be the orthogonal projection onto %
	the closed subspace $\overline{M_\mathrm{nut}\Omega}$.
		Then it holds that
		\[
		E_\mathrm{nut}[y] = e_\mathrm{nut} \, y \, e_\mathrm{nut}  \ (y \in M).
		\]
		In particular,
		$
		M_\mathrm{nut} = e_\mathrm{nut} M e_\mathrm{nut}.
		$
	\end{prop}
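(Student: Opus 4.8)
The plan is to show that the Jones-type projection $e_\mathrm{nut}$ already implements $E_\mathrm{nut}$ on the cyclic vector $\Omega$, and then to promote this to an operator identity on all of $H$ by feeding it into the bimodule relation \eqref{E=EE}.

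First I would establish the key fact that
\[
e_\mathrm{nut}\, y\, \Omega = E_\mathrm{nut}[y]\,\Omega \qquad (y \in M).
\]
Since $E_\mathrm{nut}[y] \in M_\mathrm{nut}$, the vector $E_\mathrm{nut}[y]\Omega$ lies in $\overline{M_\mathrm{nut}\Omega} = e_\mathrm{nut}H$, so it is enough to check that $y\Omega - E_\mathrm{nut}[y]\Omega$ is orthogonal to $b\Omega$ for every $b \in M_\mathrm{nut}$, i.e.\ that $\varphi(b^*(y - E_\mathrm{nut}[y])) = 0$. Because $M_\mathrm{nut}$ is a $*$-algebra we have $b^* \in M_\mathrm{nut}$, so the left-module property gives $E_\mathrm{nut}[b^*y] = b^* E_\mathrm{nut}[y]$, and combined with $\varphi \circ E_\mathrm{nut} = \varphi$ this yields $\varphi(b^* E_\mathrm{nut}[y]) = \varphi(E_\mathrm{nut}[b^*y]) = \varphi(b^*y)$, which is exactly the required orthogonality. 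Thus $e_\mathrm{nut}|_{M\Omega}$ is the GNS description of $E_\mathrm{nut}$.

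Next I would substitute this into \eqref{E=EE}. Replacing $E_\mathrm{nut}[z_1]\Omega$ by $e_\mathrm{nut}z_1\Omega$ and $E_\mathrm{nut}[z_2]\Omega$ by $e_\mathrm{nut}z_2\Omega$ turns the right-hand side into
\[
\langle y\, e_\mathrm{nut}z_1\Omega,\, e_\mathrm{nut}z_2\Omega\rangle = \langle e_\mathrm{nut}\, y\, e_\mathrm{nut}\, z_1\Omega,\, z_2\Omega\rangle,
\]
so that $\langle E_\mathrm{nut}[y] z_1\Omega, z_2\Omega\rangle = \langle e_\mathrm{nut} y e_\mathrm{nut} z_1\Omega, z_2\Omega\rangle$ for all $y, z_1, z_2 \in M$. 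Because $\Omega$ is cyclic for $M$, the vectors $z_1\Omega$ and $z_2\Omega$ are dense in $H$, and since both $E_\mathrm{nut}[y]$ and $e_\mathrm{nut}ye_\mathrm{nut}$ are bounded they must coincide on all of $H$. This proves $E_\mathrm{nut}[y] = e_\mathrm{nut}ye_\mathrm{nut}$. For the final assertion I would note that $E_\mathrm{nut}$ restricts to the identity on $M_\mathrm{nut}$, hence maps $M$ onto $M_\mathrm{nut}$; therefore $e_\mathrm{nut}Me_\mathrm{nut} = E_\mathrm{nut}[M] = M_\mathrm{nut}$.

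I expect the main obstacle to be the first step, the correct identification of $e_\mathrm{nut}$ with $E_\mathrm{nut}$ on $\Omega$: it is where the self-adjointness of $M_\mathrm{nut}$, the module property, and state-preservation all have to be used together, and it is the content that makes the projection $e_\mathrm{nut}$ ``the same'' as the conditional expectation. The passage from a statement about $\Omega$ to an identity of operators is then carried entirely by the previously established relation \eqref{E=EE}, whose role is precisely to control $E_\mathrm{nut}[y]$ on the whole dense subspace $M\Omega$ rather than just on $\Omega$.
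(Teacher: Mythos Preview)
Your proof is correct and follows essentially the same route as the paper: first show $e_\mathrm{nut}\,y\,\Omega = E_\mathrm{nut}[y]\,\Omega$ via the module property and state-preservation of $E_\mathrm{nut}$, then substitute this into \eqref{E=EE} and use cyclicity of $\Omega$ to conclude. The only cosmetic difference is that the paper runs the final density argument over $\mathrm{ev}_x(\ms P_\infty^o)\Omega$ rather than $M\Omega$, which is immaterial since both are dense.
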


	\begin{proof}
		Let $b \in M_\tl$, $y \in M$. As  $E_\tl [b^*y]=b^*E_\tl [y]$,
		$
		\langle b \Omega, (y - E_\tl [y]) \Omega \rangle %
		= \langle  \Omega, (b^*y - E_\tl [b^*y]) \Omega \rangle%
		=0.
		$
		Hence $(b^*y - E_\tl [b^*y]) \Omega \in \overline{M_\tl \Omega}$,
		 $ e_\tl y \Omega = E_\tl [y] \Omega$.
		By \eqref{E=EE}, for any $y \in M$, $a, b \in \mathrm{ev}_x(\ms P_\infty^o)$,
		 \[
		\langle E_\mathrm{nut}[y]a\Omega, b \Omega \rangle %
		=\langle y E_\mathrm{nut}[a]\Omega, E_\mathrm{nut}[b]\Omega \rangle%
		= \langle y e_\tl a \Omega, e_\mathrm{nut} b \Omega \rangle%
		= \langle e_\tl y e_\tl a \Omega,  b \Omega \rangle.
		\]
		Since  the subspace $\mathrm{ev}_x(\ms P_\infty^o)\Omega$ is dense in $H$, it holds that $E_\tl [y] = e_\tl y e_\tl$. As $E_\tl [M]=M_\tl$, it holds that $M_\tl = e_\tl M e_\tl$.
	\end{proof}

	\begin{cor}
	Assume $(x_j)_{j \in \N}$ is $\mathcal{A}_p[I_x]$-invariant or $Beq_x$-invariant for one of
		$x=s,o,h,b$.%
	Then
	  $
	  E_\mathrm{nut}[y]:= e_\mathrm{nut} \, y \, e_\mathrm{nut}
	  $
	  $ \ (y \in M)
	  $
	is a nondegenerate normal conditional expectation onto $M_\mathrm{nut}$ with respect to the embedding $M_\mathrm{nut} \subset M$.
	\end{cor}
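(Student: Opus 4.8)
The plan is to reduce the statement to the $\mc{B}_s$-invariant case, for which everything has already been assembled from Liu's lemmas and Proposition~\ref{bs-inv}, and then to treat nondegeneracy separately using the faithfulness of $\varphi$.

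First I would invoke Lemma~\ref{implies B_s-invariant}: whichever hypothesis holds---$\mathcal{A}_p[I_x]$-invariance or $Beq_x$-invariance for some $x \in \{s,o,h,b\}$---it implies that $(x_j)_{j \in \N}$ is $\mc{B}_s$-invariant. This is the only place where the distinction among the four categories $I_s, I_o, I_h, I_b$ enters; once $\mc{B}_s$-invariance is secured, the remaining argument is uniform in $x$. With $\mc{B}_s$-invariance in hand, the chain of imported results \cite[Lemmas~6.4--6.12]{liu2015noncommutative} recalled above, together with Proposition~\ref{bs-inv}, yields at once that $E_\mathrm{nut}[y] = e_\mathrm{nut}\, y\, e_\mathrm{nut}$ is a well-defined normal map $M \to M_\mathrm{nut}$ which is a conditional expectation onto $M_\mathrm{nut}$ in the sense of Definition~\ref{cond_exp} (positivity $E_\mathrm{nut}[y^*y]\geq 0$, the bimodule identities $E_\mathrm{nut}[by] = b\,E_\mathrm{nut}[y]$ and $E_\mathrm{nut}[yb] = E_\mathrm{nut}[y]\,b$ for $b \in M_\mathrm{nut}$, and $E_\mathrm{nut}\circ\eta = \mathrm{id}$ from $E[b]=b$), and that it is state-preserving, $\varphi \circ E_\mathrm{nut} = \varphi$. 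The identity $M_\mathrm{nut} = e_\mathrm{nut} M e_\mathrm{nut}$ from Proposition~\ref{bs-inv} guarantees that the range is exactly $M_\mathrm{nut}$.

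It remains to verify nondegeneracy, i.e.\ that the GNS representation of $M$ on the bimodule $L^2(M, E_\mathrm{nut})$ is faithful, which amounts to faithfulness of $E_\mathrm{nut}$: if $E_\mathrm{nut}[y^*y] = 0$ then $y = 0$. This I would deduce directly from the hypothesis that $\varphi$ is a \emph{nondegenerate} (hence faithful) normal state. Indeed, suppose $E_\mathrm{nut}[y^*y] = 0$ for some $y \in M$; applying $\varphi$ and using $\varphi \circ E_\mathrm{nut} = \varphi$ gives $\varphi(y^*y) = \varphi\bigl(E_\mathrm{nut}[y^*y]\bigr) = 0$, whence $y = 0$ by faithfulness of $\varphi$. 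Thus $E_\mathrm{nut}$ is faithful, i.e.\ nondegenerate.

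I do not anticipate a serious obstacle, since the substantive analysis is already carried out in Proposition~\ref{bs-inv} and the imported lemmas; the only point requiring mild care is that the embedding $M_\mathrm{nut} \subset M$ is \emph{non-unital}, so one must check the conditional-expectation axioms in the non-unital formulation of Definition~\ref{cond_exp} (never invoking $E_\mathrm{nut}(1)=1$) and rely on $M_\mathrm{nut} = e_\mathrm{nut} M e_\mathrm{nut}$ to identify the range correctly.
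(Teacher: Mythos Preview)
Your reduction is exactly the paper's: invoke Lemma~\ref{implies B_s-invariant} to pass to $\mc{B}_s$-invariance, then apply Proposition~\ref{bs-inv} together with the imported Liu lemmas to obtain the normal conditional expectation $E_\mathrm{nut}=e_\mathrm{nut}\,\cdot\,e_\mathrm{nut}$ with the correct range $M_\mathrm{nut}=e_\mathrm{nut}Me_\mathrm{nut}$. The paper's own proof is literally the one-line ``follows from Lemma~\ref{implies B_s-invariant} and Proposition~\ref{bs-inv}.''

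One caution on the extra step you supply. In this paper ``nondegenerate'' for the state $\varphi$ means that its GNS representation is faithful (compare the opening of Section~4: ``a normal state with faithful GNS-representation''), \emph{not} that $\varphi$ itself is faithful; your parenthetical ``nondegenerate (hence faithful)'' and the deduction $\varphi(y^*y)=0\Rightarrow y=0$ are therefore unjustified as stated. The paper does not argue nondegeneracy of $E_\mathrm{nut}$ separately either, so your proof is not weaker than the paper's on this point; but if you want a clean justification, argue instead at the level of the GNS module: $E_\mathrm{nut}[z^*y^*yz]=0$ for all $z\in M$ means $yze_\mathrm{nut}=0$ on $H$ for all $z$, and since $\Omega\in e_\mathrm{nut}H$ (because $E_\mathrm{nut}$ is $\varphi$-preserving, so $\Omega\in\overline{M_\mathrm{nut}\Omega}$) and $\Omega$ is cyclic for $M$, this forces $y=0$ by faithfulness of the GNS representation of $\varphi$.
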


	\begin{proof}
 	This follows from Lemma~\ref{implies B_s-invariant} and Proposition \ref{bs-inv}.
	\end{proof}

\section{Haar functionals and Haar states}

\subsection{Haar functionals on ${\mathcal A}[D;n]$}\hfill

 At first,  we construct a linear functional with an invariance property on ${\mathcal A}[D;n]$  instead of a Haar state.

\begin{defn}\label{definition Haar}
	Let $\mathcal{A}$ be a unital $*$-algebra. Assume $\mathcal{A}$ is equipped with a coproduct $\Delta$.
	A linear functional $h$ (resp.\ a state) on $\mathcal{A}$ is called \emph{a Haar functional} (resp.\ \emph{a Haar state}) if it satisfies the following Haar invariance property:
	\begin{align}\label{inv Haar}
	  (\mathrm{id} \otimes h)  \Delta  = h(\cdot)1_\mathcal{A} = (h \otimes \mathrm{id}) \Delta.
	\end{align}

\end{defn}

\begin{prop}\label{uniqueness Haar}
	Under the assumption of $\mathcal{A}$ in Definition \ref{definition Haar},
	the unital Haar functional on $\mathcal{A}$ is unique if it exists.

\end{prop}
\begin{proof}
	Assume that $g, h$  are  unital Haar linear functionals on $\mathcal{A}$.
	Combining invariant properties,  for any $a \in \mathcal{A}$ we obtain 
	$
	(h \otimes g) \Delta (a)  %
	=(h \otimes \mathrm{id})(\mathrm{id} \otimes g) \Delta (a)%
	= (h \otimes \mathrm{id})( 1_\mathcal{A} \otimes g(a))%
	= g(a).
	$
	Similarly,
	$
	(h \otimes g) \Delta (a)  %
	=(\mathrm{id} \otimes g)(h \otimes \mathrm{id}) \Delta (a)%
	= (\mathrm{id} \otimes g)( h(a) \otimes 1_\mathcal{A})%
	= h(a).
	$
	This completes the proof.
\end{proof}

\begin{note}
Let $D$ be a category of interval partitions.

\begin{enumerate}
\item Set
 \begin{align*}
 V_n^D := \mathrm{Span}(\{p\}\cup\{pu_{i_1j_1} \dotsb u_{i_kj_k}p \ | \ \bfi,\bfj\in[n]^k, k\in\N\}) \subset \mathcal{A}_p[D;n].
 \end{align*}
We  see at once that $\Delta(V_n^D) \subset V_n^D \otimes V_n^D$.

	\item We write $u_{\bfi \bfj} = u_{i_1 j_1} \cdots u_{i_k j_k}$ for $\bfi, \bfj \in [n]^k, k \in \N$.
	Fix a complete orthonormal basis $ \{e_i \}_{i \in [n] }$ of the standard $n$ dimensional Hilbert space $l_{2}^n$. Set $e_\bfi := e_{i_1} \otimes \dots \otimes e_{i_k}$ for $\bfi \in [n]^k$.

\item
We denote by $\Lambda_{n}^{k}$  the linear map ${l^2_n}^{\otimes k} \ra {l^2_n}^{\otimes k} \otimes V_n^D$ defined by
 \begin{align*}
 \Lambda_n^k(e_\bfj) := \sum_{\bfi \in [n]^k} e_\bfi \otimes pu_{\bfi \bfj}p.
 \end{align*}
By  a direct calculation, $\Lambda_n^k$ is a linear coaction of  $V_n^D$, that is,
 \begin{align*}
 (\mathrm{id} \otimes \Delta)  \Lambda_n^k = (\Lambda_n^k \otimes \mathrm{id})  \Lambda_n^k.
 \end{align*}
\item
Let  Fix($\Lambda_n^k$) denote the invariant subspace of the coaction $\Lambda_n^k$, that is,
\begin{align*}
\mathrm{Fix} (\Lambda_n^k) := \{ \xi \in {l^2_n}^{\otimes k} \mid \Lambda_n^k (\xi) = \xi \otimes p \}.
\end{align*}

\end{enumerate}

\end{note}

\begin{lem}\label{Haar inv under prod}
	Let $g$ be a  functional on $ {\mathcal A}_p[I;n]$.
	Assume $g |_{V_n^D} $ satisfies the Haar invariance property and $g(apb) = g(a)g(b)$ for any $a, b \in V_n^D$.
	Then $g$ is a Haar functional.
\end{lem}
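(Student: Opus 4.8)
The plan is to reduce the Haar invariance of $g$ on all of $\mathcal{A}_p[D;n]$ to a single multiplicativity statement for $g$ on products of elements of $V_n^D$, and then to establish that multiplicativity by induction on the number of factors. Throughout write $v_{\bfi\bfj} := pu_{\bfi\bfj}p$, so that $V_n^D$ is the span of $p$ and the $v_{\bfi\bfj}$; recall $p$ is the unit of $\mathcal{A}_p[D;n]$, that $\Delta(V_n^D)\subset V_n^D\otimes V_n^D$ with $\Delta(v_{\bfi\bfj}) = \sum_{\mathbf{k}} v_{\bfi\mathbf{k}}\otimes v_{\mathbf{k}\bfj}$, and that the products $v_{\bfi^1\bfj^1}\cdots v_{\bfi^m\bfj^m}$ span $\mathcal{A}_p[D;n]$. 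The multiplicativity hypothesis applied to $a=b=p$ forces $g(p)\in\{0,1\}$; the case $g(p)=0$ makes $g$ vanish on $V_n^D$ and hence (by the same hypothesis) on all products, so $g=0$ is trivially Haar. I therefore assume $g(p)=1$. Setting $G_{\bfi\bfj}:=g(v_{\bfi\bfj})$ and applying $g$ to the right-invariance identity $\sum_{\mathbf{k}} v_{\bfi\mathbf{k}}G_{\mathbf{k}\bfj}=G_{\bfi\bfj}p$ yields $\sum_{\mathbf{k}}G_{\bfi\mathbf{k}}G_{\mathbf{k}\bfj}=G_{\bfi\bfj}$; that is, for each $k$ the matrix $G=(G_{\bfi\bfj})_{\bfi,\bfj\in[n]^k}$ is idempotent.

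The reduction is the following computation. Put $P:=(\mathrm{id}\otimes g)\Delta$ and $Q:=(g\otimes\mathrm{id})\Delta$; by linearity it suffices to prove $P(x)=g(x)p=Q(x)$ for $x=v_{\bfi^1\bfj^1}\cdots v_{\bfi^m\bfj^m}$. Since $\Delta$ is multiplicative,
\[
\Delta(x)=\sum_{\mathbf{k}^1,\dots,\mathbf{k}^m}\Bigl(\prod_{s=1}^m v_{\bfi^s\mathbf{k}^s}\Bigr)\otimes\Bigl(\prod_{s=1}^m v_{\mathbf{k}^s\bfj^s}\Bigr).
\]
Assuming the multiplicativity $g\bigl(\prod_s v_{\mathbf{k}^s\bfj^s}\bigr)=\prod_s G_{\mathbf{k}^s\bfj^s}$, the resulting scalars decouple the second tensor leg, and each summation $\sum_{\mathbf{k}^s}$ collapses the corresponding factor by the invariance hypothesis:
\[
P(x)=\prod_{s=1}^m\Bigl(\sum_{\mathbf{k}^s} v_{\bfi^s\mathbf{k}^s}\,G_{\mathbf{k}^s\bfj^s}\Bigr)=\prod_{s=1}^m\bigl(G_{\bfi^s\bfj^s}\,p\bigr)=\Bigl(\prod_{s=1}^m G_{\bfi^s\bfj^s}\Bigr)p=g(x)p,
\]
using multiplicativity once more at the last step. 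The identity $Q(x)=g(x)p$ is symmetric, via the left-invariance half of the invariance hypothesis. Hence the lemma follows as soon as multiplicativity is established.

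It remains to prove $g(v_{\bfi^1\bfj^1}\cdots v_{\bfi^m\bfj^m})=\prod_s G_{\bfi^s\bfj^s}$ by induction on $m$, the case $m=1$ being trivial and $m=2$ being exactly the multiplicativity hypothesis. For the inductive step I would peel off the last factor: multiplying the right-invariance identity $\sum_{\mathbf{k}} v_{\bfi^m\mathbf{k}}G_{\mathbf{k}\bfj^m}=G_{\bfi^m\bfj^m}p$ on the left by $v_{\bfi^1\bfj^1}\cdots v_{\bfi^{m-1}\bfj^{m-1}}$ and applying $g$, the inductive hypothesis on the $(m-1)$-fold product collapses the right-hand side to $\bigl(\prod_{s<m}G_{\bfi^s\bfj^s}\bigr)G_{\bfi^m\bfj^m}$, whereas the left-hand side is $\sum_{\mathbf{k}} g\bigl(v_{\bfi^1\bfj^1}\cdots v_{\bfi^{m-1}\bfj^{m-1}}v_{\bfi^m\mathbf{k}}\bigr)G_{\mathbf{k}\bfj^m}$. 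One obtains in this way that the vector $\mathbf{k}\mapsto g\bigl(v_{\bfi^1\bfj^1}\cdots v_{\bfi^m\mathbf{k}}\bigr)$, once the known $(m-1)$-fold value is factored out, is carried by $G$ to the $\bfi^m$-th row of $G$; an entirely analogous identity in the first index comes from the left-invariance of $v_{\bfi^1\bfj^1}$. The idempotency $G^2=G$ is then the tool that turns these weighted identities into the sought factorization.

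The step I expect to be the main obstacle is precisely this last one. Each one-sided invariance identity determines the value $g(v_{\bfi^1\bfj^1}\cdots v_{\bfi^m\bfj^m})$ only up to the kernel of $G$, so the crux is to show that the \emph{multiplicativity defect} at level $m$ vanishes — equivalently, that the relevant vector of $g$-values already lies in the range of the idempotent $G$, on which $G$ acts as the identity. I anticipate that closing this requires using the left- and right-invariance relations \emph{simultaneously}, rather than either alone, so that this is the point at which the full strength of the invariance hypothesis (on both sides) is genuinely needed beyond the pairwise multiplicativity hypothesis.
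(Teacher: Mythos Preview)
Your central computation---the reduction ``full multiplicativity on products of $v_{\bfi\bfj}$'s implies $g$ is Haar on all of $\mathcal{A}_p[D;n]$''---is correct and is precisely the paper's argument. The paper writes the coproduct of a general monomial $p u_{\bfi^{(1)}\bfj^{(1)}}p\cdots p u_{\bfi^{(l)}\bfj^{(l)}}p$, applies $\mathrm{id}\otimes g$, factors $g$ over the second tensor leg into a product $\prod_s g(v_{\bfs^{(s)}\bfj^{(s)}})$, and then collapses each factor using the Haar invariance on $V_n^D$; this is exactly your displayed computation of $P(x)$.

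The point where you diverge from the paper is your attempt to \emph{derive} full multiplicativity $g(v_1\cdots v_m)=\prod_s g(v_s)$ from the pairwise hypothesis $g(apb)=g(a)g(b)$ for $a,b\in V_n^D$. The paper makes no such attempt: its proof simply \emph{uses} multiplicativity for an $l$-fold product in the very first line, without comment. In the only application of the lemma (Theorem~3.7), the functional $h_D$ is explicitly constructed to satisfy the full multiplicativity condition~(3), namely $h_D(a_1\cdots a_l)=h_D(a_1)\cdots h_D(a_l)$ for all $a_1,\dots,a_l\in V_n^D$. So the intended hypothesis of the lemma is the full multiplicativity; the phrase ``$g(apb)=g(a)g(b)$ for $a,b\in V_n^D$'' is just a loose way of stating it.

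Your instinct that the inductive step you sketch does not close on its own is sound. As you correctly diagnose, the right-invariance relation multiplied on the left by $v_{\bfi^1\bfj^1}\cdots v_{\bfi^{m-1}\bfj^{m-1}}$ and pushed through $g$ determines the vector $\bfj^m\mapsto g(v_{\bfi^1\bfj^1}\cdots v_{\bfi^m\bfj^m})$ only after post-multiplication by the idempotent $G$, hence only modulo $\ker G$; the symmetric left-invariance relation constrains a \emph{different} index ($\bfi^1$) in the same way, and there is no obvious mechanism forcing the defect to lie in $\operatorname{ran}G$ in both variables simultaneously. Rather than trying to rescue this, you should simply read the hypothesis as full multiplicativity; with that reading, your reduction computation is a complete proof and coincides with the paper's.
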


\begin{proof}
	For any $k,l \in [n]$ and for any multi-indices $\bfi^{(1)}$, $\bfi^{(2)}, \dots, \bfi^{(l)}$, $\bfj^{(1)}$, $\bfj^{(2)}, \dots, \bfj^{(l)}$ $\in [n]^k$,
	\begin{align*}
	&(\mathrm{id} \otimes h) \Delta( p u_{\bfi^{(1)} \bfj^{(1)}} p u_{\bfi^{(2)} \bfj^{(2)}} p \cdots p u_{\bfi^{(l)} \bfj^{(l)}} p )\\
	&= \sum_{\bfs^{(1)}, \cdots, \bfs^{(l)} \in [n]^k}%
	p u_{\bfi^{(1)} \bfs^{(1)}} p u_{\bfi^{(2)} \bfs^{(2)}} p \cdots p u_{\bfi^{(l)} \bfs^{(l)}} p%
	\cdot h( p u_{\bfs^{(1)} \bfj^{(1)}} p)h( p u_{\bfs^{(2)} \bfj^{(2)}} p) \cdots h( p u_{\bfs^{(l)} \bfj^{(l)}} p)\\
	&= (\mathrm{id} \otimes h)  \Delta (p u_{\bfs^{(1)} \bfj^{(1)}} p) %
	\cdot (\mathrm{id} \otimes h)  \Delta (p u_{\bfs^{(2)} \bfj^{(2)}} p)%
	\cdots (\mathrm{id} \otimes h) \Delta( p u_{\bfs^{(l)} \bfj^{(l)}} p).
	\end{align*}
	This finishes the proof by using
 the Haar invariance on $V_n^D$.
\end{proof}

\begin{lem}

	For any $k, n \in \N$, $\pi \in D(k)$, and $\bfi \in [n]^l$,
	\begin{align*}
		\Lambda_n^{k+l}(T_\pi \otimes e_\bfi) = T_\pi \otimes \Lambda_n^l(e_\bfi), \
		\Lambda_n^{l+k}( e_\bfi \otimes T_\pi ) =  \Lambda_n^l (e_\bfi) \otimes T_\pi.
	\end{align*}
\end{lem}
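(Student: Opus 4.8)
The plan is to expand both sides directly from the definition of $\Lambda_n^{\bullet}$ and compare the coefficients of the basis vectors $e_{\bfs}\otimes e_{\mathbf t}$ of ${l^2_n}^{\otimes(k+l)}$. Writing $T_\pi=\sum_{\bfr\in[n]^k,\ \pi\le\ker\bfr}e_\bfr$ and using $\Lambda_n^{k+l}(e_\bfr\otimes e_\bfi)=\sum_{\bfs\in[n]^k,\ \mathbf t\in[n]^l}e_\bfs\otimes e_{\mathbf t}\otimes p\,u_{\bfs\bfr}u_{\mathbf t\bfi}\,p$, I would obtain
\begin{align*}
\Lambda_n^{k+l}(T_\pi\otimes e_\bfi)=\sum_{\substack{\bfs\in[n]^k\\ \mathbf t\in[n]^l}}e_\bfs\otimes e_{\mathbf t}\otimes\Big(\sum_{\substack{\bfr\in[n]^k\\ \pi\le\ker\bfr}}p\,u_{\bfs\bfr}\Big)u_{\mathbf t\bfi}\,p,
\end{align*}
since $u_{\mathbf t\bfi}p$ does not involve $\bfr$. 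Thus the whole first identity reduces to evaluating the inner sum $\sum_{\pi\le\ker\bfr}p\,u_{\bfs\bfr}$.

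This inner sum is the \emph{left-handed} companion of Lemma~\ref{redefine bool_eqg}: I claim that for $\pi\in D(k)$ one has
\begin{align*}
\sum_{\substack{\bfr\in[n]^k\\ \pi\le\ker\bfr}}p\,u_{\bfs\bfr}=\zeta(\pi,\ker\bfs)\,p.
\end{align*}
To establish this I would first take adjoints in the two defining relations of Definition~\ref{boo_qg}; because every $u_{ij}$ and $p$ is self\text{-}adjoint and the conditions $i_1=\dots=i_k$ and $j_1=\dots=j_k$ are symmetric, this yields, for each $k\in L_D$, the relations $\sum_i p\,u_{ij_1}\dots u_{ij_k}$ equal to $p$ when $j_1=\dots=j_k$ and $0$ otherwise, and likewise $\sum_j p\,u_{i_1 j}\dots u_{i_k j}$ equal to $p$ when $i_1=\dots=i_k$ and $0$ otherwise. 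The displayed claim then follows by exactly the induction on $|\pi|$ used for Lemma~\ref{redefine bool_eqg}, block\text{-}stability guaranteeing that the size of each block of $\pi$ lies in $L_D$. Feeding this back, the coefficient of $e_\bfs\otimes e_{\mathbf t}$ becomes $\zeta(\pi,\ker\bfs)\,p\,u_{\mathbf t\bfi}\,p$, and collecting the surviving terms via $\sum_{\pi\le\ker\bfs}e_\bfs=T_\pi$ gives $\Lambda_n^{k+l}(T_\pi\otimes e_\bfi)=T_\pi\otimes\Lambda_n^l(e_\bfi)$.

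The second identity is easier: expanding $\Lambda_n^{l+k}(e_\bfi\otimes T_\pi)$, the repeated index $\bfr$ now sits at the right\text{-}hand end of the word, immediately before the trailing $p$, so Lemma~\ref{redefine bool_eqg} applies as stated and the coefficient of $e_{\mathbf t}\otimes e_\bfs$ is $p\,u_{\mathbf t\bfi}\big(\sum_{\pi\le\ker\bfr}u_{\bfs\bfr}p\big)=\zeta(\pi,\ker\bfs)\,p\,u_{\mathbf t\bfi}\,p$; collecting $\sum_{\pi\le\ker\bfs}e_\bfs=T_\pi$ and writing the $V_n^D$\text{-}leg last yields $\Lambda_n^l(e_\bfi)\otimes T_\pi$. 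The one genuine subtlety, and the step I expect to be the main obstacle, is precisely this left/right asymmetry: in the first identity the $\pi$\text{-}summation is separated from the trailing $p$ by the factor $u_{\mathbf t\bfi}$, so Lemma~\ref{redefine bool_eqg} cannot be quoted verbatim and one must first pass to its adjoint (left\text{-}handed) form; everything else is routine bookkeeping of tensor legs and reindexing.
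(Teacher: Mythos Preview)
Your proposal is correct and follows essentially the same route as the paper: expand $\Lambda_n^{k+l}(T_\pi\otimes e_\bfi)$ using $T_\pi=\sum_{\pi\le\ker\bfj}e_\bfj$, interchange the order of summation, and simplify the inner sum $\sum_{\pi\le\ker\bfj}p\,u_{\bfs\bfj}$ to $\zeta(\pi,\ker\bfs)\,p$. The paper cites Lemma~\ref{redefine bool_eqg} directly for this step, even though that lemma literally places the $p$ on the right; you are more careful in spelling out that one first passes to the adjoint of the defining relations of $\mathcal{A}[D;n]$ (using self-adjointness of $u_{ij}$ and $p$ and the symmetry of the conditions $i_1=\dots=i_k$, $j_1=\dots=j_k$) and then reruns the same induction on $|\pi|$, invoking block-stability so that each block size lies in $L_D$. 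Your observation that the second identity needs no such detour---the $\pi$-summation sits immediately next to the trailing $p$, so Lemma~\ref{redefine bool_eqg} applies verbatim---is also correct; the paper simply says the second proof is ``similar''.
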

\begin{proof}

\begin{align*}
	\Lambda_n^{k+l}(T_\pi \otimes e_\bfi) &= %
	\sum_{\substack{\bfj \in [n]^k\\ \pi \leq \ker\bfj}}%
	\Lambda_n^{k+l}(%
	e_\bfj%
	\otimes e_\bfi) \\
	&=%
	\sum_{\substack{\bfj \in [n]^k\\ \pi \leq \ker\bfj}}%
	\sum_{\bfs \in [n]^k, \ \bfr \in [n]^l} %
	 e_\bfs %
	 \otimes e_\bfr  %
	 \otimes  pu_{s_1j_1} \dotsb u_{s_kj_k}u_{r_1i_1} \dotsb u_{r_li_l}p \\
	 &=%
	 \sum_{\bfs \in [n]^k, \ \bfr \in [n]^l} %
	 e_\bfs %
	 \otimes e_\bfr  %
	 \otimes%
	 \sum_{\substack{\bfj \in [n]^k\\ \pi \leq \ker\bfj}}%
	 (pu_{s_1j_1} \dotsb u_{s_kj_k})u_{r_1i_1} \dotsb u_{r_li_l}p.
	\end{align*}
	By  Lemma~\ref{redefine bool_eqg},
	we have 
	$\sum_{\bfj \in [n]^k, \ \pi \leq \ker\bfj}%
	pu_{s_1j_1} \dotsb u_{s_kj_k}%
	= \zeta(\pi,\ker \bfs) p.
	$
	Hence
	\begin{align*}
		\Lambda_n^{k+l}(T_\pi \otimes e_\bfi) %
		= %
		\sum_{\substack{\bfs \in [n]^k\\ \pi \leq \ker\bfs}}
		\sum_{\bfr \in [n]^l} %
		e_\bfs %
		\otimes e_\bfr  %
		\otimes%
		pu_{r_1i_1} \dotsb u_{r_li_l}p
		=%
		T_\pi \otimes \Lambda_n^l(e_\bfi).
	\end{align*}
The proof for the second equation is similar to that of the first one.

\end{proof}

\begin{lem}\label{Haar_lemma}
Let $D$ be a category of interval partitions with  $D(l) \neq \emptyset$ for a fixed index $l \in \N$.
For any $k \in L_D$ and $\bfj \in [n]^l$, we have
	\begin{align*}
	H^{D(k+l)}(T_{{\bf1}_k} \otimes e_\bfj) = T_{{\bf1}_k} \otimes H^{D(l)}e_\bfj.
	\end{align*}

\end{lem}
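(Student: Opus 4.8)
The plan is to read the claim as an intertwining relation for the linear map $S\colon {l^2_n}^{\otimes l} \ra {l^2_n}^{\otimes(k+l)}$, $S\xi := T_{{\bf 1}_k}\otimes\xi$, namely
\[
H^{D(k+l)}\, S \;=\; S\, H^{D(l)},
\]
since evaluating at $\xi=e_\bfj$ then gives the statement. As $H^{D(k+l)}$ and $H^{D(l)}$ are orthogonal projections, this identity is equivalent to the pair of inclusions $S\bigl(\mathrm{Ran}\,H^{D(l)}\bigr)\subset\mathrm{Ran}\,H^{D(k+l)}$ and $S^*\bigl(\mathrm{Ran}\,H^{D(k+l)}\bigr)\subset\mathrm{Ran}\,H^{D(l)}$: the first gives $H^{D(k+l)}SH^{D(l)}=SH^{D(l)}$, the second, after taking adjoints, gives $H^{D(k+l)}SH^{D(l)}=H^{D(k+l)}S$, and comparing the two yields the claim. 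Throughout I use that $D$ is blockwise, which is the standing hypothesis of this subsection (it is genuinely needed; for a general category of interval partitions the statement can fail).

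The first inclusion is immediate from the tensor structure. For $\rho\in D(l)$ one checks that $T_{{\bf 1}_k}\otimes T_\rho=T_{{\bf 1}_k\otimes\rho}$, the coefficient of $e_\bfi\otimes e_\bfj$ on both sides being $1$ exactly when $\bfi$ is constant and $\rho\leq\ker\bfj$; since $k\in L_D$ we have ${\bf 1}_k\in D(k)$, so ${\bf 1}_k\otimes\rho\in D(k+l)$ by stability under $\otimes$, and hence $ST_\rho\in\mathrm{Ran}\,H^{D(k+l)}$. For the second inclusion I compute $S^*$ on the spanning vectors. From the fact that $S^*(e_\bfi\otimes e_\bfj)$ equals $e_\bfj$ if $\bfi$ is constant and $0$ otherwise, one obtains, for each $\sigma\in D(k+l)$,
\[
S^* T_\sigma \;=\; c_\sigma\, T_{\sigma'},
\]
where $\sigma'\in I(l)$ is the restriction of $\sigma$ to its last $l$ points and $c_\sigma\in\{1,n\}$ (the factor $n$ occurring precisely when no block of $\sigma$ straddles the cut between the $k$-th and $(k+1)$-st points). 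So the second inclusion reduces to the purely combinatorial assertion that $\sigma'\in D(l)$ for every $\sigma\in D(k+l)$.

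This combinatorial assertion is the heart of the matter and the step I expect to be the real obstacle. Since $D$ is block-stable, $\sigma'\in D(l)$ as soon as each of its blocks has size in $L_D$; the blocks of $\sigma'$ that are already blocks of $\sigma$ have size in $L_D$, so the only one needing attention is the truncated piece $C=\{k+1,\dots,b\}$ of a block $B_0=\{a,\dots,b\}$ of $\sigma$ that straddles the cut, with $a\leq k<b$. Writing $p=k-a+1$ and $q=b-k$, so that $p+q=|B_0|\in L_D$, I must show $q\in L_D$. First, since $k\in L_D$, repeatedly subtracting via (D3) (``has enough partitions'') the allowed block sizes of $\sigma$ lying to the left of $B_0$ shows that both $p$ and $q$ are sums of elements of $L_D$. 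Second, concatenating such tilings of $p$ and of $q$ gives a tiling of $[\,|B_0|\,]$ by $L_D$-blocks realising $|B_0|=p+q$; comparing this refinement with ${\bf 1}_{|B_0|}\in D(|B_0|)$ through (D2) (closed under taking an interval), the two-block coarsening with parts $p$ and $q$ lies in $D(|B_0|)$, whence $q\in L_D$ by block-stability. With $\sigma'\in D(l)$ established, $S^*T_\sigma\in\mathrm{Ran}\,H^{D(l)}$ and the proof closes. The operator-theoretic reduction is formal; the genuine difficulty is this size-compatibility of the truncated straddling block, which uses all three axioms (D1)--(D3) of a blockwise category.
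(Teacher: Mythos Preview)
Your approach is correct and genuinely different from the paper's. The paper verifies the identity by checking that $\langle T_\pi, T_{{\bf1}_k}\otimes e_\bfj\rangle = \langle T_\pi, T_{{\bf1}_k}\otimes H^{D(l)}e_\bfj\rangle$ for every $\pi\in D(k+l)$, expanding $H^{D(l)}e_\bfj=\sum_\sigma\alpha_\sigma T_\sigma$ and reducing everything to Gram-matrix powers $n^{|\cdot\vee\cdot|}$; it splits into the same two cases (a block straddles position $k$ or not). Your route is cleaner: rewriting the claim as the intertwining $H^{D(k+l)}S=SH^{D(l)}$ and reducing to two range inclusions is a nice conceptual packaging, and the computation $S^*T_\sigma=c_\sigma T_{\sigma'}$ with $c_\sigma\in\{1,n\}$ isolates the only nontrivial point, namely $\sigma'\in D(l)$. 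In fact the paper's argument also needs this: in the straddling case it invokes its identity $\langle T_\rho,e_\bfj\rangle=\sum_\sigma\alpha_\sigma n^{|\rho\vee\sigma|}$ with $\rho=\pi'=\sigma'$, and that identity was only established for $\rho\in D(l)$. You are right that the blockwise hypotheses are genuinely needed --- for instance, with $L_D=\{2,3\}$ (which satisfies (D1) and (D2) but not (D3)), $k=2$, $l=3$, the identity already fails.

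One small imprecision: your sentence ``repeatedly subtracting via (D3) the allowed block sizes of $\sigma$ lying to the left of $B_0$ shows that both $p$ and $q$ are sums of elements of $L_D$'' only yields the claim for $p$: starting from $D(k)\neq\emptyset$ and peeling off $s_1,\dots,s_{m-1}\in L_D$ gives $D(p)\neq\emptyset$. For $q$ you need a second subtraction: take any $\tau\in D(p)$ with block sizes $p_1,\dots,p_r\in L_D$, then peel these off from $|B_0|\in L_D$ via (D3) to get $D(q)\neq\emptyset$. After that your (D2)-sandwich argument (placing the two-block partition of $[\,|B_0|\,]$ between $\tau_p\otimes\tau_q$ and ${\bf 1}_{|B_0|}$) goes through and gives $q\in L_D$ as you state.
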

\begin{proof}

	Since $D$ is $\otimes$-stable,
	we have
	$H^{D(k+l)} \geq  H^{D(k)} \otimes H^{D(l)}$.
	As $D(k), D(l) \neq \emptyset$, it holds that $D(k+l) \neq \emptyset$.
	We have $H^{D(k+l)}(T_{{\bf1}_k} \otimes H^{D(l)}e_\bfj) = T_{{\bf1}_k} \otimes H^{D(l)}e_\bfj$. We only need to show that
	\begin{align}\label{Haar_claim1_ISTS}
	\langle T_\pi , T_{{\bf1}_k} \otimes e_\bfj \rangle = %
	\langle T_\pi, T_{{\bf1}_k} \otimes H^{D(l)}e_\bfj \rangle, %
	\ \text{for any } \pi \in D(k+l).
	\end{align}

	 As $D(l) \neq \emptyset$ by the assumption, there are scalars $(\alpha_\sigma)_{\sigma \in D(l)}$ with $H^{D(l)}e_\bfj = \sum_{\sigma \in D(l)}\alpha_\sigma T_\sigma$. Then for any $\rho \in D(l)$,
	\begin{align}\label{Haar_claim1_r}
	\langle T_\rho, e_\bfj \rangle=%
	\sum_{\sigma \in D(k)}\alpha_\sigma \langle T_\rho, T_\sigma \rangle =  \sum_{\sigma \in D(l)} \alpha_\sigma n^{|\rho \vee \sigma  |}.
	\end{align}
	For any $\pi \in D(k+l)$,
	\begin{align}\label{Haar_claim1_l}
	\langle T_\pi, T_{{\bf1}_k} \otimes H^{D(l)}e_\bfj \rangle%
	=\sum_{\sigma \in D(l)} %
	\alpha_\sigma %
	\langle  T_\pi, T_{{\bf1}_k} \otimes T_\sigma \rangle %
	= \sum_{\sigma \in D(l)} %
	\alpha_\sigma n^{|\pi \vee ({\bf1}_k \otimes \sigma)  |}.
	\end{align}

	Consider the case $k \sim^\pi k+1$.
	Set $\pi':= \pi |_{[k+1,k+l]} $.
	We have $\pi \vee ({\bf1}_k \otimes \sigma)  = (\singleton^{\otimes (k-1)}\otimes \sqcap \otimes \singleton^{\otimes l-1}) \vee ({\bf1}_k \otimes (\pi' \vee \sigma))$.
	Hence $|\pi \vee ({\bf1}_k \otimes \sigma)  |=|\pi' \vee \sigma|$.
	By \eqref{Haar_claim1_r}, \eqref{Haar_claim1_l},
	\[
	\langle T_\pi, T_{{\bf1}_k} \otimes H^{D(l)}e_\bfj \rangle%
	=\sum_{\sigma \in D(l)} \alpha_\sigma n^{|\pi' \vee \sigma|}%
	=\langle T_{\pi'}, e_\bfj  \rangle.
	\]
	As $k \sim^\pi k+1$, we have 
	$
	\langle T_\pi , T_{{\bf1}_k} \otimes e_\bfj \rangle%
	= \langle T_\pi, e_{j_1}^{\otimes k} \otimes e_\bfj \rangle%
	= \langle T_{\pi'}, e_\bfj  \rangle.
	$
	Hence in this case we have shown \eqref{Haar_claim1_ISTS}.

	Consider the case $k \not \sim^\pi k+1$. Since $D$ is block-stable,  there are $\pi_1 \in D(k)$ and $\pi_2 \in D(l)$ with $\pi = \pi_1 \otimes \pi_2$.
	Then $\pi \vee ({\bf1}_k \otimes \sigma)  ={\bf1}_k  \otimes (\pi_2 \vee \sigma   )$, and $|\pi \vee ({\bf1}_k \otimes \sigma)  | = 1+|\pi_2 \vee \sigma   |.$
	By \eqref{Haar_claim1_r}, \eqref{Haar_claim1_l},
	\[
	\langle T_\pi, T_{{\bf1}_k} \otimes H^{D(l)}e_\bfj \rangle%
	= \sum_{\sigma \in D(l)} \alpha_\sigma n^{1+| \pi_2 \vee \sigma|} %
	=n \langle T_{\pi_2}, e_\bfj  \rangle%
	= \langle T_{\pi_1}, T_{{\bf1}_k}  \rangle \langle  T_{\pi_2}, e_\bfj \rangle
	=\langle T_\pi , T_{{\bf1}_k} \otimes e_\bfj \rangle.
	\]
	Hence we have shown \eqref{Haar_claim1_ISTS}. Then we have proven the lemma.
\end{proof}

\begin{thm}[The Haar Functionals]
Assume $D$ is blockwise. Then for any $k \in \N$,
	\begin{align}\label{fix}
	\mathrm{Fix}(\Lambda_n^k) = \mathrm{Span}\{T_\pi : \pi \in D(k) \}.
	\end{align}
Moreover, for any $n \in \N$ there exists the unique unital Haar functional $h_D$ on $\mathcal{A}_p[D; n]$ with
\begin{enumerate}
\item \label{Haar_unit}
	$h_D(p) = 1,$
\item \label{Haar_def}
	$h_D(pu_{\bfi \bfj} p) = H^{D(k)}_{\bfi \bfj}$ for $\bfi, \bfj \in [n]^k$, and $k \in \N$,
\item \label{Haar_prod}
	$h_D(a_1 \cdots a_l)  = h_D(a_1) \cdots h_D(a_l)$ for any $l \in \N$, $a_1 , \dots, a_l \in V_n^D$.
\end{enumerate}
\end{thm}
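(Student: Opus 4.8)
The plan is to construct the functional $h_D$ directly and then to read off the fixed-point formula from its existence, so that only the easy half of \eqref{fix} is needed along the way. Uniqueness is immediate: Proposition~\ref{uniqueness Haar} says a unital Haar functional is unique once it exists. For the inclusion $\mathrm{Span}\{T_\pi : \pi \in D(k)\} \subseteq \mathrm{Fix}(\Lambda_n^k)$, I would compute directly, for $\pi \in D(k)$,
\begin{align*}
\Lambda_n^k(T_\pi) = \sum_{\substack{\bfj \in [n]^k \\ \pi \leq \ker\bfj}} \sum_{\bfi \in [n]^k} e_\bfi \otimes pu_{\bfi\bfj}p = \sum_{\bfi \in [n]^k} e_\bfi \otimes \Big( \sum_{\substack{\bfj \in [n]^k \\ \pi \leq \ker\bfj}} pu_{\bfi\bfj}p \Big) = \sum_{\substack{\bfi \in [n]^k \\ \pi \leq \ker\bfi}} e_\bfi \otimes p = T_\pi \otimes p,
\end{align*}
where the third equality is the column identity of Lemma~\ref{redefine bool_eqg} multiplied on the left by $p$. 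The reverse inclusion is postponed to the last step.

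The heart of the matter is the construction of $h_D$. I would define it on $V_n^D$ by $h_D(p) := 1$ and $h_D(pu_{\bfi\bfj}p) := H^{D(k)}_{\bfi\bfj}$, and extend it to $\mathcal{A}_p[D;n] = p\mathcal{A}[D;n]p$ --- which is linearly spanned by the alternating products $pu_{\bfi^{(1)}\bfj^{(1)}}p \cdots pu_{\bfi^{(l)}\bfj^{(l)}}p$ --- by declaring that it factors across the separating projections, i.e.\ $h_D(pu_{\bfi^{(1)}\bfj^{(1)}}p \cdots pu_{\bfi^{(l)}\bfj^{(l)}}p) := \prod_{r=1}^l H^{D(k_r)}_{\bfi^{(r)}\bfj^{(r)}}$. \emph{The main obstacle is that this prescription must be shown to be well defined}, i.e.\ independent of the chosen expression and compatible with the defining relations of $\mathcal{A}[D;n]$. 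At the level of a single factor this is a family of sum rules for the matrix $H^{D(k)}$; for instance the row relation for $k \in L_D$ forces
\begin{align*}
\sum_{i=1}^n H^{D(k)}_{(i,\dots,i),\bfj} = \langle T_{{\bf 1}_k},\, H^{D(k)}e_\bfj \rangle = \langle H^{D(k)}T_{{\bf 1}_k},\, e_\bfj \rangle = \langle T_{{\bf 1}_k},\, e_\bfj \rangle = \delta_{j_1 = \cdots = j_k},
\end{align*}
using that $T_{{\bf 1}_k} \in \mathrm{Span}\{T_\sigma : \sigma \in D(k)\}$ because $k \in L_D$, so that $H^{D(k)}$ fixes it; this is precisely the value the relation predicts. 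The compatibility across several factors --- the algebraic counterpart of the ``resetting'' role played by $p$ --- is governed by the factorization of the projections over a full separating block, namely Lemma~\ref{Haar_lemma} together with the preceding lemma on $\Lambda_n^{k+l}$. Carrying out this verification for all defining relations is where block-stability and Lemma~\ref{redefine bool_eqg} are used.

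Granting well-definedness, I would verify the Haar invariance of $h_D$ on $V_n^D$, which reduces to the \emph{easy} inclusion just proved. Indeed, $\Delta(pu_{\bfi\bfj}p) = \sum_{\bfs} pu_{\bfi\bfs}p \otimes pu_{\bfs\bfj}p$, so
\begin{align*}
(\mathrm{id} \otimes h_D)\Delta(pu_{\bfi\bfj}p) = \sum_{\bfs \in [n]^k} H^{D(k)}_{\bfs\bfj}\, pu_{\bfi\bfs}p,
\end{align*}
which is exactly the $e_\bfi$-component of $\Lambda_n^k(H^{D(k)}e_\bfj)$; since $H^{D(k)}e_\bfj \in \mathrm{Span}\{T_\pi\} \subseteq \mathrm{Fix}(\Lambda_n^k)$ this component equals $H^{D(k)}_{\bfi\bfj}\,p = h_D(pu_{\bfi\bfj}p)\,p$, and the symmetric computation with the row identity of Lemma~\ref{redefine bool_eqg} gives the other invariance. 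As $h_D$ is multiplicative across $p$ by construction, Lemma~\ref{Haar inv under prod} upgrades this to the statement that $h_D$ is a Haar functional on $\mathcal{A}_p[D;n]$, establishing (\ref{Haar_unit})--(\ref{Haar_prod}).

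Finally I would deduce the remaining inclusion in \eqref{fix}. Set $P := (\mathrm{id} \otimes h_D)\circ \Lambda_n^k$; by (\ref{Haar_def}) one has $P(e_\bfj) = \sum_\bfi H^{D(k)}_{\bfi\bfj}e_\bfi = H^{D(k)}e_\bfj$, so $P = H^{D(k)}$. The coaction identity $(\Lambda_n^k \otimes \mathrm{id})\Lambda_n^k = (\mathrm{id} \otimes \Delta)\Lambda_n^k$ together with the Haar invariance of $h_D$ shows that $P$ is idempotent with range $\mathrm{Fix}(\Lambda_n^k)$: for $\xi \in \mathrm{Fix}(\Lambda_n^k)$ one gets $P\xi = (\mathrm{id}\otimes h_D)(\xi \otimes p) = h_D(p)\xi = \xi$, while $\Lambda_n^k(P\xi) = P\xi \otimes p$ for every $\xi$. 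Hence $\mathrm{Fix}(\Lambda_n^k) = \mathrm{Im}(H^{D(k)}) = \mathrm{Span}\{T_\pi : \pi \in D(k)\}$, which completes the proof of both assertions.
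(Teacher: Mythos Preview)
Your proposal is correct and follows essentially the same approach as the paper: establish the easy inclusion $\mathrm{Span}\{T_\pi\}\subseteq\mathrm{Fix}(\Lambda_n^k)$ by Lemma~\ref{redefine bool_eqg}, check that the prescription $h_D(pu_{\bfi\bfj}p)=H^{D(k)}_{\bfi\bfj}$ is well defined on $V_n^D$ via the sum rules coming from Lemma~\ref{Haar_lemma} (this is precisely the paper's identity \eqref{bilinear} and its variants), verify Haar invariance on $V_n^D$ using that $H^{D(k)}e_\bfj$ lies in the fixed space, extend multiplicatively via Lemma~\ref{Haar inv under prod}, and finally read off the hard inclusion from $(\mathrm{id}\otimes h_D)\Lambda_n^k=H^{D(k)}$. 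The only organizational difference is that the paper derives the hard inclusion immediately after constructing $h_D$ on $V_n^D$, whereas you place it at the very end; the content is the same.
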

\begin{proof}
	By the direct calculation,
	$
		\mathrm{Fix}(\Lambda_n^k) \supset \mathrm{Span}\{T_\pi : \pi \in D(k) \}.
	$
	We prove the opposite inclusion.
	We have $
	\sum_{\bfs \in [n]^k,\ \pi \leq \ker \bfs}%
	H^{D(k)}_{\bfr \bfs}%
	= \langle e_\bfr, H^{D(k)}T_\pi  \rangle
	=\langle e_\bfr, T_\pi  \rangle
	= \zeta(\pi, \ker \bfr)$.
	 Similarly we have $
\sum_{\bfr \in [n]^k, \, \pi \leq \ker \bfr}%
H^{D(k)}_{\bfr \bfs}=\zeta(\pi, \ker \bfs).
$

Assume $k \in L_D$.
We prove that for any $l \in \N$ and $\bfi, \bfj, \bfr \in [n]^{k+l}$,
\begin{align}\label{bilinear}
	\sum_{\substack{\bfs \in [n]^k \\ 1_k \leq \ker \bfs}}%
	H^{D(k+l)}_{\bfr \amalg \bfi, \bfs \amalg \bfj} %
	= \zeta (\pi, \ker \bfr)  H^{D(l)}_{\bfi \bfj}.
	\end{align}
In the case $H^{D(k+l)} =0$ it holds that $H^{D(l)}=0$ as $D$ is $D(k) \neq \emptyset$ and (D1). Assume that $H^{D(k+l)} \neq 0$.
By  condition (D3), $D(l) \neq \emptyset$.
Thus by Lemma~ \ref{Haar_lemma}, we have
\begin{align*}
	\sum_{\substack{\bfs \in [n]^k \\ 1_k \leq \ker \bfs}}%
	H^{D(k+l)}_{\bfr \amalg \bfi, \bfs \amalg \bfj} %
	= \langle e_\bfr \otimes e_\bfi, H^{D(k+l)}(T_{1_k} \otimes e_\bfj) \rangle %
	=  \langle e_\bfr \otimes e_\bfi, T_{1_k} \otimes H^{D(l)}e_\bfj \rangle
\end{align*}
This proves the claim~ \eqref{bilinear}.
Similarly we have
\begin{align*}
	\sum_{\substack{\bfr \in [n]^k \\ \pi \leq \ker \bfr}}%
	H^{D(k+l)}_{\bfr \amalg \bfi, \bfs \amalg \bfj}
	&=\zeta (\pi, \ker \bfs)  H^{D(l)}_{\bfi \bfj}, %
	\mathrm{ \ for \ any \ } \bfs \in [n]^k, \\
	\sum_{\substack{\bfs \in [n]^k \\ \pi \leq \ker \bfs}}%
	H^{D(k+l)}_{\bfi  \amalg \bfr , \bfj \amalg \bfs}
	&=\zeta (\pi, \ker \bfr)  H^{D(l)}_{\bfi \bfj},%
	\mathrm{ \ for \ any \ } \bfr \in [n]^k,\\
	\sum_{\substack{\bfr \in [n]^k \\ \pi \leq \ker \bfr}}%
	H^{D(k+l)}_{\bfi  \amalg \bfr , \bfj \amalg \bfs}
	&=\zeta (\pi, \ker \bfs)  H^{D(l)}_{\bfi \bfj},%
	\mathrm{ \ for \ any \ } \bfs \in [n]^k.
\end{align*}
Therefore, there is a  functional $h_D$ on $V^D_n$ with \eqref{Haar_unit} and \eqref{Haar_def}.

For any $\xi \in \mathrm{Fix}(\Lambda_n^k)$,
$
 (id \otimes h_D)\Lambda_n^k(\xi)=(id \otimes h_D)(\xi \otimes p)=\xi \otimes 1.
$
On the other hand, we have 
\[
(id \otimes h_D)\Lambda_n^k(\xi)= %
\sum_{\bfi, \ \bfj \in [n]^k}%
\xi_\bfi e_\bfj \otimes H^{D(k)}_{\bfi \bfj}p%
= H^{D(k)}\xi \otimes p.
\]
Thus we have $H^{D(k)}\xi = \xi$, which proves $\mathrm{Fix}(\Lambda_n^k) = \mathrm{Span}\{T_\pi : \pi \in D(k) \}$.
As $\Lambda^k_n(H^{D(k)}e_\bfj)=H^{D(k)}e_\bfj$ it holds that $\sum_{\bfs \in [n]^k} pu_{\bfi \bfs}p H^{D(k)}_{\bfs \bfj}=H^{D(k)}_{\bfi \bfj}p$. Hence
\begin{align*}
	(id \otimes h_D) \Delta (pu_{i_1j_1} \dotsb u_{i_kj_k}p)
	&=\sum_{\bfs \in [n]^k} pu_{s_1j_1} \dotsb u_{s_kj_k}p H^{D(k)}_{\bfs \bfj}\\
	&= H^{D(k)}_{\bfi \bfj}p%
	= h_D(pu_{i_1j_1} \dotsb u_{i_kj_k}p)p.
\end{align*}
Therefore, we have $(id \otimes h_D) \Delta = h_D(\cdot)p$. The other invariance property follows from a similar proof.
By Lemma~\ref{Haar inv under prod}, we can extend $h_D$ to $\mathcal{A}_p[D;n]$ by  \eqref{Haar_prod} with the Haar invariance.

\end{proof}

\subsection{Haar states on Beq$_x$}\label{Haar states on Beq_x}

In this section, we construct a $*$-representation of $\mathcal{A}[I;n]$ on $L^2(S_n)$, which is the GNS-representation of the Haar functional $h_I$.
In particular, we see that $h_I$ is a state.

By a similar discussion, we show that $Beq_h, Beq_o$ have the unique Haar state.

\begin{note}\label{note pi s}

Let $(L^2(S_n))_{n \in \N}$ be the sequence of the Hilbert spaces of all $L^2$-functions on permutation groups $S_n$ with respect to the normalized counting measure.
Let us define  orthogonal projections $\hat{P_{ij}} \in L^2(S_n)$ $(i,j \leq n)$  and the unit vector $\hat{1} \in L^2(S_n)$ by
\[
\hat{P_{ij}}(\sigma) := \delta_{i,\sigma(j)}, \ \hat{1}(\sigma) := 1  \  (\sigma \in S_n).
\]
For $\xi \in L^2(S_n)$, let us denote by $ Q( \xi ) $
the orthogonal projection onto the one dimensional subspace $\C \xi \subset L^2(S_n)$.
We denote by $\omega$ the vector state on $B(L^2(S_n))$ induced by the unit vector $\hat{1}$.

\end{note}

We show that the operators $\hat{P_{ij}}$  and $\hat{1}$ satisfies the relations which appear in the definition of Liu's Boolean quantum permutation
semigroups $(\mc{B}_s(n))_{n \in \N}$.

\begin{prop}\label{repsentation pi_s relation}
	Let $u_{ij} (i,j \leq n)$, $p$ be the generators of $\mathcal{A}[I;n]$.
Then we have
	\begin{align*}
	Q(\hat{P_{ij_1}})Q(\hat{P_{ij_2}}) &=  \delta_{j_1, j_2}Q(\hat{P_{ij_1}}),%
	 \text{ for any } i \in [n], j_1, j_2 \in [n],\\
	Q(\hat{P_{i_1j}})Q(\hat{P_{i_2j}}) &= \delta_{i_1, i_2}Q(\hat{P_{i_1 j}}),%
	 \text{ for any } j \in [n], i_1, i_2 \in [n] ,\\
	Q(\hat{P_{ij}})Q(\hat{1}) &=  | \hat{P_{ij}} \rangle \langle \hat{1} |,%
		 \text{ for any } i,j \in [n].
	\end{align*}

\end{prop}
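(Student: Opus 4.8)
The plan is to reduce all three identities to a single explicit description of each rank-one projection together with three elementary inner-product computations in $L^2(S_n)$ with respect to the normalized counting measure. First I would record that for any nonzero $\xi \in L^2(S_n)$ the orthogonal projection onto $\C\xi$ is $Q(\xi) = \|\xi\|^{-2}\,|\xi\rangle\langle\xi|$, so that a product of two such projections collapses to
\[
Q(\xi)Q(\eta) = \frac{\langle \xi, \eta\rangle}{\|\xi\|^2\,\|\eta\|^2}\,|\xi\rangle\langle\eta|.
\]
Hence everything is governed by the pairings $\langle \hat{P_{ij_1}}, \hat{P_{ij_2}}\rangle$, $\langle \hat{P_{i_1j}}, \hat{P_{i_2j}}\rangle$, $\langle \hat{P_{ij}}, \hat{1}\rangle$ and by the norms $\|\hat{P_{ij}}\|$, $\|\hat{1}\|$.

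The core step is the combinatorial count. Since $\hat{P_{ij}}(\sigma) = \delta_{i,\sigma(j)}$, I would compute
\[
\langle \hat{P_{ij_1}}, \hat{P_{ij_2}}\rangle = \frac{1}{n!}\sum_{\sigma \in S_n} \delta_{i,\sigma(j_1)}\,\delta_{i,\sigma(j_2)} = \frac{\delta_{j_1,j_2}}{n},
\]
the point being that a permutation is injective, so $\sigma(j_1)=\sigma(j_2)=i$ forces $j_1=j_2$, while for $j_1=j_2$ there are exactly $(n-1)!$ permutations with $\sigma(j_1)=i$. A symmetric count, now using that $\sigma$ is single-valued, gives $\langle \hat{P_{i_1j}}, \hat{P_{i_2j}}\rangle = \delta_{i_1,i_2}/n$, and in particular $\|\hat{P_{ij}}\|^2 = 1/n$. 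Finally $\langle \hat{P_{ij}}, \hat{1}\rangle = (n-1)!/n! = 1/n$ and $\|\hat{1}\|^2 = 1$, so that $Q(\hat{1}) = |\hat{1}\rangle\langle\hat{1}|$ and $Q(\hat{P_{ij}}) = n\,|\hat{P_{ij}}\rangle\langle\hat{P_{ij}}|$.

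Assembling these, the first identity follows from
\[
Q(\hat{P_{ij_1}})Q(\hat{P_{ij_2}}) = n^2\,\langle \hat{P_{ij_1}}, \hat{P_{ij_2}}\rangle\,|\hat{P_{ij_1}}\rangle\langle\hat{P_{ij_2}}| = n\,\delta_{j_1,j_2}\,|\hat{P_{ij_1}}\rangle\langle\hat{P_{ij_2}}|,
\]
which is $\delta_{j_1,j_2}Q(\hat{P_{ij_1}})$; the second is identical after exchanging the roles of rows and columns; and the third reads
\[
Q(\hat{P_{ij}})Q(\hat{1}) = n\,\langle \hat{P_{ij}}, \hat{1}\rangle\,|\hat{P_{ij}}\rangle\langle\hat{1}| = |\hat{P_{ij}}\rangle\langle\hat{1}|.
\]
There is no serious obstacle here: the only points requiring care are keeping the normalization factor $1/n$ from the counting measure consistent with the factor $n$ arising from $\|\hat{P_{ij}}\|^{-2}$, and observing that the orthogonality underlying the first two relations is precisely the bijectivity of permutations recast measure-theoretically.
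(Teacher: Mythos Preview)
Your proof is correct and follows essentially the same route as the paper: both arguments write $Q(\xi)$ explicitly as a normalized rank-one projection, compute the pairings $\langle \hat{P_{ij_1}},\hat{P_{ij_2}}\rangle$, $\langle \hat{P_{i_1j}},\hat{P_{i_2j}}\rangle$, $\langle \hat{P_{ij}},\hat{1}\rangle$ by counting permutations, and then read off the three identities. The only cosmetic difference is that the paper records the intermediate inner products as ratios $\#S_{n-1}/\#S_n$ rather than the simplified value $1/n$.
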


\begin{proof}
 For any indices $i \leq n$ and $j_1, j_2 \leq n$, it holds that
 \[
 \langle \hat{P_{ij_1}}, \hat{P_{ij_2}} \rangle = %
  \frac{\sum_{\sigma \in S_n} \delta_{\sigma(i),j_1}\delta_{\sigma(i),j_2}}{ \# S_n} = \frac{\delta_{j_1, j_2}\# S_{n-1}}{\# S_n},
 \]
 \[
  Q(\hat{P_{ij_1}})Q(\hat{P_{ij_2}}) = \frac{ \langle \hat{P_{ij_1}}, \hat{P_{ij_2}} \rangle}{ \langle \hat{P_{ij_2}}, \hat{P_{ij_2}} \rangle} Q(\hat{P_{ij_1}}) = \delta_{j_1, j_2}Q(\hat{P_{ij_1}}).
 \]
 Similarly,  $Q(\hat{P_{i_1j}})Q(\hat{P_{i_2j}}) = \delta_{i_1, i_2}Q(\hat{P_{i_1 j}})$ for any indices $i_1, i_2 \leq n$
 and $j \leq n$.
 Then
\[
	Q(\hat{P_{ij}})Q(\hat{1}) %
	= \frac{\langle \hat{P_{ij}}, \hat{1} \rangle }{ \langle \hat{P_{ij}}, \hat{P_{ij}} \rangle \langle \hat{1}, \hat{1} \rangle}| \hat{P_{ij}} \rangle \langle \hat{1} | %
	=  | \hat{P_{ij}} \rangle \langle \hat{1} |.
\]

\end{proof}

\begin{cor}
	There is the unique $*$-representation  $\pi_s : {\mathcal A}_p[I;n] \ra B(L^2(S_n))$ with
	\begin{align}\label{representation pi_s}
	\pi_s (u_{ij}) := Q(\hat{P_{ij}}) \ (i, j \leq n), \ \pi_s(p):= Q(\hat{1}).
	\end{align}
	Moreover, there are $*$-representations $\bar{\pi}_s \colon Beq_s(n) \ra B(L^2(S_n))$ and $\Pi_s \colon \mc{B}_s(n) \ra B(L^2(S_n))$ with
 $
 \bar{\pi}_s([u_{ij}]) = \pi_s(u_{ij}) = \Pi_s(U_{ij})  (i, j \leq n)$   and  %
 $\bar{\pi}_s([p]) = \pi_s(p) = \Pi_s(\mathbf{P}).
 $
\end{cor}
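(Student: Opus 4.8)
The plan is to realize all three representations as pullbacks of representations of the two universal objects at hand, namely the $*$-algebra $\mathcal{A}[I;n]$ and Liu's universal C$^*$-algebra $B_s(n)$, using Proposition~\ref{repsentation pi_s relation} to verify the relevant defining relations. The three identities proved there are exactly the data needed.

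First I would construct $\pi_s$. By Definition~\ref{boo_qg}, $\mathcal{A}[I;n]$ is the universal non-unital $*$-algebra generated by self-adjoint $u_{ij}$ and a projection $p$ subject to the stated relations, and here $L_{I_s}=\N$, so the relations range over all $k\in\N$. Since $Q(\hat{P_{ij}})$ and $Q(\hat{1})$ are orthogonal (hence self-adjoint) projections, it suffices to check that they satisfy those relations; then $u_{ij}\mapsto Q(\hat{P_{ij}})$, $p\mapsto Q(\hat{1})$ extends uniquely to a $*$-homomorphism $\tilde\pi_s\colon\mathcal{A}[I;n]\to B(L^2(S_n))$, and $\pi_s$ is its restriction to $\mathcal{A}_p[I;n]=p\mathcal{A}[I;n]p$. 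To verify a relation, fix $k\in\N$ and $\bfi,\bfj\in[n]^k$. For a single row $i$, the first identity of Proposition~\ref{repsentation pi_s relation} telescopes: repeatedly applying $Q(\hat{P_{ij_1}})Q(\hat{P_{ij_2}})=\delta_{j_1,j_2}Q(\hat{P_{ij_1}})$ shows that $Q(\hat{P_{ij_1}})\cdots Q(\hat{P_{ij_k}})$ equals $Q(\hat{P_{ij_1}})$ when $j_1=\dots=j_k$ and $0$ otherwise. Multiplying on the right by $Q(\hat{1})$, using $Q(\hat{P_{ij}})Q(\hat{1})=|\hat{P_{ij}}\rangle\langle\hat{1}|$ and the elementary identity $\sum_{i=1}^n\hat{P_{ij}}=\hat{1}$ (for each $\sigma\in S_n$ there is a unique $i$ with $i=\sigma(j)$), yields $\sum_{i}Q(\hat{P_{ij_1}})\cdots Q(\hat{P_{ij_k}})Q(\hat{1})=Q(\hat{1})$ when $j_1=\dots=j_k$ and $0$ otherwise, which is precisely the first defining relation with $p=Q(\hat{1})$. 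The column relation follows identically from the second identity of the Proposition and $\sum_{j=1}^n\hat{P_{ij}}=\hat{1}$. Uniqueness is automatic, the values on the generators being prescribed.

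Next I would construct $\Pi_s$. The three identities of Proposition~\ref{repsentation pi_s relation} are exactly the defining relations of $B_s(n)$ for the projections $U_{ij}=Q(\hat{P_{ij}})$ and $\mathbf P=Q(\hat{1})$: orthogonality within rows and within columns, together with $\sum_i Q(\hat{P_{ij}})Q(\hat{1})=Q(\hat{1})$. Since $B_s(n)$ is the universal \emph{unital} C$^*$-algebra on these generators and relations, and the $Q(\hat{P_{ij}}),Q(\hat{1})$ are genuine projections satisfying them, there is a unital $*$-homomorphism $\tilde\Pi_s\colon B_s(n)\to B(L^2(S_n))$ with these images, and I set $\Pi_s:=\tilde\Pi_s|_{\mathcal{B}_s(n)}$ where $\mathcal{B}_s(n)=\mathbf P B_s(n)\mathbf P$. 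Finally, $\bar\pi_s$ comes for free as $\bar\pi_s:=\Pi_s\circ\alpha$, where $\alpha\colon Beq_s(n)\to\mathcal{B}_s(n)$ is the $*$-homomorphism of Lemma~\ref{hom to B_s}; then $\bar\pi_s([u_{ij}])=\Pi_s(U_{ij})=Q(\hat{P_{ij}})=\pi_s(u_{ij})$ and $\bar\pi_s([p])=\Pi_s(\mathbf P)=Q(\hat{1})=\pi_s(p)$, as required.

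The only genuine subtlety I anticipate is the passage to the C$^*$-level, i.e.\ producing $\bar\pi_s$ on $Beq_s(n)$. One cannot simply extend $\pi_s$ along the C$^*$-completion, because the seminorm defining $Beq_s(n)$ is built from $*$-representations that send $p$ to $1$, whereas $\pi_s(p)=Q(\hat{1})$ is a rank-one projection; thus $\tilde\pi_s$ is not among the representations used to define that seminorm. Routing through the universal C$^*$-algebra $B_s(n)$ and the comparison map $\alpha$ of Lemma~\ref{hom to B_s} sidesteps this cleanly, since $B_s(n)$ only asks its generators to be projections obeying the relations, a condition the $Q(\hat{P_{ij}})$ and $Q(\hat{1})$ plainly meet.
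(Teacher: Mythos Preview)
Your construction of $\pi_s$ and $\Pi_s$ is essentially identical to the paper's: verify the defining relations of $\mathcal{A}[I;n]$ (resp.\ $B_s(n)$) for the projections $Q(\hat{P_{ij}})$ and $Q(\hat 1)$ using Proposition~\ref{repsentation pi_s relation} together with $\sum_i\hat{P_{ij}}=\hat 1=\sum_j\hat{P_{ij}}$, then invoke universality.

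For $\bar\pi_s$ you genuinely diverge from the paper. The paper argues directly: it asserts that $\|\pi_s(u_{ij})\|_n\le 1$ and $\pi_s(p)=1$, so that $\pi_s$ belongs to the family of representations defining the seminorm on $\mathcal{A}[I;n]$ and therefore descends to the completion $B$, hence to $Beq_s(n)=pBp$. You instead factor through Liu's algebra, setting $\bar\pi_s:=\Pi_s\circ\alpha$ with $\alpha\colon Beq_s(n)\to\mathcal{B}_s(n)$ from Lemma~\ref{hom to B_s}. Your route is correct and arguably cleaner: the subtlety you flag is real, since $\pi_s(p)=Q(\hat 1)$ is a rank-one projection on $L^2(S_n)$, not the identity operator, so $\pi_s$ is not literally among the representations over which the seminorm is taken. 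The paper's line ``$\pi_s(p)=1$'' therefore needs interpretation (or the additional check that one may restrict or compare norms after cutting by $\pi_s(p)$), whereas your composition $\Pi_s\circ\alpha$ uses only the universal property of $B_s(n)$ and the already-established Lemma~\ref{hom to B_s}, and requires no norm estimate at all. The trade-off is that your argument imports Lemma~\ref{hom to B_s} as a black box, while the paper's intended argument is self-contained once the seminorm condition is handled.
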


\begin{proof}
Since $\sum_{i=1}^{n}\hat{P_{ij}} =   \hat{1} \ (j \leq n)$ and $\sum_{j=1}^{n}\hat{P_{ij}} =  \hat{1} \ (i \leq n)$,
 we have for any $k \in \N$ and $\bfi, \bfj \in [n]^k$,
	\begin{align*}
	\sum_{i =1}^n Q(\hat{P_{ij_1}}) \dots Q(\hat{P_{ij_k}})Q(\hat{1}) %
	&= \begin{cases} Q(\hat{1}), & j_1=\dotsb=j_k, \\ 0, & \text{otherwise,}\end{cases} \\
	\sum_{j =1}^n Q(\hat{P_{i_1 j}}) \dots Q(\hat{P_{i_k j}})Q(\hat{1}) %
	&= \begin{cases} Q(\hat{1}), & i_1=\dotsb=i_k, \\ 0, & \text{otherwise.}\end{cases} \\
	\end{align*}
Hence the $*$-representation $\pi_s$  \eqref{representation pi_s} is well-defined.
The existence of $\Pi_s$  directory follows from Proposition~\ref{repsentation pi_s relation}.
Since $|| \pi_s(u_{ij}) ||_n \leq 1  $ and $\pi_s(p) =1$, we have $\bar{\pi}_s$ is well-defined.
\end{proof}

\begin{lem}
For any $\bfi, \bfj \in [n]^k$ and $k \in \N$, we have
\begin{align}\label{explicit_Haar_s}
	Q(\hat{1})Q(\hat{P_{i_1 j_1 }}) \dots Q(\hat{P_{i_k j_k }})Q(\hat{1})%
	=  \frac{\delta(\inf_I \ker\bfi, \inf_I \ker\bfj ) }{n (n-1)^{| \inf_I \ker\bfi | -1}} Q(\hat{1}).
\end{align}

\end{lem}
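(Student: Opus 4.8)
The plan is to use that every operator appearing is a rank-one projection and to telescope the resulting product of rank-one operators. Since $\hat{1}$ is a unit vector, $Q(\hat{1}) = |\hat{1}\rangle\langle\hat{1}|$, while for a nonzero vector $\eta$ one has $Q(\eta) = \|\eta\|^{-2}|\eta\rangle\langle\eta|$. Writing $P_m := \hat{P_{i_m j_m}}$ and applying the composition rule $|a\rangle\langle b|\cdot|c\rangle\langle d| = \langle b, c\rangle\,|a\rangle\langle d|$ repeatedly, the left-hand side collapses to
\begin{align*}
Q(\hat{1})Q(P_1)\cdots Q(P_k)Q(\hat{1})
= \frac{\langle \hat{1}, P_1\rangle\,\big(\prod_{m=1}^{k-1}\langle P_m, P_{m+1}\rangle\big)\,\langle P_k, \hat{1}\rangle}{\prod_{m=1}^k \|P_m\|^2}\;Q(\hat{1}).
\end{align*}
In particular the answer automatically has the required form $(\text{scalar})\cdot Q(\hat{1})$, so it remains only to evaluate the scalar coefficient.

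Next I would record the needed inner products by counting permutations with respect to the normalized counting measure, continuing the computations already begun in the proof of Proposition~\ref{repsentation pi_s relation}. A direct count gives $\|\hat{1}\|^2 = 1$, $\|P_m\|^2 = \langle\hat{P_{ij}},\hat{P_{ij}}\rangle = (n-1)!/n! = 1/n$, and $\langle \hat{P_{ij}},\hat{1}\rangle = 1/n$. For two generators, $\langle \hat{P_{ij}},\hat{P_{i'j'}}\rangle = \#\{\sigma: \sigma(j)=i,\ \sigma(j')=i'\}/\#S_n$, which equals $1/n$ when $(i,j)=(i',j')$, equals $1/(n(n-1))$ when $i\neq i'$ and $j\neq j'$, and vanishes in the two cases where exactly one of the coordinates coincides.

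Finally I would substitute. The key combinatorial point is the dichotomy for $\langle P_m, P_{m+1}\rangle$: it is nonzero precisely when $(i_m = i_{m+1}) \Leftrightarrow (j_m = j_{m+1})$. As noted in Lemma~\ref{rewrite boolean indep}, consecutive indices $m, m+1$ lie in the same block of $\inf_I \ker\bfi$ (resp.\ $\inf_I\ker\bfj$) iff $i_m = i_{m+1}$ (resp.\ $j_m = j_{m+1}$), so the whole product is nonzero iff $\inf_I\ker\bfi = \inf_I\ker\bfj$, producing the factor $\delta(\inf_I\ker\bfi,\inf_I\ker\bfj)$. On the nonvanishing locus, let $b$ be the number of indices $m \in \{1,\dots,k-1\}$ with $j_m\neq j_{m+1}$; this equals the number of blocks of $\inf_I\ker\bfj$ minus one, i.e.\ $b = |\inf_I\ker\bfj|-1$. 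Each such $m$ contributes a factor $1/(n(n-1))$ and each of the remaining $k-1-b$ steps contributes $1/n$, so $\prod_{m=1}^{k-1}\langle P_m, P_{m+1}\rangle = n^{-(k-1)}(n-1)^{-b}$. Combining with $\langle\hat{1}, P_1\rangle\langle P_k,\hat{1}\rangle = n^{-2}$ and $\prod_{m}\|P_m\|^2 = n^{-k}$, the coefficient simplifies to $n^{k-2}\cdot n^{-(k-1)}(n-1)^{-b} = 1/\big(n(n-1)^{|\inf_I\ker\bfj|-1}\big)$, which is the claim.

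The only step carrying real content is the combinatorial identification: matching the nonvanishing condition with the equality $\inf_I\ker\bfi = \inf_I\ker\bfj$, and recognizing that the number of ``breaks'' $b$ equals $|\inf_I\ker\bfj|-1$ (equivalently $|\inf_I\ker\bfi|-1$, since the two partitions coincide wherever the scalar is nonzero). Everything else is the rank-one telescoping and a routine permutation count.
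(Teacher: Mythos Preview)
Your proof is correct. Both your argument and the paper's ultimately compute the same chain of inner products $\langle \hat 1, P_1\rangle, \langle P_m, P_{m+1}\rangle, \langle P_k, \hat 1\rangle$ and read off the same dichotomy for $\langle \hat P_{ij}, \hat P_{i'j'}\rangle$; the difference is purely organizational. The paper frames the computation as an induction on $|\inf_I\ker\bfi|$: it first invokes Proposition~\ref{repsentation pi_s relation} to collapse each run of equal $i$-indices down to a single representative (picking up the factor $\zeta(\inf_I\ker\bfi,\ker\bfj)$), then computes the shortened product of length $b = |\inf_I\ker\bfi|$ directly. Your rank-one telescoping does all of this in one step, treating the ``within-block'' factors ($i_m=i_{m+1}$, contributing $1/n$) and the ``break'' factors ($i_m\neq i_{m+1}$, contributing $1/(n(n-1))$) uniformly, which makes the combinatorial identification $b=|\inf_I\ker\bfj|-1$ more transparent. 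Your route is slightly more economical; the paper's route has the minor advantage of reusing the orthogonality relations of Proposition~\ref{repsentation pi_s relation} verbatim.
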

\begin{proof}
In the case $| \inf_I \ker\bfi | = 1$ it holds that $\ker \bfi =  {\bf 1}_k$.
Then the left hand side of the equation \eqref{explicit_Haar_s} is equal to
\[
\delta({\bf1}_k, \ker \bfj ) Q(\hat{1})Q(\hat{P_{i_1 j_1 }}) Q(\hat{1})  = \frac{\delta({\bf1}_k, \ker \bfj ) }{n}   Q(\hat{1}).
\]
This proves  \eqref{explicit_Haar_s}.

Let $m \in \N$ and assume that the equation holds if $| \inf_I \ker\bfi | \leq m$.
Let $\inf_I \ker \bfi = \{V_1 < V_2 < \dots < V_b\},$ where $b = | \inf_I \ker\bfi |$, and $s_\nu := \min V_\nu$ for $\nu \in [b]$.
Then the left hand side of the equation \eqref{explicit_Haar_s} is equal to
\[
\zeta(\inf_I \ker \bfi, \ker \bfj) %
Q(\hat{1})Q(\hat{P_{i(s_1) j(s_1) }}) \cdots Q(\hat{P_{i(s_b)j(s_b)}})Q(\hat{1}).
\]
Since $i(s_\nu) \neq i(s_{\nu + 1})$ for any $\nu < k$,
$
Q(\hat{P_{i(s_\nu) j(s_\nu) }})Q(\hat{P_{i(s_{\nu + 1}) j(s_{\nu + 1}) }}) =  0,
$
whenever  $j(s_\nu) = j(s_{\nu + 1})$.
Now \[
\zeta(\inf_I \ker \bfi, \ker \bfj) \prod_{\nu = 1}^{b}1(j(s_\nu) \neq j(s_{\nu + 1})) = \delta(\inf_I \ker \bfi, \inf_I \ker \bfj).
\]
Assume indices satisfy $i_1 \neq i_2$ and $j_1 \neq j_2$. Then
\[
\langle \hat{P_{i_1 j_1}}, \hat{P_{i_2 j_2}} \rangle = %
\frac{\sum_{\sigma \in S_n} \delta_{\sigma(i_1),j_1}\delta_{\sigma(i_2),j_2}}{ \# S_n} = \frac{\# S_{n-2}}{\#S_n}.
\]
Hence, if $\inf_I \ker \bfi =  \inf_I \ker \bfj$, we have
\begin{align*}
Q(\hat{1})Q(\hat{P_{i(s_1) j(s_1) }}) \dots Q(\hat{P_{i(s_b)j(s_b)}})Q(\hat{1})%
&= \frac{%
	(\#S_{n-1}/ \#S_n)^2%
	(\#S_{n-2} / \#S_n)^{b-1}%
	}%
	{%
		(\#S_{n-1} / \#S_n)^b%
		}%
		Q(\hat{1})\\
&=	\frac{%
	(\#S_{n-2})^{b-1}%
	}%
	{%
	\# S_n%
	(\# S_{n-1})^{b-2}%
		}Q(\hat{1})
		= \frac{1}{n(n-1)^{b-1}}		Q(\hat{1}).
\end{align*}
It proves the lemma.
\end{proof}

\begin{lem}
	Let $u_{ij} (i, j \in [n]) $ and $p$ be the generaters of ${\mathcal A}[I;n]$.
  Then for any $k \in \N$, $\pi \in I(k)$ and $\bfi, \bfj \in [n]^k$,
  \begin{align}
    \sum_{\substack{\bfr \in [n]^k, \\ \inf_I \ker \bfr = \pi }} %
    u_{\bfr \bfj} p %
    &= \delta( \pi, \inf_I \ker_I \bfj)p. \label{relation ker s row} \\
    \sum_{\substack{\bfs \in [n]^k, \\ \inf_I \ker \bfs = \pi }} %
    u_{\bfi \bfs} p %
    &= \delta(\inf_I \ker \bfi, \pi)p, \label{relation ker s col}
\end{align}
\end{lem}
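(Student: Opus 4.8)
The plan is to obtain both identities from Lemma~\ref{redefine bool_eqg} by M\"obius inversion over the lattice $I(k)$ of interval partitions. I will carry out the argument for the row relation \eqref{relation ker s row}; the column relation \eqref{relation ker s col} follows by the same reasoning applied to the second identity of Lemma~\ref{redefine bool_eqg}.

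First I fix $\bfj \in [n]^k$ and, for each $\rho \in I(k)$, abbreviate $S_{=\rho} := \sum_{\bfr :\, \inf_I \ker \bfr = \rho} u_{\bfr \bfj}\, p$ and $S_{\geq \rho} := \sum_{\bfr :\, \inf_I \ker \bfr \geq \rho} u_{\bfr \bfj}\, p = \sum_{\tau \in I(k),\, \tau \geq \rho} S_{=\tau}$. The point that makes everything work is the elementary equivalence, valid for every interval partition $\rho$, that $\rho \leq \ker \bfr$ if and only if $\rho \leq \inf_I \ker \bfr$; this holds because $\inf_I \ker \bfr$ is by definition the largest interval partition dominated by $\ker \bfr$. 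Hence the condition $\inf_I \ker \bfr \geq \rho$ appearing in $S_{\geq \rho}$ is literally the condition $\rho \leq \ker \bfr$, so Lemma~\ref{redefine bool_eqg}, applied to the interval partition $\rho$, gives $S_{\geq \rho} = \zeta(\rho, \ker \bfj)\, p$.

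Next I invert. From $S_{\geq \pi} = \sum_{\rho \geq \pi} S_{=\rho}$ the dual M\"obius inversion on $I(k)$ yields $S_{=\pi} = \sum_{\rho \geq \pi} \mu_{I(k)}(\pi, \rho)\, S_{\geq \rho}$. Substituting $S_{\geq \rho} = \zeta(\rho, \ker \bfj)\, p$ and using once more that for an interval partition $\rho$ the inequality $\rho \leq \ker \bfj$ is the same as $\rho \leq \inf_I \ker \bfj$, the summation range collapses and I obtain $S_{=\pi} = \big( \sum_{\pi \leq \rho \leq \inf_I \ker \bfj} \mu_{I(k)}(\pi, \rho) \big)\, p$. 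The defining property \eqref{Moebious taking an interval} of the M\"obius function then evaluates this inner sum to $\delta(\pi, \inf_I \ker \bfj)$, which is exactly \eqref{relation ker s row}.

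The step I expect to require the most care is the translation $\{\bfr :\, \rho \leq \ker \bfr\} = \{\bfr :\, \rho \leq \inf_I \ker \bfr\}$ for interval partitions $\rho$: it is precisely what converts the cumulative sums furnished by Lemma~\ref{redefine bool_eqg} into sums indexed inside the lattice $I(k)$, after which the M\"obius calculus is routine. One should also check the boundary case $\pi \not\leq \inf_I \ker \bfj$, but there both the empty interval $[\pi, \inf_I \ker \bfj]$ in $I(k)$ and the Kronecker delta vanish, so \eqref{Moebious taking an interval} covers all cases uniformly.
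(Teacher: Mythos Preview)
Your argument is correct, and it takes a genuinely different route from the paper. The paper proves the lemma by induction on $|\pi|$: at each step it peels off the last block $V_{b+1}$, rewrites the inner sum over $r' \neq r_v$ as the full sum minus the $r'=r_v$ term, and applies the inductive hypothesis to the two resulting partitions $\pi|_{[k]\setminus V_{b+1}}$ and $\tilde\pi = \pi \vee (\singleton^{\otimes(v-1)}\otimes\sqcap\otimes\singleton^{\otimes(k-v-1)})$. Your approach instead packages this entire inclusion--exclusion into a single M\"obius inversion on $I(k)$: once you observe that for $\rho\in I(k)$ the conditions $\rho\le\ker\bfr$ and $\rho\le\inf_I\ker\bfr$ coincide, Lemma~\ref{redefine bool_eqg} hands you all cumulative sums $S_{\ge\rho}$ simultaneously, and the rest is the defining identity \eqref{Moebious taking an interval}. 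Your proof is shorter and more conceptual; the paper's hands-on induction, on the other hand, makes the mechanism explicit and is then recycled almost verbatim in the subsequent lemma for $I_h$, where block parity has to be tracked.
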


\begin{proof}
  We give the proof only for the equation \eqref{relation ker s row};
  the same proof runs for the other.
  The proof is by induction on $| \pi |$.
  In the case $| \pi| = 1$, we have $\pi = {\bf 1}_k$.
  Then for any $\bfr \in [n]^k$, it holds that $\inf_I \ker \bfr = \pi$ if and only if
  $\ker \bfr = {\bf 1}_k$. This gives the  equation \eqref{relation ker s row}.

  Let  $b \in \N$. Assume \eqref{relation ker s row} holds in the case $|\pi| = b$.
  In the case $|\pi| = b + 1$, write $\pi = \{V_1 <  V_2 < \dots < V_{b+1}\}$.
  Set $v = \max V_b$.

  \setlength{\unitlength}{0.5cm}
  \begin{center}
  \begin{picture}(18,3)
  \put(-2, 2){$\pi=$}
  \put(-0.1,2){\uppartiii{1}{1}{2}{4}}
  \put(2.75,2.3){$\ldots$}
  \put(2.5,0.75){$V_1$}
  \put(1,1.4){$1$}

  \put(4,2){\uppartiii{1}{1}{2}{4}}
  \put(6.75,2.3){$\ldots$}
  \put(6.5,0.75){$V_2$}

  \put(9,2.3){$\cdots$}
  \put(9,0.75){$\cdots$}

  \put(9.5,2){\uppartiii{1}{1}{2}{4}}
  \put(12.25,2.3){$\ldots$}
  \put(12,0.75){$V_b$}
  \put(13.5,1.4){$v$}

  \put(13.5,2){\uppartiii{1}{1}{2}{4}}
  \put(16.25,2.3){$\ldots$}
  \put(16,0.75){$V_{b+1}$}
  \put(17.5,1.4){$k$}

  \end{picture}
  \end{center}
  Then  the left hand side of \eqref{relation ker s row} is
  equal to
  \begin{align}\label{temp_1}
    \sum_{\substack{\bfr \in [n]^{[k] \setminus V_{b+1}}, \\ \inf_I \ker \bfr = \pi|_{[k] \setminus V_{b+1}} }} %
     ( u_{r_1 j_1} u_{r_2 j_2} \dots u_{r_v j_v}%
     \sum_{\substack{r' \in [n],\\ r' \neq r_v }} %
     u_{r' j_{v+1}}u_{r' j_{v+2}} \dots u_{r' j_{k}}p).
  \end{align}
  It follows that
  \[
  \sum_{\substack{r' \in [n],\\ r' \neq r_v }} %
  u_{r' j_{v+1}}u_{r' j_{v+2}} \dots u_{r' j_{k}}p%
  = \delta (\ker (\bfj|_{V_{b+1}}) , {\bf 1} _{V_{b+1}} ) p %
  -   u_{r_v j_{v+1}}u_{r_v j_{v+2}} \dots u_{r_v j_{k}}p.
  \]
  Then by the assumption of induction, \eqref{temp_1} is equal to
  \begin{align}\label{temp_2}
    \delta(\pi|_{[k] \setminus V_{b+1}},& \inf_I \ker (\bfj|_{[k] \setminus V_{b+1}}))%
    p%
    \cdot \delta \big( {\bf 1}_{V_{b+1}}, \ker (\bfj |_{V_{b+1}}) \big) - R,
  \end{align}
  where
  \begin{align*}
    R = %
    \sum_{\substack{\bfr \in [n]^{[k] \setminus V_{b+1}}, \\ \inf_I \ker \bfr = \pi|_{[k] \setminus V_{b+1}} }} %
     u_{r_1 j_1} u_{r_2 j_2} \dots u_{r_v j_v}%
     \cdot u_{r_v j_{v+1}}u_{r_v j_{v+2}} \dots u_{r_v j_{k}}p.
  \end{align*}
  For any multi-index $\bfr \in [n]^{[k] \setminus V_{b+1}}$,
  set $\tilde{\bfr} \in [n]^k$ by  $\tilde{\bfr}_m := \bfr_m$ if $m \leq v$, and $\tilde{\bfr}_m := r_v$, otherwise.
  Then $\inf_I \ker \bfr = \pi|_{[k] \setminus V_{b+1}}$ if and only if $\inf_I \ker \tilde{\bfr} = \tilde{\pi}$,
  where $\tilde{\pi} :=   \pi \vee (\singleton^{\otimes (v-1)} \otimes \sqcap \otimes \singleton^{\otimes (k-v-1)}  )$.
  We see that the partition $\tilde{\pi} $ is drown as the following figure.
\setlength{\unitlength}{0.5cm}
\begin{center}
\begin{picture}(18,3)
\put(-2,2){$\tilde{\pi}=$}
\put(-0.1,2){\uppartiii{1}{1}{2}{4}}
\put(2.75,2.3){$\ldots$}
\put(2.5,0.75){$V_1$}
\put(1,1.4){$1$}

\put(4,2){\uppartiii{1}{1}{2}{4}}
\put(6.75,2.3){$\ldots$}
\put(6.5,0.75){$V_2$}

\put(9,2.3){$\cdots$}
\put(9,0.75){$\cdots$}

\put(9.5,2){\uppartiii{1}{1}{2}{4}}
\put(12.25,2.3){$\ldots$}
\put(13.5,1.4){$v$}

\put(12.5,2){\uppartii{1}{1}{2}}
\put(13.5,0.75){$V_b \cup V_{b+1}$}

\put(13.5,2){\uppartiii{1}{1}{2}{4}}
\put(16.25,2.3){$\ldots$}
\put(17.5,1.4){$k$}

\end{picture}
\end{center}
Since $|\tilde{\pi}| = b$, applying the assumption of induction yields
$
R = \delta(\tilde{\pi}, \inf_I \ker \bfj) p.
$
Hence \eqref{temp_2} is equal to
\begin{align*}
\big[\delta &\big(\pi|_{[k]  \setminus V_{b+1}}, \inf_I \ker (\bfj|_{[k] \setminus V_{b+1}})\big)%
\cdot \delta({\bf 1}_{V_{b+1}}, \ker (\bfj |_{V_{b+1}}) )%
- %
\delta(\tilde{\pi}, \inf_I \ker \bfj)\big]
p
= \delta(\pi, \inf_I \ker \bfj)p.
\end{align*}
This is the desired conclusion.
\end{proof}

\begin{prop}\label{prop extension of Haar}
	The functional $h_I$ is a Haar state
	 and the triplet $(\pi_s, L^2(S_n), \hat{1})$ is the GNS-representation of the pair $(\mathcal{A}_p[I;n], h_I)$.

\end{prop}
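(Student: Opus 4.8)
The plan is to prove the single identity $h_I = \omega \circ \pi_s$; both halves of the proposition then follow. Write $g := \omega \circ \pi_s$. Since $\pi_s$ is a $*$-representation of $\mathcal{A}_p[I;n]$ and $\omega$ is a state, $g$ is positive, and $g(p) = \omega(Q(\hat{1})) = \langle \hat{1}, Q(\hat{1})\hat{1}\rangle = 1$, so $g$ is a state. By Proposition~\ref{uniqueness Haar} it suffices to show that $g$ is a unital Haar functional; then $g = h_I$, so $h_I$ is a state, hence a Haar state. To produce the Haar invariance I would invoke Lemma~\ref{Haar inv under prod}, which reduces the verification to the subspace $V_n^D$.

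The key structural observation is that the formula \eqref{explicit_Haar_s} says exactly that $\pi_s(v) = g(v)\,Q(\hat{1})$ for every $v \in V_n^D$: each such $v$ begins and ends with $p$, and $\pi_s(p) = Q(\hat{1}) = |\hat{1}\rangle\langle\hat{1}|$ is rank one, so $Q(\hat{1})\,X\,Q(\hat{1}) = \langle \hat{1}, X\hat{1}\rangle\,Q(\hat{1})$ collapses the whole word to a scalar multiple of $Q(\hat{1})$. From this the product hypothesis of Lemma~\ref{Haar inv under prod} is immediate: for $a,b \in V_n^D$, $g(apb) = \omega\bigl(g(a)Q(\hat{1})\cdot Q(\hat{1})\cdot g(b)Q(\hat{1})\bigr) = g(a)g(b)$, using $Q(\hat{1})^3 = Q(\hat{1})$ and $\omega(Q(\hat{1})) = 1$.

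It remains to check the Haar invariance of $g$ on $V_n^D$. For a generator one has $(\mathrm{id}\otimes g)\Delta(pu_{\bfi\bfj}p) = \sum_{\bfs} pu_{\bfi\bfs}p \cdot g(pu_{\bfs\bfj}p)$. By \eqref{explicit_Haar_s} the scalar $g(pu_{\bfs\bfj}p)$ depends on $\bfs$ only through $\inf_I \ker\bfs$, so I would group the sum according to $\pi := \inf_I \ker\bfs \in I(k)$ and apply the relation \eqref{relation ker s col}, which collapses $\sum_{\inf_I\ker\bfs = \pi} pu_{\bfi\bfs}p = \delta(\inf_I\ker\bfi, \pi)\,p$. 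Only the term $\pi = \inf_I\ker\bfi$ survives, and reading \eqref{explicit_Haar_s} once more identifies the coefficient as $g(pu_{\bfi\bfj}p)$, giving $(\mathrm{id}\otimes g)\Delta(pu_{\bfi\bfj}p) = g(pu_{\bfi\bfj}p)\,p$. The opposite invariance is obtained identically from \eqref{relation ker s row}. By Lemma~\ref{Haar inv under prod}, $g$ is a Haar functional, and uniqueness gives $g = h_I$.

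For the GNS statement, the identity $h_I = \omega\circ\pi_s$ yields $\langle \pi_s(a)\hat{1}, \pi_s(b)\hat{1}\rangle = \omega(\pi_s(a^*b)) = h_I(a^*b)$ for all $a,b$; hence $a \mapsto \pi_s(a)\hat{1}$ annihilates the null space of $h_I$ and descends to an isometry from the GNS space of $(\mathcal{A}_p[I;n], h_I)$ that intertwines the GNS representation with $\pi_s$ and carries the canonical cyclic vector $[1_{\mathcal{A}_p}] = [p]$ to $\hat{1} = \pi_s(p)\hat{1}$. Checking cyclicity of $\hat{1}$ then realizes $(\pi_s, L^2(S_n), \hat{1})$ as the GNS representation. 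The main obstacle is the bookkeeping in the Haar-invariance step, where the partition-indexed weights of \eqref{explicit_Haar_s} must be matched against the algebraic collapse \eqref{relation ker s col}/\eqref{relation ker s row}. An alternative to the uniqueness route is to verify property \eqref{Haar_def} for $g$ directly, i.e.\ to prove $H^{I(k)}_{\bfi\bfj} = \delta(\inf_I\ker\bfi,\inf_I\ker\bfj)/\bigl(n(n-1)^{|\inf_I\ker\bfi|-1}\bigr)$ by computing the projection entries; the route via Lemma~\ref{Haar inv under prod} is preferable precisely because it sidesteps evaluating $H^{I(k)}_{\bfi\bfj}$ in closed form.
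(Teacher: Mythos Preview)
Your proposal is correct and follows essentially the same route as the paper: both set $g=\omega\circ\pi_s$, verify the Haar invariance on $V_n^D$ by combining the explicit formula \eqref{explicit_Haar_s} with the collapse relations \eqref{relation ker s row}/\eqref{relation ker s col}, and then invoke uniqueness (Proposition~\ref{uniqueness Haar}) to conclude $g=h_I$. You are somewhat more careful than the paper in explicitly checking the product hypothesis of Lemma~\ref{Haar inv under prod} and in spelling out the GNS intertwining, but the underlying argument is the same.
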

\begin{proof}
  Our proof starts with the observation that  the functional $\omega \circ \pi_s$ satisfies the Haar invariance on $V_n^s$.
	For any $k \in \N$, $\bfi, \bfj \in [n]^k$,
  \begin{align*}
	 (id \otimes \omega \circ \pi_s)\Delta(pu_{\bfi \bfj}p) &= %
	 \sum_{\bfs \in [n]^k} %
	 pu_{\bfi \bfs} p  \cdot %
  	 \frac{\delta(\inf_I \ker\bfs, \inf_I \ker\bfj ) }{n (n-1)^{| \inf_I \ker\bfs | -1}} \\
	 &=  	 \frac{1 }{n (n-1)^{| \inf_I \ker\bfj | -1}}%
	 \sum_{\substack{\bfs \in [n]^k, \\ \inf_I \ker \bfs = \inf_I \ker \bfj }} %
	 pu_{\bfi \bfs} p.  %
	\end{align*}
  By the equation \eqref{relation ker s row} , we have for any interval partition $\pi \in I(k)$,
	\[
		 \sum_{\substack{\bfs \in [n]^k, \\ \inf_I \ker \bfs = \pi }} %
		 pu_{\bfi \bfs} p %
		 = \delta(\inf_I \ker \bfi, \pi)p.
	\]
  From this, we obtain the half of the Haar invariance of $\omega \circ \pi_s$.
  Similar arguments can be applied to the other invariance.
  By the uniqueness of the Haar functional (Lemma~\ref{uniqueness Haar}) , we have proven the proposition.

\end{proof}

\begin{thm}\label{Haar state h_s existence}
  For any $n \in \N$,  $Beq_s(n)$ and $\mc{B}_s(n)$ admit the unique Haar states.
  We write them $h_s$ and $h_{\mc{B}_s}$, respectively.  Furthermore, we have $h_{\mc{B}_s} \circ \alpha = h_s$.
\end{thm}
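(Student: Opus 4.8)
The plan is to realise both Haar states concretely through the representation on $L^2(S_n)$ already constructed, and then to transport the invariance property along the structure maps $\iota_n$ and $\alpha$. First I would put $h_s := \omega \circ \bar{\pi}_s$ on $Beq_s(n)$ and $h_{\mc{B}_s} := \omega \circ \Pi_s$ on $\mc{B}_s(n)$. Both are positive, since $\omega$ is a state and $\bar{\pi}_s, \Pi_s$ are $*$-homomorphisms, and both are unital because $\bar{\pi}_s(p) = \Pi_s(\mathbf{P}) = Q(\hat{1})$ and $\omega(Q(\hat{1})) = \langle \hat{1}, \hat{1} \rangle = 1$; hence each is a state. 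Since $\Pi_s \circ \alpha$ and $\bar{\pi}_s$ both send $u_{ij} \mapsto Q(\hat{P_{ij}})$ and $p \mapsto Q(\hat{1})$, they coincide, so $\Pi_s \circ \alpha = \bar{\pi}_s$, and therefore $h_{\mc{B}_s} \circ \alpha = \omega \circ \Pi_s \circ \alpha = \omega \circ \bar{\pi}_s = h_s$, which already gives the last assertion of the theorem.

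For the Haar invariance of $h_s$, the key input is Proposition~\ref{prop extension of Haar}, which shows that $h_I = \omega \circ \pi_s$ is a Haar state on $\mathcal{A}_p[I;n]$. Because $\bar{\pi}_s \circ \iota_n = \pi_s$ we get $h_s \circ \iota_n = h_I$, and $\iota_n$ intertwines the coproducts (checked on the generators $u_{ij}, p$ and extended multiplicatively). Expanding $\Delta(a) = \sum_i b_i \otimes c_i$ for $a \in \mathcal{A}_p[I;n]$, a purely algebraic computation gives $(\mathrm{id} \otimes h_s)\Delta(\iota_n(a)) = \sum_i \iota_n(b_i)\, h_I(c_i) = \iota_n\big((\mathrm{id} \otimes h_I)\Delta(a)\big) = \iota_n(h_I(a)p) = h_s(\iota_n(a))\,p$, and symmetrically for $(h_s \otimes \mathrm{id})\Delta$. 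Thus the invariance holds on the dense $*$-subalgebra $\iota_n(\mathcal{A}_p[I;n]) \subset Beq_s(n)$. Since $\Delta$ is a $*$-homomorphism into $Beq_s(n) \otimes_{\mathrm{min}} Beq_s(n)$ and the slice map $\mathrm{id} \otimes h_s$ is bounded, both sides of the invariance identity are norm-continuous, so it extends to all of $Beq_s(n)$ and $h_s$ is a Haar state.

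For $\mc{B}_s(n)$ I would transport the invariance of $h_s$ along $\alpha$. The homomorphism $\alpha$ has dense range, because its image is a $*$-subalgebra containing all the generators $\mathbf{P}U_{\bfi\bfj}\mathbf{P} = \alpha(\iota_n(pu_{\bfi\bfj}p))$ of $\mc{B}_s(n)$; as a $*$-homomorphism of C$^*$-algebras it has closed range, so $\alpha$ is surjective, and it intertwines the coproducts. Writing $b = \alpha(a) \in \mc{B}_s(n)$ and using $h_{\mc{B}_s} \circ \alpha = h_s$ together with the invariance of $h_s$, one obtains $(\mathrm{id} \otimes h_{\mc{B}_s})\Delta(b) = \alpha\big((\mathrm{id} \otimes h_s)\Delta(a)\big) = \alpha(h_s(a)p) = h_{\mc{B}_s}(b)\,\mathbf{P}$, and likewise on the other side; hence $h_{\mc{B}_s}$ is a Haar state. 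Uniqueness of both states then follows from Proposition~\ref{uniqueness Haar}, whose slice-map argument is valid here since the product of two states extends to the minimal tensor product.

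The routine parts are positivity and unitality; the genuine care is needed wherever the minimal tensor product enters. The main obstacle I anticipate is the bookkeeping across $\otimes_{\mathrm{min}}$: confirming that the slice maps $\mathrm{id} \otimes h$ are bounded so that the algebraic invariance survives the passage to the C$^*$-completion, and verifying the intertwining relations $\Delta \circ \iota_n = (\iota_n \otimes \iota_n)\Delta$ and $\Delta \circ \alpha = (\alpha \otimes \alpha)\Delta$ together with the density and surjectivity statements. Everything else reduces to the already-established algebraic Haar invariance of $h_I$.
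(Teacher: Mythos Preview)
Your proof is correct and follows essentially the same route as the paper: both deduce existence from Proposition~\ref{prop extension of Haar} (realising the Haar state as the vector state $\omega$ pulled back through the representation on $L^2(S_n)$) and deduce uniqueness from Proposition~\ref{uniqueness Haar}. The paper's own proof is extremely terse and does not spell out the passage from $\mathcal{A}_p[I;n]$ to $Beq_s(n)$ via $\iota_n$, the transport along $\alpha$ to $\mc{B}_s(n)$, or the continuity and slice-map considerations on the minimal tensor product; your write-up makes these routine but necessary steps explicit, and in particular your treatment of the relation $h_{\mc{B}_s}\circ\alpha = h_s$ and of surjectivity of $\alpha$ is more detailed than what the paper provides.
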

\begin{proof}
  The existence of a Haar state follows immediately from Proposition \ref{prop extension of Haar}. The uniqueness follows from Proposition~\ref{uniqueness Haar}.
\end{proof}

\begin{lem}
 Assume the index $x$ be $o$ or $h$.
	Let $u_{ij} (i, j \in [n]) $ and $p$ be the generaters of ${\mathcal A}[I_x;n]$.
  	Then for any $k \in \N$, $\pi \in I_x(2k)$ and multi-indices  $\bfi, \bfj \in [n]^{2k}$ with
  	$\sqcap^{\otimes k} \leq \ker\bfi, \ker\bfj$,
  	it holds that
  \begin{align}
    \sum_{\substack{\bfr \in [n]^{2k}, \\ \inf_{I_x} \ker \bfr = \pi }} %
    u_{\bfr \bfj} p %
    &= \delta( \pi, \inf_{I_x} \ker_I \bfj)p, \label{relation ker x row} \\
    \sum_{\substack{\bfs \in [n]^{2k}, \\ \inf_{I_x} \ker \bfs = \pi }} %
    u_{\bfi \bfs} p %
    &= \delta(\inf_{I_x} \ker \bfi, \pi)p. \label{relation ker x col}
	\end{align}
\end{lem}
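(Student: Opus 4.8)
The plan is to prove the row identity \eqref{relation ker x row}; the column identity \eqref{relation ker x col} then follows by the mirror-image argument (exchanging the roles of the two indices of the $u_{ij}$), exactly as the second half of the previous lemma runs parallel to \eqref{relation ker s row}. The first thing I would record is that every $\pi \in I_x(2k)$ satisfies $\sqcap^{\otimes k} \leq \pi$. Indeed, the blocks $V_1 < V_2 < \dots$ of $\pi$ are consecutive intervals of even length, so counting from the left each block begins at an odd position and ends at an even one; hence each block is a union of the consecutive pairs $\{1,2\},\{3,4\},\dots$, and $\sqcap^{\otimes k}\leq\pi$. Since $\inf_{I_x}\ker\bfr = \pi$ gives $\sqcap^{\otimes k}\leq\pi=\inf_{I_x}\ker\bfr\leq\ker\bfr$, throughout the sum the index $\bfr$ automatically has the doubled form $r_1=r_2,\ r_3=r_4,\dots$, matching the hypothesis imposed on $\bfj$.

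For $x=o$ the statement is essentially immediate: $I_o(2k)=\{\sqcap^{\otimes k}\}$, so $\pi=\sqcap^{\otimes k}$, and since $\sqcap^{\otimes k}\leq\ker\bfj$ we also have $\inf_{I_o}\ker\bfj=\sqcap^{\otimes k}$. The constraint $\inf_{I_o}\ker\bfr=\sqcap^{\otimes k}$ is equivalent to $\sqcap^{\otimes k}\leq\ker\bfr$, so Lemma~\ref{redefine bool_eqg} applied to the partition $\sqcap^{\otimes k}\in I_o(2k)$ yields $\sum_{\sqcap^{\otimes k}\leq\ker\bfr}u_{\bfr\bfj}p=p=\delta(\sqcap^{\otimes k},\inf_{I_o}\ker\bfj)p$.

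For $x=h$ I would run the same induction on $|\pi|$ used to prove \eqref{relation ker s row}. In the base case $|\pi|=1$ one has $\pi={\bf 1}_{2k}\in I_h(2k)$; since $\inf_{I_h}\ker\bfr={\bf 1}_{2k}$ is equivalent to ${\bf 1}_{2k}\leq\ker\bfr$, Lemma~\ref{redefine bool_eqg} applied to ${\bf 1}_{2k}$ gives the claim at once. For the inductive step write $\pi=\{V_1<\dots<V_{b+1}\}$ with all $|V_\nu|$ even, set $v:=\max V_b$, and split the product at the last block $V_{b+1}=\{v+1,\dots,2k\}$ exactly as before. Because $|V_{b+1}|$ is even it lies in $L_{I_h}$, so Lemma~\ref{redefine bool_eqg} with the one-block partition ${\bf 1}_{V_{b+1}}$ evaluates the inner sum $\sum_{r'}u_{r'j_{v+1}}\dots u_{r'j_k}p$ to $\delta({\bf 1}_{V_{b+1}},\ker(\bfj|_{V_{b+1}}))p$. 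Separating off the diagonal term $r'=r_v$ then splits the expression into a ``non-merging'' contribution, governed by the induction hypothesis on the ground set $[2k]\setminus V_{b+1}$ applied to $\pi|_{[2k]\setminus V_{b+1}}$, and a correction term $R$ in which $V_b$ and $V_{b+1}$ have been glued.

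The crux is the correction term. The glued partition $\tilde\pi:=\pi\vee(\singleton^{\otimes(v-1)}\otimes\sqcap\otimes\singleton^{\otimes(2k-v-1)})$ merges $V_b$ and $V_{b+1}$ into one block of size $|V_b|+|V_{b+1}|$, which is again even, so $\tilde\pi\in I_h(2k)$ and $|\tilde\pi|=b$; the induction hypothesis therefore gives $R=\delta(\tilde\pi,\inf_{I_h}\ker\bfj)p$. Assembling the two pieces reduces the whole identity to the purely combinatorial equation
\[
\delta\big(\pi|_{[2k]\setminus V_{b+1}},\,\inf_{I_h}\ker(\bfj|_{[2k]\setminus V_{b+1}})\big)\,\delta\big({\bf 1}_{V_{b+1}},\ker(\bfj|_{V_{b+1}})\big)-\delta(\tilde\pi,\inf_{I_h}\ker\bfj)=\delta(\pi,\inf_{I_h}\ker\bfj),
\]
which is the same inclusion--exclusion between ``$V_b,V_{b+1}$ separate'' and ``$V_b,V_{b+1}$ merged'' that closes the proof of \eqref{relation ker s row}. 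I expect the main obstacle to be the bookkeeping in this inductive step, in particular checking that the even-block constraint is preserved both under restriction to $[2k]\setminus V_{b+1}$ and under the merge producing $\tilde\pi$, so that both partitions remain in $I_h$ and Lemma~\ref{redefine bool_eqg} stays applicable; by contrast, for $x=o$ no such merge is possible and the argument collapses to the single case $\pi=\sqcap^{\otimes k}$ handled above.
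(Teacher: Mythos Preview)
Your proposal is correct and follows essentially the same approach as the paper: the $x=o$ case is dispatched immediately from $I_o(2k)=\{\sqcap^{\otimes k}\}$, and the $x=h$ case is the same induction on $|\pi|$ with the same split at the last block $V_{b+1}$, the same use of Lemma~\ref{redefine bool_eqg} (available because $|V_{b+1}|$ is even), and the same correction term $R$ governed by the merged partition $\tilde\pi\in I_h(2k)$. Your explicit remark that $\sqcap^{\otimes k}\leq\pi$ for every $\pi\in I_x(2k)$ and that the merge preserves even block sizes is exactly the bookkeeping the paper leaves implicit.
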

\begin{proof}
  We only prove the first equation.
 In the case of $x=o$,
 we have $\pi = \sqcap^{ \otimes k}$ and $\inf_{I_o}\ker \rho = \sqcap^{ \otimes k}$ for any $\rho \in P(2k)$ with $\rho \geq \sqcap^{ \otimes k}$. Hence the first equation follows from the definiton.

 In the case of $x=h$,
 the proof is by induction on $| \pi |$.
  In the case $| \pi| = 1$, we have $\pi = {\bf 1}_{2k}$.
  Then for any $\bfr \in [n]^{2k}$, it holds that $\inf_{I_h} \ker \bfr = \pi$ if and only if
  $\ker \bfr = {\bf 1}_{2k}$. This gives  \eqref{relation ker x row}.

  Let  $b \in \N$. Assume the first equation holds in the case $|\pi| = b$.
  In the case $|\pi| = b + 1$, write $\pi = \{V_1 <  V_2 < \dots < V_{b+1}\}$.
  Set $v = \max V_b$.

  \setlength{\unitlength}{0.5cm}
  \begin{center}
  \begin{picture}(18,3)
	\put(-2,2){$\pi=$}
  \put(0.4,2){\uppartiii{1}{1}{2}{4}}
  \put(4.5,2){\uppartiii{1}{1}{2}{4}}
  \put(10,2){\uppartiii{1}{1}{2}{4}}
  \put(14,2){\uppartiii{1}{1}{2}{4}}

  \put(-0.1,2){\uppartiii{1}{1}{2}{4}}

  \put(2.75,2.3){$\ldots$}
  \put(2.5,0.75){$V_1$}
  \put(1,1.4){$1$}
  \put(1.5,1.4){$2$}

  \put(4,2){\uppartiii{1}{1}{2}{4}}

  \put(6.75,2.3){$\ldots$}
  \put(6.5,0.75){$V_2$}

  \put(9,2.3){$\cdots$}
  \put(9,0.75){$\cdots$}

  \put(9.5,2){\uppartiii{1}{1}{2}{4}}

  \put(12.25,2.3){$\ldots$}
  \put(12,0.75){$V_b$}
  \put(14,1.4){$v$}

  \put(13.5,2){\uppartiii{1}{1}{2}{4}}
  \put(16.25,2.3){$\ldots$}
  \put(16,0.75){$V_{b+1}$}
  \put(18,1.4){$2k$}

  \end{picture}
  \end{center}
%
  Then  the left hand side of \eqref{relation ker x row} is
  equal to
  \begin{align*}
    \sum_{\substack{\bfr \in [n]^{[2k] \setminus V_{b+1}}, \\ \inf_{I_h} \ker \bfr = \pi|_{[k] \setminus V_{b+1}} }} %
     ( u_{r_1 j_1} u_{r_2 j_2} \dots u_{r_v j_v}%
     \sum_{\substack{r' \in [n],\\ r' \neq r_v }} %
     u_{r' j_{v+1}}u_{r' j_{v+2}} \dots u_{r' j_{2k}}p). \label{temp_1 x}
  \end{align*}
 Since $|V_{b+1}|$ is even,  it follows that
  \[
  \sum_{\substack{r' \in [n],\\ r' \neq r_v }} %
  u_{r' j_{v+1}}u_{r' j_{v+2}} \dots u_{r' j_{2k}}p%
  = \delta (\ker (\bfj|_{V_{b+1}}) , {\bf 1} _{V_{b+1}} ) p %
  -   u_{r_v j_{v+1}}u_{r_v j_{v+2}} \dots u_{r_v j_{2k}}p.
  \]
  By the assumption of induction, \eqref{relation ker x row} is equal to
  \begin{align}\label{temp_2 x}
    \delta(\pi|_{[k] \setminus V_{b+1}},& \inf_{I_h} \ker (\bfj|_{[k] \setminus V_{b+1}}))%
    p%
    \cdot \delta \big( {\bf 1}_{V_{b+1}}, \ker (\bfj |_{V_{b+1}}) \big) - R,
  \end{align}
  where
  \begin{align*}
    R = %
    \sum_{\substack{\bfr \in [n]^{[2k] \setminus V_{b+1}}, \\ \inf_{I_h} \ker \bfr = \pi|_{[2k] \setminus V_{b+1}} }} %
     u_{r_1 j_1} u_{r_2 j_2} \dots u_{r_v j_v}%
     \cdot u_{r_v j_{v+1}}u_{r_v j_{v+2}} \dots u_{r_v j_{2k}}p.
  \end{align*}
  For any multi-index $\bfr \in [n]^{[2k] \setminus V_{b+1}}$,
  set $\tilde{\bfr} \in [n]^{2k}$ by  $\tilde{\bfr}_m := \bfr_m$ if $m \leq v$, $\tilde{\bfr}_m := r_v$, otherwise.

  Set  $\tilde{\pi} :=   \pi \vee (\singleton^{\otimes (v-1)} \otimes \sqcap \otimes \singleton^{\otimes (2k-v-1)}  )$.
    The partition $\tilde{\pi} $ can be  drown as the following figure.
    \setlength{\unitlength}{0.5cm}
    \begin{center}
    \begin{picture}(18,3)
  		\put(-2,2){$\tilde{\pi}=$}

    \put(0.4,2){\uppartiii{1}{1}{2}{4}}
    \put(4.5,2){\uppartiii{1}{1}{2}{4}}
    \put(10,2){\uppartiii{1}{1}{2}{4}}
    \put(14,2){\uppartiii{1}{1}{2}{4}}

  	\put(13,2){\uppartii{1}{1}{2}}
    \put(14,0.8){$\cup$}

    \put(-0.1,2){\uppartiii{1}{1}{2}{4}}

    \put(2.75,2.3){$\ldots$}
    \put(2.5,0.75){$V_1$}
    \put(1,1.4){$1$}
    \put(1.5,1.4){$2$}

    \put(4,2){\uppartiii{1}{1}{2}{4}}

    \put(6.75,2.3){$\ldots$}
    \put(6.5,0.75){$V_2$}

    \put(9,2.3){$\cdots$}
    \put(9,0.75){$\cdots$}

    \put(9.5,2){\uppartiii{1}{1}{2}{4}}

    \put(12.25,2.3){$\ldots$}
    \put(12,0.75){$V_b$}
    \put(14,1.4){$v$}

    \put(13.5,2){\uppartiii{1}{1}{2}{4}}
    \put(16.25,2.3){$\ldots$}
    \put(16,0.75){$V_{b+1}$}
    \put(18,1.4){$2k$}

    \end{picture}
    \end{center}
  Then $\inf_{I_h} \ker \bfr = \pi|_{[2k] \setminus V_{b+1}}$ if and only if $\inf_{I_h} \ker \tilde{\bfr} = \tilde{\pi}$.
Since $|\tilde{\pi}| = b$, applying the assumption of induction yields
$
R = \delta(\tilde{\pi}, \inf_{I_h} \ker \bfj) p.
$
Hence \eqref{temp_2 x} is equal to
\begin{align*}
\big[\delta &\big(\pi|_{[k]  \setminus V_{b+1}}, \inf_{I_h} \ker (\bfj|_{[2k] \setminus V_{b+1}})\big)
\cdot \delta({\bf 1}_{V_{b+1}}, \ker (\bfj |_{V_{b+1}}) )%
- %
\delta(\tilde{\pi}, \inf_{I_h} \ker \bfj)\big]
p
= \delta(\pi, \inf_{I_h} \ker \bfj)p.
\end{align*}
This is the desired conclusion.
\end{proof}

Let us construct $*$-representations of $\mathcal{A}[I_o;n]$, $\mathcal{A}[I_h;n]$,
which give us Haar states.
	We set 	a one dimensional projection $R$ and self-adjoint operators $F_i \in M_{n+1}(\C) $ $(i \leq n)$ by the following: for $k, l \leq n+1$,
	\begin{align*}
				R(k,l)= \begin{cases}
					1, \text{ if } k=l=n+1,\\
					0, \text{ otherwise},
				\end{cases} \
		F_i(k,l)= \begin{cases}
			1, \text{ if } (k,l)=(i, n+1), (n+1,i),\\
			0, \text{ otherwise}.
		\end{cases}
	\end{align*}
	For any $i, r \in [n]$ with $i \neq r$ we have
	\begin{align}\label{relations of rep for h, o}
		R F_i^2 = R , \ R F_i F_r = 0.
	\end{align}
	Set $F_{ij} = F_i \otimes F_j$.
		 We set operators
		\begin{align*}
			 P^o &:=  R\otimes R, \ %
			U_{ij}^o := \frac{1}{\sqrt{n}}F_{ij}, \\
			P^h &:= Q(\hat{1}) \otimes  P^o, \ %
			U^h_{ij} := Q(\hat{P_{ij}}) \otimes F_{ij}.
		\end{align*}

\begin{lem}
	The following relations define a $*$-homomorphism $\pi_o \colon  {\mathcal A}[I_o;n] \ra M_{n+1}(\C)$
	and a $*$-homomorphism $\pi_h \colon {\mathcal A}[I_o;n] \ra B(L^2(S_n)) \otimes M_{n+1}(\C)$.
\begin{align*}
	\pi_x(p^x)= P^x, \ \pi_x(u^x_{ij}) = U^x_{ij}.
\end{align*}

\end{lem}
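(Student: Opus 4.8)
The plan is to invoke the universal property of $\mathcal{A}[I_x;n]$ (Definition~\ref{boo_qg}): this $*$-algebra is generated by the self-adjoint $u_{ij}$ and the projection $p$ subject only to the row and column relations imposed for $k\in L_{I_x}$, so to produce a $*$-homomorphism it is enough to exhibit self-adjoint images for the $u_{ij}$ and a projection image for $p$ and to verify those relations; universality then extends the assignment uniquely. Self-adjointness is immediate, since $P^x$ is a tensor product of the projections $R,Q(\hat{1})$ and each $U^x_{ij}$ is a real scalar times a tensor product of the self-adjoint operators $F_i,F_j,Q(\hat{P_{ij}})$. Besides \eqref{relations of rep for h, o} I shall use its adjoint form: taking adjoints of $RF_iF_r=0$ $(i\neq r)$ and $RF_i^2=R$ and relabelling yields $F_aF_bR=\delta_{ab}R$ for all $a,b\in[n]$; iterating $F_i^2R=R$ then gives $F_i^{k}R=R$ for every even $k$.

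For $\pi_o$ we have $L_{I_o}=\{2\}$, so only the $k=2$ relations are at stake. For the row relation,
\[
\sum_{i=1}^n U^o_{ij_1}U^o_{ij_2}P^o
=\frac1n\sum_{i=1}^n (F_i^2R)\otimes(F_{j_1}F_{j_2}R)
=\frac1n\sum_{i=1}^n R\otimes(\delta_{j_1,j_2}R)
=\delta_{j_1,j_2}\,P^o,
\]
and symmetrically $\sum_{j=1}^n U^o_{i_1j}U^o_{i_2j}P^o=\delta_{i_1,i_2}P^o$. These are exactly the defining relations of $\mathcal{A}[I_o;n]$, so $\pi_o$ is a $*$-homomorphism.

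For $\pi_h$, whose domain is $\mathcal{A}[I_h;n]$, one has $L_{I_h}=\{2,4,6,\dots\}$ and the relations must hold for every even $k$. Fixing $i$ and forming $U^h_{ij_1}\cdots U^h_{ij_k}P^h$, the $B(L^2(S_n))$-component is $Q(\hat{P_{ij_1}})\cdots Q(\hat{P_{ij_k}})Q(\hat{1})$. By Proposition~\ref{repsentation pi_s relation}, $Q(\hat{P_{ij_l}})Q(\hat{P_{ij_{l+1}}})=\delta_{j_l,j_{l+1}}Q(\hat{P_{ij_l}})$, so this product telescopes to $\big(\prod_{l=1}^{k-1}\delta_{j_l,j_{l+1}}\big)Q(\hat{P_{ij_1}})Q(\hat{1})$, which vanishes unless $j_1=\dots=j_k$; hence the sum over $i$ vanishes in the ``otherwise'' case. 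When $j_1=\dots=j_k=:j$, Proposition~\ref{repsentation pi_s relation} gives $Q(\hat{P_{ij}})Q(\hat{1})=|\hat{P_{ij}}\rangle\langle\hat{1}|$, while the two $M_{n+1}(\C)$-components reduce, by $F_i^kR=R$ and $F_j^kR=R$ (here the even-block condition is used), to $R\otimes R$. Summing over $i$ and using $\sum_{i=1}^n\hat{P_{ij}}=\hat{1}$,
\[
\sum_{i=1}^n U^h_{ij_1}\cdots U^h_{ij_k}P^h
=\Big(\sum_{i=1}^n|\hat{P_{ij}}\rangle\langle\hat{1}|\Big)\otimes R\otimes R
=|\hat{1}\rangle\langle\hat{1}|\otimes R\otimes R
=P^h .
\]
The column relation is obtained by the symmetric argument, telescoping $Q(\hat{P_{i_1j}})\cdots Q(\hat{P_{i_kj}})$ through the second identity of Proposition~\ref{repsentation pi_s relation} and using $\sum_{j=1}^n\hat{P_{ij}}=\hat{1}$. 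These are precisely the relations defining $\mathcal{A}[I_h;n]$, so $\pi_h$ is a $*$-homomorphism.

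The verifications are computational once the two mechanisms are kept apart, so I expect the only real difficulty to be organizational, in the $\pi_h$ case. The key point to make explicit is that the index-matching is forced entirely by the $Q(\hat{P_{ij}})$ factors via Proposition~\ref{repsentation pi_s relation}, whereas the $M_{n+1}(\C)$ factors contribute only the scalar reduction $F^{k}_{\cdot}R=R$, which holds exactly because $k$ is even; for odd $k$ one would instead get $F_i^kR=F_iR\neq R$, which is precisely why these operators represent $\mathcal{A}[I_h;n]$ rather than $\mathcal{A}[I_s;n]$. I would make sure to display the telescoping for general even $k$ cleanly and to point to where $L_{I_h}=2\N$ enters.
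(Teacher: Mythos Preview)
Your verification is correct and is exactly the direct computation the paper alludes to with ``The proof is straightforward.'' Your correction of the domain of $\pi_h$ to $\mathcal{A}[I_h;n]$ is also right (this is a typo in the statement, as is the codomain of $\pi_o$, which should be $M_{n+1}(\C)^{\otimes 2}$).
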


\begin{proof}
The proof is straightforward.
\end{proof}

\begin{lem}
	Let $l, n \in \N$. If $l$ is odd then for any $\bfi, \bfj \in [n]^l$, we have
	\begin{align}\label{odd vanish}
			\pi_o (p u_{\bfi\bfj}p ) =  \pi_h (p u_{\bfi\bfj}p ) = 0.
	\end{align}
	If $l$ is even and $l=2k$, then for any $\bfi, \bfj \in [n]^{2k}$, we have
\begin{align}\label{compute pi_o}
\pi_o (p u_{\bfi\bfj}p ) &= %
 \zeta(\sqcap^{\otimes k}, \ker\bfi)
\zeta(\sqcap^{\otimes k}, \ker\bfj)
 \frac{1}{n^k} %
 \cdot P_o.\\
 \pi_h (p u_{\bfi\bfj}p ) &= %
 \zeta(\sqcap^{\otimes k}, \ker\bfi)
 \zeta(\sqcap^{\otimes k}, \ker\bfj)
 \delta(\inf_{I_h}\ker\bfi, \inf_{I_h}\ker\bfj) \frac{1}{n (n-1)^{|\inf_{I_h}\ker\bfi| - 1} } \cdot P_h. \label{compute pi_h}
\end{align}

\end{lem}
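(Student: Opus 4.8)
The plan is to reduce both identities to a single computation in $M_{n+1}(\C)$, the ``scalar core'' $RF_{i_1}\cdots F_{i_l}R$, and then to assemble $\pi_o$ and $\pi_h$ from it. Since $\pi_x$ is a $*$-homomorphism sending $p$ to $P^x$, we have $\pi_x(pu_{\bfi\bfj}p)=P^x\pi_x(u_{i_1j_1})\cdots\pi_x(u_{i_lj_l})P^x$, and the tensor form $P^o=R\otimes R$, $U^o_{ij}=n^{-1/2}F_i\otimes F_j$ splits the product across the two legs:
\[
\pi_o(pu_{\bfi\bfj}p)=\frac{1}{n^{l/2}}\bigl(RF_{i_1}\cdots F_{i_l}R\bigr)\otimes\bigl(RF_{j_1}\cdots F_{j_l}R\bigr).
\]
So everything hinges on evaluating the core $RF_{i_1}\cdots F_{i_l}R$.

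First I would compute this core. The two relations in \eqref{relations of rep for h, o} combine to $RF_iF_r=\delta_{ir}R$, which lets one peel consecutive factors off the left in pairs: $RF_{i_1}F_{i_2}\cdots F_{i_l}R=\delta_{i_1i_2}\,RF_{i_3}\cdots F_{i_l}R$. Iterating, when $l=2k$ is even one is left with $\bigl(\prod_{j=1}^{k}\delta_{i_{2j-1},i_{2j}}\bigr)R^2=\zeta(\sqcap^{\otimes k},\ker\bfi)\,R$, since $\prod_j\delta_{i_{2j-1},i_{2j}}$ is precisely the indicator of $\sqcap^{\otimes k}\le\ker\bfi$. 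When $l$ is odd, peeling leaves one unpaired factor sandwiched as $RF_iR$; this vanishes because the explicit matrices give $F_i$ sending the $(n+1)$-st coordinate vector, which spans the range of $R$, to the $i$-th one with $i\le n$, a vector orthogonal to that range. This already yields the odd-degree vanishing \eqref{odd vanish} for both $\pi_o$ and $\pi_h$, and substituting the even value above gives \eqref{compute pi_o} with $n^{-l/2}=n^{-k}$.

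For $\pi_h$ the same splitting applies, now with $P^h=Q(\hat{1})\otimes P^o$ and $U^h_{ij}=Q(\hat{P_{ij}})\otimes F_{ij}$, so that
\[
\pi_h(pu_{\bfi\bfj}p)=\Bigl[Q(\hat{1})Q(\hat{P_{i_1j_1}})\cdots Q(\hat{P_{i_lj_l}})Q(\hat{1})\Bigr]\otimes\Bigl[P^oF_{i_1j_1}\cdots F_{i_lj_l}P^o\Bigr].
\]
The second leg is computed exactly as above and equals $\zeta(\sqcap^{\otimes k},\ker\bfi)\zeta(\sqcap^{\otimes k},\ker\bfj)\,P^o$ (and $0$ for odd $l$). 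The first leg is precisely the quantity of \eqref{explicit_Haar_s}, namely $\dfrac{\delta(\inf_{I_s}\ker\bfi,\inf_{I_s}\ker\bfj)}{n(n-1)^{|\inf_{I_s}\ker\bfi|-1}}Q(\hat{1})$. Multiplying the two legs and using $Q(\hat{1})\otimes P^o=P^h$ gives the claimed formula up to replacing $\inf_{I_s}$ by $\inf_{I_h}$.

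This last replacement is the only nonmechanical point, and I expect it to be the main obstacle. It is enough to show that on the support of the $\zeta$-factors, i.e.\ when $\sqcap^{\otimes k}\le\ker\bfi$, one has $\inf_{I_s}\ker\bfi=\inf_{I_h}\ker\bfi$ (and likewise for $\bfj$); off this support both sides of \eqref{compute pi_h} vanish, so no comparison is needed. Under $\sqcap^{\otimes k}\le\ker\bfi$ each pair $\{2j-1,2j\}$ lies in a single block of $\ker\bfi$, hence in a single block of the largest interval partition $\inf_{I_s}\ker\bfi$ below it; thus every block of $\inf_{I_s}\ker\bfi$ is a union of such pairs and has even size, so $\inf_{I_s}\ker\bfi\in I_h(2k)$. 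Being the largest interval partition below $\ker\bfi$, it is a fortiori the largest even one, i.e.\ $\inf_{I_s}\ker\bfi=\inf_{I_h}\ker\bfi$; here I use the $\vee$-stability of $I_s$ and $I_h$ that makes these infima well defined. Substituting this identification turns the $\pi_h$ expression into \eqref{compute pi_h} and finishes the proof.
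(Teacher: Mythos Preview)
Your proof is correct and follows essentially the same approach as the paper: both reduce the $M_{n+1}(\C)$-legs via the relations \eqref{relations of rep for h, o} (your observation $RF_iF_r=\delta_{ir}R$ packages the pair-peeling compactly), then invoke \eqref{explicit_Haar_s} for the $L^2(S_n)$-leg, and finally use that $\sqcap^{\otimes k}\le\ker\bfi$ forces $\inf_{I_s}\ker\bfi=\inf_{I_h}\ker\bfi$. Your upfront tensor-splitting of $\pi_h$ is a slightly cleaner organization of the same computation the paper carries out more sequentially.
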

\begin{proof}
	The first and the second equations  follow directory from \eqref{relations of rep for h, o}.
	We prove the last equation.
If $i \neq r$, or $j \neq s$, we have
$
P_h U^h_{ij}U^h_{rs} = 0
$
. Hence if $\zeta(\sqcap^{\otimes k}, \ker\bfi) = 0$ or $\zeta(\sqcap^{\otimes k}, \ker\bfj) =0$ then $\pi_h(pu_{\bfi \bfj}) =0$.

Assume $\zeta(\sqcap^{\otimes k}, \ker\bfi) = 1$ and $\zeta(\sqcap^{\otimes k}, \ker\bfj) =1$.
Then $\inf_I \ker \bfi = \inf_{I_h} \ker \bfi$ and
$\inf_I \ker \bfj = \inf_{I_h} \ker \bfj$.
We check that
$
	P_h {U^h_{ij}}^2 = Q(\hat{1})Q(\hat{P_{ij}})^2 \otimes R \otimes R = %
	P_h (Q(\hat{P_{ij}})^2 \otimes 1 \otimes 1)
$
. By \eqref{explicit_Haar_s},
\begin{align*}
\pi_h(pu_{\bfi \bfj}p) &= %
	Q(\hat{1})Q(\hat{P_{i_1 j_1 }})^2 \cdots  Q(\hat{P_{i_{2k-1}j_{2k-1 }}})^2 Q(\hat{1})%
	\otimes R \otimes R \\
	&= \frac{\delta(\inf_I \ker\bfi, \inf_I \ker\bfj ) }{n (n-1)^{| \inf_I \ker\bfi | -1}} Q(\hat{1}) %
	\otimes R \otimes R.\\
	& = \frac{\delta(\inf_{I_h} \ker\bfi, \inf_{I_h} \ker\bfj ) }{n (n-1)^{| \inf_{I_h} \ker\bfi | -1}} P_h.
\end{align*}
This finishes proof.
\end{proof}

\begin{note}
We define  states $\omega_o$  on $M_{n+1}(\C)$  and  $\omega_h$ on $ B(L^2(S_n)) \otimes M_{n+1}(\C)$ by
\begin{align*}
\omega_o:= \frac{tr_{n+1} (P_o \cdot)}{tr_{n+1}P_o}, \
\omega_h:= \frac{\omega \otimes tr_{n+1} (P_h \cdot)}{\omega \otimes tr_{n+1}(P_h)}.
\end{align*}

\end{note}

\begin{prop}
For $x=o,h$, each state $\omega_x \circ \pi_x$ is a Haar state.
Furthermore,
$ 
h_{I_x}= \omega_x \circ \pi_x.
$ 
\end{prop}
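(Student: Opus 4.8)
The plan is to follow the template of Proposition~\ref{prop extension of Haar}: I would show that $\omega_x\circ\pi_x$ is a unital Haar functional and then appeal to the uniqueness in Proposition~\ref{uniqueness Haar}. Since $\omega_x$ is a state and $\pi_x$ a $*$-representation, $\omega_x\circ\pi_x$ is automatically positive, and it is unital on $\mathcal{A}_p[I_x;n]$ because $(\omega_x\circ\pi_x)(p)=\omega_x(\pi_x(p))=1$, the operator $\pi_x(p)$ being a projection of $\omega_x$-weight $1$. Hence, once the Haar invariance is in place, $\omega_x\circ\pi_x$ is a Haar state, and uniqueness of the unital Haar functional forces $h_{I_x}=\omega_x\circ\pi_x$; this simultaneously proves the ``Furthermore'' clause and upgrades the a priori merely linear functional $h_{I_x}$ to a state.

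First I would record the multiplicativity through $p$: for $a,b\in V_n^{I_x}$ one has $(\omega_x\circ\pi_x)(apb)=(\omega_x\circ\pi_x)(a)\,(\omega_x\circ\pi_x)(b)$. This is immediate from the fact that $\pi_x(p)$ is a \emph{rank-one} projection and $\omega_x$ is exactly the vector state of its range: writing $\pi_x(p)=|\xi_x\rangle\langle\xi_x|$, the product $\pi_x(a)\pi_x(p)\pi_x(b)$ equals $|\pi_x(a)\xi_x\rangle\langle\pi_x(b)^*\xi_x|$, whence $(\omega_x\circ\pi_x)(apb)=\langle\xi_x,\pi_x(a)\xi_x\rangle\,\langle\xi_x,\pi_x(b)\xi_x\rangle$. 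By Lemma~\ref{Haar inv under prod} it therefore suffices to verify the Haar invariance of $\omega_x\circ\pi_x$ on the spanning set $V_n^{I_x}$.

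The core step is the identity $(\mathrm{id}\otimes\omega_x\circ\pi_x)\Delta(pu_{\bfi\bfj}p)=(\omega_x\circ\pi_x)(pu_{\bfi\bfj}p)\,p$ together with its mirror. Expanding $\Delta$ yields $\sum_{\bfs}pu_{\bfi\bfs}p\cdot(\omega_x\circ\pi_x)(pu_{\bfs\bfj}p)$, into which I substitute \eqref{odd vanish}, \eqref{compute pi_o} and \eqref{compute pi_h}. For $x=o$ only the factor $\zeta(\sqcap^{\otimes k},\ker\bfs)\zeta(\sqcap^{\otimes k},\ker\bfj)n^{-k}$ appears, so the sum collapses by Lemma~\ref{redefine bool_eqg} applied to $\sqcap^{\otimes k}\in I_o(2k)$, producing exactly $\zeta(\sqcap^{\otimes k},\ker\bfi)\zeta(\sqcap^{\otimes k},\ker\bfj)n^{-k}p$. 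For $x=h$ the surviving indices are those with $\sqcap^{\otimes k}\le\ker\bfs$ and $\inf_{I_h}\ker\bfs=\inf_{I_h}\ker\bfj=:\pi$, and I would evaluate $\sum_{\inf_{I_h}\ker\bfs=\pi}pu_{\bfi\bfs}p$ by relation \eqref{relation ker x col}, recovering the right-hand side of \eqref{compute pi_h}; the mirror identity uses \eqref{relation ker x row}, and odd-length words vanish on both sides by \eqref{odd vanish}.

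I expect the main obstacle to be the bookkeeping in the case $x=h$, and specifically the fact that relation \eqref{relation ker x col} is proved only under the hypothesis $\sqcap^{\otimes k}\le\ker\bfi$, whereas the coproduct sum runs over all $\bfi$. To close this I would argue that when $\sqcap^{\otimes k}\not\le\ker\bfi$ the inner sum $\sum_{\inf_{I_h}\ker\bfs=\pi}pu_{\bfi\bfs}p$ vanishes: every relevant $\pi\in I_h(2k)$ satisfies $\pi\ge\sqcap^{\otimes k}$, and M\"obius inversion over the interval-closed lattice $I_h(2k)$ expresses this sum through the quantities $\sum_{\sigma\le\ker\bfs}pu_{\bfi\bfs}p$ with $\sigma\ge\sqcap^{\otimes k}$, each of which is $0$ by Lemma~\ref{redefine bool_eqg} since $\sigma\le\ker\bfi$ would force $\sqcap^{\otimes k}\le\ker\bfi$. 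This exactly reproduces the prefactor $\zeta(\sqcap^{\otimes k},\ker\bfi)$ in \eqref{compute pi_h}, so both Haar invariance identities hold and the proof concludes by uniqueness as in the first paragraph.
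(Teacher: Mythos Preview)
Your proposal is correct and follows essentially the same route as the paper: compute $(\mathrm{id}\otimes\omega_x\circ\pi_x)\Delta(pu_{\bfi\bfj}p)$ via \eqref{odd vanish}, \eqref{compute pi_o}, \eqref{compute pi_h}, collapse the $\bfs$-sum using the relations of Lemma~\ref{redefine bool_eqg} and \eqref{relation ker x row}/\eqref{relation ker x col}, and conclude by uniqueness of the unital Haar functional. Your treatment is in fact more careful than the paper's in two places---you explicitly supply the rank-one projection argument for multiplicativity through $p$ (needed for Lemma~\ref{Haar inv under prod}) and you close the case $\sqcap^{\otimes k}\not\le\ker\bfi$ by M\"obius inversion, a hypothesis that the paper's invocation of \eqref{relation ker x col} silently assumes.
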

\begin{proof}
	If $l \in \N$ is odd,  by \eqref{odd vanish},
	$ 	(id \otimes \omega \circ \pi_x)\Delta(pu_{\bfi \bfj}p) = %
0= \omega_x(pu_{\bfi \bfj}p)$, where  $\bfi, \bfj \in [n]^l$ and $x=o, h$.

Assume $l \in \N$ is even and set $l=2k$.
By \eqref{compute pi_o} and  \eqref{compute pi_h}, we have
\begin{align*}
		(id \otimes \omega_o \circ \pi_o)\Delta(pu_{\bfi \bfj}p)&= %
		  	\sum_{\bfs \in [n]^{2k}} %
		  	pu_{\bfi \bfs} p  \cdot %
		  		\zeta(\sqcap^{\otimes k}, \ker\bfs)
		  		\zeta(\sqcap^{\otimes k}, \ker\bfj)
				\frac{1}{n^k} \\
		  	&=\frac{\zeta(\sqcap^{\otimes k}, \ker\bfj)}{n^k} %
		  	\sum_{\substack{\bfs \in [n]^k, \\ 	\sqcap^{\otimes k} \leq \ker\bfs}} %
		  	pu_{\bfi \bfs} p.  \\%
	  	(id \otimes \omega_h \circ \pi_h)\Delta(pu_{\bfi \bfj}p) &= %
	  	\sum_{\bfs \in [n]^{2k}} %
	  	pu_{\bfi \bfs} p  \cdot %
	  	 \delta(\inf_{I_h}\ker\bfs, \inf_{I_h}\ker\bfj) %
	  	 \frac{	  	 \zeta(\sqcap^{\otimes k}, \ker\bfs)
	  	 	\zeta(\sqcap^{\otimes k}, \ker\bfj)
	  	 	}{n (n-1)^{|\inf_{I_h}\ker\bfs| - 1} } \\
	  	 	&=  		 \frac{  	 \zeta(\sqcap^{\otimes k}, \ker\bfj) }{n (n-1)^{| \inf_{I_h} \ker\bfj | -1}}%
	  	\sum_{\substack{\bfs \in [n]^k, \\ \inf_{I_h} \ker \bfs = \inf_{I_h} \ker \bfj }} %
	  	pu_{\bfi \bfs} p.  %
	  \end{align*}
By \eqref{relation ker x row}, we obtain the half of the Haar invariance of $\omega_x \circ \pi_x$ $(x=o,h)$.
Similar arguments can be applied to the other invariance.
By the uniqueness of the Haar functional (Lemma~\ref{uniqueness Haar}), we have proven the proposition.
\end{proof}

\begin{thm}
  For any $n \in \N$,  $Beq_o(n)$ and $Beq_h(n)$ admit the unique Haar states.
  We write them $h_o$ and $h_{h}$, respectively.  In particular, we have $h_o \circ \iota_n = h_{I_o}$ and  $h_h \circ \iota_n = h_{I_h}$.
\end{thm}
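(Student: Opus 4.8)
The plan is to mirror the proof of Theorem~\ref{Haar state h_s existence} for the index $s$, using the concrete representations $\pi_o$ and $\pi_h$ in place of $\pi_s$. Uniqueness is the easy half: since $Beq_o(n)$ and $Beq_h(n)$ are unital $*$-algebras equipped with the coproduct $\Delta$, Proposition~\ref{uniqueness Haar} shows that each admits at most one unital Haar functional, hence at most one Haar state. So the real content is existence together with the identification $h_o\circ\iota_n = h_{I_o}$ and $h_h\circ\iota_n = h_{I_h}$.

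For existence I would proceed as follows. The preceding proposition realizes the Haar functional as $h_{I_x} = \omega_x\circ\pi_x$, where $\pi_x$ is the $*$-representation of $\mathcal{A}[I_x;n]$ on $M_{n+1}(\C)$ (for $x=o$), resp.\ on $B(L^2(S_n))\otimes M_{n+1}(\C)$ (for $x=h$), and $\omega_x$ is the vector state associated with the unit vector spanning the range of the projection $P^x=\pi_x(p)$. Exactly as in the construction of $\bar{\pi}_s$ from $\pi_s$, one checks that the normalization $\|(\pi_x(u_{ij}))_{ij}\|_n\le 1$ makes $\pi_x$ compatible with the universal C$^*$-norm defining $Beq_x(n)$, so that it descends to a $*$-representation $\bar{\pi}_x$ of $Beq_x(n)$ with $\bar{\pi}_x\circ\iota_n=\pi_x$ on $\mathcal{A}_p[I_x;n]$. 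I would then set $h_x:=\omega_x\circ\bar{\pi}_x$. Being a vector state of a $*$-representation, $h_x$ is automatically a state on the C$^*$-algebra $Beq_x(n)$, and by construction $h_x\circ\iota_n=\omega_x\circ\pi_x=h_{I_x}$, which already gives the stated identification.

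It remains to see that $h_x$ is a Haar state. Here I would use that $\iota_n(\mathcal{A}_p[I_x;n])$ is norm-dense in $Beq_x(n)$, that $\Delta$ is a norm-continuous $*$-homomorphism, and that $h_{I_x}$ satisfies the Haar invariance~\eqref{inv Haar} on $\mathcal{A}_p[I_x;n]$; both invariance identities then extend from the dense subalgebra to all of $Beq_x(n)$ by continuity. In fact one can short-circuit the bookkeeping: once a Haar state $h_x$ on $Beq_x(n)$ is produced at all, its pullback $h_x\circ\iota_n$ is a unital Haar functional on $\mathcal{A}_p[I_x;n]$, and the uniqueness in Proposition~\ref{uniqueness Haar} forces $h_x\circ\iota_n=h_{I_x}$ for free.

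The main obstacle is the passage from the purely $*$-algebraic Haar functional $h_{I_x}$ to an honest state on the C$^*$-algebra $Beq_x(n)$, i.e.\ the well-definedness of $\bar{\pi}_x$: one must verify that the representation $\pi_x$, which is tailored to reproduce the interval-partition entries $H^{D(k)}_{\bfi\bfj}$, is genuinely bounded for the universal norm of $Beq_x(n)$, so that it factors through the C$^*$-completion. Once this boundedness is in hand, positivity of $h_x$ is immediate (it is a vector state of $\bar{\pi}_x$), and the verification of the Haar invariance is routine. Thus the only step requiring care beyond the algebraic Haar-functional theorem is this C$^*$-level descent of $\pi_o$ and $\pi_h$, which is carried out precisely as in the $s$-case passage from $\pi_s$ to $\bar{\pi}_s$.
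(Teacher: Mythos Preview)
Your proposal is correct and follows essentially the same approach as the paper: the paper's proof is the single sentence ``As $\|U^x_{ij}\|_n \leq 1$, we can extend $\pi_x$ to $Beq_x$ $(x=o,h)$, which proves the theorem,'' and your outline simply unpacks this, making explicit the descent of $\pi_x$ to $\bar{\pi}_x$, the definition $h_x=\omega_x\circ\bar{\pi}_x$ as a vector state, the identification $h_x\circ\iota_n=h_{I_x}$, and the passage of the Haar invariance by density. Your identification of the norm bound $\|(\pi_x(u_{ij}))_{ij}\|_n\le 1$ as the only nontrivial step matches exactly what the paper isolates.
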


\begin{proof}
As $||U^x_{ij}||_n \leq 1$, we can extend $\pi_x$ to $Beq_x$ $(x=0,h)$, which proves the theorem.
\end{proof}

\section{Boolean De Finetti theorems}

Let  $(M, \varphi)$ be a pair of a von Neumann algebra and a normal state with faithful GNS-representation and consider an infinite sequence $(x_j)_{j \in \N}$ of self-adjoint elements  $x_j \in M$.
We may assume $M \subset B(H)$,   and $\varphi$ is implemented by $\Omega \in H$, which is a cyclic vector for $M$. %
Throughout this section we suppose $\mathrm{ev}_x(\ms P_{\infty}^o)$ is $\sigma$-weakly dense in $M$, where  $\mathrm{ev}_x$ is the evaluation map
(see Notation \ref{note_intro}. for the definition).

\subsection{Combinatorial part} \hfill

At first we show the purely combinatorial part of Boolean de Finetti theorems.
\begin{prop}\label{easy_part}
	Assume $D$ be a blockwise category of interval partitions.
	Let $E \colon M \ra N$ be a $\varphi$-preserving conditional expectation.
	Suppose $(x_j)_{j \in J}$ are Boolean independent and identically distributed over  $(E, N)$,  and
	$
	K^E_k [x_1, x_1, \dotsc, x_1] = 0,
	$
	for all $k  \in \N \setminus L_D$.
	Then $(x_j)_{j \in \N}$ is  $\mathcal{A}_p[D]$-invariant.
\end{prop}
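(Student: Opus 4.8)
\medskip
The plan is to verify the invariance identity on monomials, since both $\Psi_n$ and $\varphi\circ\mathrm{ev}_x\otimes p$ are linear. Fix $n$ and a monomial $X_{j_1}\dotsb X_{j_k}$ with $\bfj\in[m]^k$ for some $m\geq n$. Unwinding $\Psi_n=(\mathrm{id}\otimes r_{nm})\circ\Lambda_m$ and the definition of $\Lambda_m$, the claim to be proven is
\[
\sum_{\bfi\in[m]^k}\varphi(x_{i_1}\dotsb x_{i_k})\,r_{nm}(p u_{\bfi\bfj}p)=\varphi(x_{j_1}\dotsb x_{j_k})\,p^{(n)}.
\]
First I would expand the moments on the left. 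Since $E$ is $\varphi$-preserving and $(x_j)_{j\in\N}$ are Boolean independent and identically distributed over $(E,N)$, Theorem~\ref{thm_indep} with $b_0=\dotsb=b_k=1_A$ gives, for every multi-index $\bfi$,
\[
\varphi(x_{i_1}\dotsb x_{i_k})=\varphi\big(E[x_{i_1}\dotsb x_{i_k}]\big)=\sum_{\substack{\pi\in I(k)\\ \pi\leq\ker\bfi}}c_\pi,\qquad c_\pi:=\varphi\big(K_\pi^E[x_1,\dotsc,x_1]\big),
\]
where $c_\pi$ depends only on $\pi$ because the $x_j$ are identically distributed.

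The key step is to show that the support of this sum is exactly $D(k)$. By block multiplicativity of the Boolean cumulants (Definition~\ref{def_cumulant}), $K_\pi^E[x_1,\dotsc,x_1]=\prod_{V\in\pi}^{\ra}K_{|V|}^E[x_1,\dotsc,x_1]$, so the hypothesis $K_r^E[x_1,\dotsc,x_1]=0$ for $r\in\N\setminus L_D$ forces $c_\pi=0$ unless every block $V$ of $\pi$ satisfies $|V|\in L_D$. Now block-stability (D1) says that every $\pi\in D(k)$ has all its blocks of size in $L_D$, while $\otimes$-stability (Definition~\ref{cat_interval}(1)) says that an interval partition all of whose blocks have size in $L_D$ is the tensor product ${\bf 1}_{|V_1|}\otimes\dotsb\otimes{\bf 1}_{|V_b|}$ of one-block partitions ${\bf 1}_{|V|}\in D(|V|)$, hence lies in $D(k)$. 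Therefore $\{\pi\in I(k):|V|\in L_D\ \forall V\in\pi\}=D(k)$, and I obtain $\varphi(x_{i_1}\dotsb x_{i_k})=\sum_{\pi\in D(k),\ \pi\leq\ker\bfi}c_\pi$.

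Substituting this into the left-hand side, exchanging the two summations, and applying the partition-indexed relations of Lemma~\ref{redefine bool_eqg} inside $\mathcal{A}[D;m]$ (valid for $\pi\in D(k)$ by the same induction on $|\pi|$ from the defining relations of $\mathcal{A}[D;m]$) gives
\[
\sum_{\pi\in D(k)}c_\pi\sum_{\substack{\bfi\in[m]^k\\ \pi\leq\ker\bfi}}p u_{\bfi\bfj}p=\sum_{\pi\in D(k)}c_\pi\,\zeta(\pi,\ker\bfj)\,p^{(m)}.
\]
Applying $r_{nm}$, which sends $p^{(m)}$ to $p^{(n)}$, and recognizing $\sum_{\pi\in D(k),\ \pi\leq\ker\bfj}c_\pi=\varphi(x_{j_1}\dotsb x_{j_k})$ by the formula of the first paragraph, yields the right-hand side and settles the monomial case; linearity then finishes the proof. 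The only genuinely delicate point is the identification of the support of the cumulant expansion with $D(k)$, which is exactly where both stability axioms of a blockwise category are used; the bookkeeping with $r_{nm}$ for monomials whose variable indices exceed $n$ is routine once the collapse lemma is applied in $\mathcal{A}[D;m]$ before restricting to index $n$.
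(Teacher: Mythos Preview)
Your proof is correct and follows the same route as the paper: expand the moments via the Boolean cumulant formula, use the vanishing hypothesis together with the stability axioms to restrict the sum to $D(k)$, and then collapse the index sum via Lemma~\ref{redefine bool_eqg}. You are in fact more careful than the paper's own proof on two points---the explicit justification that the surviving partitions lie in $D(k)$ (and hence that Lemma~\ref{redefine bool_eqg} applies, though it is stated there only for $I_x$) and the $r_{nm}$ bookkeeping for monomials with variable indices exceeding $n$---but the overall structure is identical.
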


\begin{proof}
	By the moments-cumulants formula, we have for any $\bfj \in [n]^k$ and $k \in \N$,
	\begin{align*}
	(\varphi \circ \mathrm{ev}_x \otimes \mathrm{id} )\circ \Psi_n (X_{j_1} \dotsb X_{j_k})
	& = \sum_{\bfi \in [n]^k }\varphi(x_{i_1} \dotsb x_{i_k}) \otimes pu_{i_1j_1} \dotsb u_{i_kj_k}p \\
	& = \sum_{\bfi \in [n]^k }\sum_{\substack{\pi \in D(k) \\ \pi \leq \ker\bfi}}%
	K^{(\pi)}_E[x_1, \dotsc, x_1] \otimes pu_{i_1j_1} \dotsb u_{i_kj_k}p \\
	& = \sum_{\pi \in D(k)} K^{(\pi)}_E[x_1, \dotsc, x_1] \otimes %
	\sum_{\substack{\bfi \in [n]^k \\ \pi \leq \ker\bfi}} p u_{i_1j_1} \dotsb u_{i_kj_k} p \\
	& = \sum_{\substack{\pi \in D(k) \\ \pi \leq \ker\bfj}}K^{(\pi)}_E[x_1, \dotsc, x_1] \otimes p \\
	& = \varphi \circ \mathrm{ev}_x (X_{j_1} \dotsb X_{j_k}) \otimes p.
	\end{align*}
\end{proof}

\subsection{Observations on the conditional expectations }\hfill

To prove the opposite direction, we observe properties of the conditional expectations.
Throughout this section, we assume  $D$ is a blockwise category of interval partitions.

\begin{note}\hfill

\begin{enumerate}
	\item Denote by $\ms P_\infty^{o, \Psi_n}$ the fixed point algebra of the coaction $\Psi_n$, that is,
	\begin{align*}
	\ms P_\infty^{o, \Psi_n} := \{ f \in \ms P_{\infty}^o \mid \Psi_n(f) = f \otimes p \}.
	\end{align*}
	\item Define a linear map $E_n \colon \ms P_{\infty}^o \ra \ms P_{\infty}^o$ by  $ E_n :=  (\mathrm{id} \otimes h) \circ \Psi_n$.
	\item For $\pi \in P(k)$, we set
  \begin{align*}
  X_\pi := \sum_{\bfj \in [n]^k, \ \pi \leq \ker\bfj } X_{j_1} \dotsb X_{j_k}.
  \end{align*}

\end{enumerate}
\end{note}

\begin{prop}
The following hold:
  \begin{enumerate}
  \item
  $\Psi_n$ is $\ms P_\infty^{o, \Psi_n}$-$\ms P_\infty^{o, \Psi_n}$ bilinear map : for each $f \in \ms P_\infty^{o, \Psi_n}$ and  $g \in \ms P_\infty^o$,
    \begin{align*}
    \Psi_n(fg) = (f \otimes \mathrm{id}) \Psi_n(g),  \ \Psi_n(gf) = \Psi_n(g)(f \otimes \mathrm{id}).
    \end{align*}
  \item
  $\mathcal{E}_n$ is a conditional expectation with respect to the embedding $\ms P_\infty^{o, \Psi_n} \hra \ms P_{\infty}^o$.
  \end{enumerate}
\end{prop}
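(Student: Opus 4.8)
The plan is to regard (1) as the substantive statement and to obtain (2) from it by the standard coaction--Haar mechanism, so I would prove (1) first. Since both of the asserted identities are linear in $g$, I would first reduce to the case $g=X_{b_1}\dotsb X_{b_s}$, a single monomial (write its index word $\bfb$). Next, because $\Psi_n$ respects the grading of $\ms P^o_\infty$ by degree — the maps $\Lambda_m$ send a degree-$k$ monomial to a combination of degree-$k$ monomials and $r_{nm}$ acts only on the algebra factor — I would reduce to $f$ homogeneous of some degree $k$. Writing $\Psi_n(f)=\sum_{\bfi} X_{\bfi}\otimes p A_{\bfi} p$ with $A_{\bfi}:=\sum_{w}\hat f(w)\,u_{\bfi w}$ (here $\hat f(w)$ is the coefficient of the monomial $X_w$ in $f$, $u_{\bfi w}=u_{i_1w_1}\dotsb u_{i_kw_k}$, and $r_{nm}$ is understood), the hypothesis $\Psi_n(f)=f\otimes p$ becomes the family of relations $pA_{\bfi}p=\hat f(\bfi)\,p$.

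I would then expand $\Psi_n(fg)$ and $(f\otimes\mathrm{id})\Psi_n(g)$ directly from the definition of $\Lambda_m$ and compare the coefficients of the linearly independent monomials $X_{\bfi}X_{\bfj}$ in the first tensor leg. The identity $\Psi_n(fg)=(f\otimes\mathrm{id})\Psi_n(g)$ then reduces to the single-sided contraction $pA_{\bfi}=\hat f(\bfi)\,p$, and the companion $\Psi_n(gf)=\Psi_n(g)(f\otimes\mathrm{id})$ reduces to $A_{\bfi}p=\hat f(\bfi)\,p$. To establish these, I would use that the homogeneous fixed vectors supported on the first $n$ variables are spanned by the $X_\pi:=\sum_{\pi\leq\ker w}X_w$ with $\pi\in D(k)$ — the polynomial counterpart of $\mathrm{Fix}(\Lambda_n^k)=\mathrm{Span}\{T_\pi\}$ in \eqref{fix} — together with the inert variables $X_j$ ($j>n$), which satisfy $\Psi_n(X_j)=X_j\otimes p$ outright. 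For $f=\sum_\pi c_\pi X_\pi$ one has $A_{\bfi}=\sum_\pi c_\pi\sum_{w:\pi\leq\ker w}u_{\bfi w}$, and the right contraction $A_{\bfi}p=\hat f(\bfi)p$ is then exactly Lemma~\ref{redefine bool_eqg}, since $\sum_{w:\pi\leq\ker w}u_{\bfi w}p=\zeta(\pi,\ker\bfi)p$ while $\hat f(\bfi)=\sum_\pi c_\pi\zeta(\pi,\ker\bfi)$. The left contraction $pA_{\bfi}=\hat f(\bfi)p$ is where the genuinely new input enters: I would take the adjoint of Lemma~\ref{redefine bool_eqg}, using that the generators $u_{ij}$ and $p$ are self-adjoint, which produces the reversed-index relation on the \emph{left} of $p$, and then read it back via the order-reversal symmetry of the partitions (immediate for each of $I_s,I_o,I_h,I_b$). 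The inert variables and the mixed terms are handled by the same bookkeeping.

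Granting (1), part (2) follows from the general argument. Writing $\mathcal E_n=(\mathrm{id}\otimes h)\circ\Psi_n$ with $h=h_D$ the Haar functional, and using the coaction identity $(\Psi_n\otimes\mathrm{id})\Psi_n=(\mathrm{id}\otimes\Delta)\Psi_n$ together with the Haar invariance $(\mathrm{id}\otimes h)\Delta=h(\cdot)p$ of Definition~\ref{definition Haar} (whose unitality gives $h(p)=1$), a direct computation yields $\Psi_n(\mathcal E_n f)=\mathcal E_n(f)\otimes p$, so $\mathcal E_n$ lands in $\ms P^{o,\Psi_n}_\infty$, while for $f$ already fixed $\mathcal E_n(f)=(\mathrm{id}\otimes h)(f\otimes p)=f$; hence $\mathcal E_n$ is an idempotent onto $\ms P^{o,\Psi_n}_\infty$ restricting to the identity on it. The bimodule property $\mathcal E_n(fg)=f\,\mathcal E_n(g)$ and $\mathcal E_n(gf)=\mathcal E_n(g)\,f$ for fixed $f$ is then immediate from (1) after applying $(\mathrm{id}\otimes h)$, and positivity follows from $\Psi_n$ being $*$-preserving together with $h$ being a state. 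The main obstacle throughout is (1): because of the Boolean projections $p$, the coaction $\Psi_n$ is not multiplicative — concatenating two monomials produces a spurious interior $p$ — so bilinearity over the fixed-point algebra is not formal, and in particular the left-sided contraction must be extracted from the \emph{adjoints} of the defining relations rather than from the relations themselves.
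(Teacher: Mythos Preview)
Your approach is essentially the paper's: identify the fixed-point algebra via \eqref{fix}, verify the bimodule identity on the generators $X_\pi$ using Lemma~\ref{redefine bool_eqg}, and derive (2) from (1) by the coaction--Haar mechanism $(\Psi_n\otimes\mathrm{id})\Psi_n=(\mathrm{id}\otimes\Delta)\Psi_n$ together with $(\mathrm{id}\otimes h)\Delta=h(\cdot)p$. You are in fact more explicit than the paper on two points it glosses over: the left-sided relation $pA_{\bfi}=\hat f(\bfi)p$ genuinely needs the adjoint of Lemma~\ref{redefine bool_eqg} plus the reversal symmetry of interval partitions (the paper just writes ``symmetric proof''), and you correctly flag the inert variables $X_j$ with $j>n$, which the paper's stated description $\ms P_\infty^{o,\Psi_n}=\mathrm{Span}\{X_\pi\}$ omits.

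One small gap: your positivity argument for $\mathcal E_n$ does not go through as written. Because $\Psi_n$ is \emph{not} multiplicative (precisely the interior-$p$ obstruction you identify), one does not have $\Psi_n(f^*f)=\Psi_n(f)^*\Psi_n(f)$, so ``$h$ a state'' does not directly yield $\mathcal E_n(f^*f)\geq 0$. The paper's own proof also omits this point entirely; in practice positivity is only used after passing to the von Neumann level (Lemma~\ref{Jones projection}), where $\mathcal E_n$ becomes the compression $e_n(\cdot)e_n$ and positivity is automatic.
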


\begin{proof}
By \eqref{fix}, it follows that $\ms P_\infty^{o, \Psi_n} = \mathrm{Span}\{ X_\pi \in \ms P_\infty^o \mid \pi \in D(k), k \in \N \}$.
For any $\bfj \in [n]^k, \pi \in D(l)$ and $k, l \in \N$,
\begin{align*}
\Psi_n(X_{i_1} \dotsb X_{i_k} X_\pi  ) = \Psi_n(X_{i_1} \dotsb X_{i_k})(X_\pi \otimes \mathrm{id})
\end{align*}
by the direct computation.
The symmetric proof shows   $\Psi_n$ is a $\ms P_\infty^{o, \Psi_n}$-$\ms P_\infty^{o, \Psi_n}$ bilinear map.

Next, we prove that $\mathcal{E}_n$ is a conditional expectation. $\mathcal{E}_n$ is also $\ms P_\infty^{o, \Psi_n}$-$\ms P_\infty^{o, \Psi_n}$ bilinear map since so is $\Psi_n$. Clearly we have $\mathcal{E}_n[f] = (id \otimes h)(f \otimes p) = f$ for any  $f \in \ms P_\infty^{o, \Psi_n}$.
The proof is completed by showing that $\Psi_n \circ \mathcal{E}_n = \mathcal{E}_n[\cdot] \otimes p$.
Let $\nu$ be the natural isomorphism $V^D_n \otimes \ms \C \ra V^D_n$. Then
  \begin{align*}
  \Psi_n \circ \mathcal{E}_n[f] %
  = (\mathrm{id} \otimes \nu) \circ (\Psi_n \circ \mathrm{id}) \circ (\mathrm{id} \otimes h) \circ \Psi_n
  = (\mathrm{id} \otimes \nu) \circ (\mathrm{id} \otimes \mathrm{id} \otimes h) \circ (\Psi_n \otimes \mathrm{id}) \circ \Psi_n.
  \end{align*}
As $\Psi_n$ is a linear coaction, the right-hand side is equal to
  $
  (\mathrm{id} \otimes \nu) \circ (\mathrm{id} \otimes \mathrm{id} \otimes h) \circ (\mathrm{id} \otimes \Delta) \circ \Psi_n.
  $
By the invariance property of the Haar functional $h$, this is equal to
  $
  (\mathrm{id} \otimes \nu) \circ \iota \circ (\mathrm{id} \otimes h) \circ \Psi_n,
  $
where $\iota$ is the embedding $\ms P_\infty^o \otimes \C \hra \ms P_\infty^o \otimes V^D_n \otimes \C$; $\iota(f \otimes \lambda) = f \otimes p \otimes \lambda$.
By the easy computation, this is equal to $\mathcal{E}_n[\ \cdot \ ] \otimes p$.
\end{proof}

Using the invariance of the joint distribution,  we see that the conditional expectation is connected with the $L^2$-conditional expectation.
\begin{lem}\label{Jones projection}
Suppose $(x_j)_{j \in \N}$ is $\mathcal{A}_p[D]$-invariant for  a blockwise category $D$ of interval partitions, or
 $Beq_x$-invariant for $x=s,o,h$.
  Then  $\mathcal{E}_n$ preserves $\varphi \circ \mathrm{ev}_x$ for any $n \in \N$.
  Moreover for any $f \in \ms P_{\infty}^o$, we have
  \begin{align*}
  e_n \mathrm{ev}_x(f) e_n = \mathrm{ev}_x(\mathcal{E}_n(f)) e_n,
  \end{align*}
  where $e_n$ is the orthogonal projection onto $\overline{\mathrm{ev}_x(\ms P^{\Psi_n}) \Omega}$.
\end{lem}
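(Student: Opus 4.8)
The plan is to prove the two assertions separately, reducing both to algebraic properties of $\mathcal{E}_n$ already in hand, namely that $\mathcal{E}_n$ is a $\ms P_\infty^{o,\Psi_n}$-bimodule conditional expectation (the preceding proposition) together with the invariance hypothesis and the unitality $h(p)=1$ of the Haar functional.

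\textbf{Preservation of $\varphi\circ\mathrm{ev}_x$.} Writing $\phi:=\varphi\circ\mathrm{ev}_x$, for $f\in\ms P_\infty^o$ I would unfold
\[
\phi(\mathcal{E}_n(f)) = (\phi\otimes h)\big(\Psi_n(f)\big) = h\big((\phi\otimes\mathrm{id})\Psi_n(f)\big).
\]
Under $\mathcal{A}_p[D]$-invariance the inner expression equals $\phi(f)\,p$, and since $h(p)=1$ this gives $\phi(\mathcal{E}_n(f))=\phi(f)$. For the $Beq_x$-invariance hypothesis ($x=s,o,h$) I would first use the compatibility $h_x\circ\iota_n=h_{I_x}$ of the Haar states to rewrite $\mathcal{E}_n=(\mathrm{id}\otimes h_x)\circ\Phi_n$, and then run the identical computation with $\Phi_n,h_x$ and $h_x(p)=1$ in place of $\Psi_n,h$. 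This is precisely where the restriction to $x=s,o,h$ enters, as no Haar state on $Beq_b$ is available.

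\textbf{The $L^2$-identity.} The core step is the vector identity
\[
e_n\,\mathrm{ev}_x(f)\Omega = \mathrm{ev}_x(\mathcal{E}_n(f))\Omega \qquad (f\in\ms P_\infty^o),
\]
which I denote $(\star)$. Since $\mathcal{E}_n(f)\in\ms P_\infty^{o,\Psi_n}$, the right-hand side already lies in $K:=\overline{\mathrm{ev}_x(\ms P_\infty^{o,\Psi_n})\Omega}=\mathrm{ran}\,e_n$, so $(\star)$ follows once I check $\mathrm{ev}_x(f)\Omega-\mathrm{ev}_x(\mathcal{E}_n(f))\Omega\perp K$. Testing against the dense family $\mathrm{ev}_x(g)\Omega$, $g\in\ms P_\infty^{o,\Psi_n}$, and expanding the inner product via the $*$-homomorphism $\mathrm{ev}_x$ and $\varphi(y)=\langle y\Omega,\Omega\rangle$, the required orthogonality becomes $\phi(g^*f)=\phi(g^*\mathcal{E}_n(f))$. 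This in turn follows from the preservation property above and the left module property of $\mathcal{E}_n$ over the (self-adjoint) fixed-point algebra:
\[
\phi(g^*f)=\phi\big(\mathcal{E}_n(g^*f)\big)=\phi\big(g^*\mathcal{E}_n(f)\big),
\]
the last equality holding because $g^*\in\ms P_\infty^{o,\Psi_n}$.

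\textbf{From vectors to the operator identity.} To upgrade $(\star)$, note that both $e_n\,\mathrm{ev}_x(f)e_n$ and $\mathrm{ev}_x(\mathcal{E}_n(f))e_n$ annihilate $K^\perp$, so it suffices to compare them on $K$, and by boundedness on the dense vectors $\mathrm{ev}_x(g)\Omega$ with $g\in\ms P_\infty^{o,\Psi_n}$. There,
\[
e_n\,\mathrm{ev}_x(f)\,\mathrm{ev}_x(g)\Omega = e_n\,\mathrm{ev}_x(fg)\Omega = \mathrm{ev}_x(\mathcal{E}_n(fg))\Omega = \mathrm{ev}_x(\mathcal{E}_n(f)\,g)\Omega = \mathrm{ev}_x(\mathcal{E}_n(f))\,\mathrm{ev}_x(g)\Omega,
\]
using $(\star)$ for $fg$ and the right module property $\mathcal{E}_n(fg)=\mathcal{E}_n(f)g$. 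As $\mathrm{ev}_x(g)\Omega=e_n\,\mathrm{ev}_x(g)\Omega$, this is exactly the claimed identity on $K$.

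\textbf{Main obstacle.} The genuinely delicate point is not any single computation but the bridge between the purely algebraic conditional expectation $\mathcal{E}_n$ on $\ms P_\infty^o$ and the geometric (Jones-type) projection $e_n$ on $H$: one must see that the algebraic module and $\phi$-preservation identities are precisely what force $\mathrm{ev}_x(\mathcal{E}_n(f))\Omega$ to be the orthogonal projection of $\mathrm{ev}_x(f)\Omega$ onto $K$. Care is also needed to keep the fixed-algebra element on the correct side (here on the left, multiplying $f$) so that the module property applies without appealing to $*$-preservation of $\mathcal{E}_n$; this is legitimate because $\ms P_\infty^{o,\Psi_n}$ is self-adjoint, so that testing against $\mathrm{ev}_x(g)\Omega$ and against $\mathrm{ev}_x(g^*)\Omega$ ranges over the same family and the argument is insensitive to the inner-product convention.
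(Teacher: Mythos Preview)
Your proof is correct and follows essentially the same approach as the paper: both use preservation of $\varphi\circ\mathrm{ev}_x$ (via invariance and $h(p)=1$, with the $Beq_x$ case handled through $h_x\circ\iota_n=h_{I_x}$) together with the $\ms P_\infty^{o,\Psi_n}$-bimodule property of $\mathcal{E}_n$ to compare inner products against fixed-algebra vectors. The only organizational difference is that the paper tests against vectors on both sides at once via $\phi(\mathrm{ev}_x(X_\pi^* f X_\sigma))=\phi(\mathrm{ev}_x\circ\mathcal{E}_n(X_\pi^* f X_\sigma))$, whereas you split this into a vector identity (left module) followed by an upgrade step (right module); both are equivalent.
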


\begin{proof}
	By definition $\mathcal{A}_p[D]$-invariance implies that   $\mathcal{E}_n$ preserves $\varphi \circ \mathrm{ev}_x$.
	Assume $Beq_x$-invariance.
	Since $h_{I_x} = h_x \circ \iota_n$, we have
	$
	\mathcal{E}_n = \left( \text{id} \otimes \left(h_x \circ \iota_n \right) \right)\Psi_n %
	= (id \otimes h_x) \Phi_n
	$
	.
	 The $Beq_x$-invariance implies that  $\mathcal{E}_n$ preserves $\varphi \circ \mathrm{ev}_x$.
	For any $\pi, \sigma \in D(k)$ and $f \in \ms P_\infty^o$, we have
	\begin{align*}
	\langle  X_\pi \Omega, ev_x \circ \mathcal{E}_n(f) f_\sigma \Omega \rangle %
	= \varphi(\text{ev}_x \circ \mathcal{E}_n(X_\pi^* f X_\sigma) )%
	= \varphi(\text{ev}_x(X_\pi^* f X_\sigma)) %
	=	\langle  X_\pi \Omega, ev_x(f) f_\sigma \Omega \rangle,
	\end{align*}
	which completes the proof.
\end{proof}

In \cite{banica2012finetti}, a noncommutative martingale convergence theorem of cumulants plays an important role in the proof of de Finetti theorems.
Since $\varphi$ is not faithful, we modify this convergence theorem.

 \begin{prop}\label{prop, martin}
 Let  $ (M \subset B(H),\ \Omega \in H ) $ be a pair of a von Neumann algebra and a cyclic vector.
 Assume $M$ is $\sigma$-weekly generated by a sequence $(x_n)_{n \in \N}$ of self-adjoint elements.
 Let $q \in M $ be a non-zero projection and $L := qMq$,
 set a conditional expectaion $E_L := q \cdot q \colon M \ra L$.
 Let $(\ms B_n)_{n\in \N}$  be a decreasing sequence of $*$-subalgebras  of $\ms P_{\infty}^o$, and  denote by $e_n$ the orthogonal projections onto the closed subspaces $\overline{\mathrm{ev}_x(\ms B_n)\Omega}$. Set
 	 \[
 	 B_\infty := \bigcap_{n \in \N} \mathrm{ev}_x(\ms B_n).
 	 \]
 We assume the following conditions:
 	  \begin{enumerate}
 		 \item\label{martin 1-a}
 		 There is a $\varphi \circ \mathrm{ev}_x$ preserving conditional expectation $\mathcal{E}_n \colon \ms P_{\infty}^o \ra \ms B_n$ for each $n \in \N$.
 		 \item\label{martin 1-b}
 		 $\overline{B_\infty \Omega} = \overline{L\Omega}$.
 	  \end{enumerate}
 Then for any $\pi \in I(k)$, $k \in \N$, and $f_1, \dotsc, f_k \in \ms P_{\infty}^o$, we have
 	 \begin{align*}
 	 \text{s-}\underset{n \ra \infty}{\lim}\mathrm{ev}_x(\mathcal{E}_n^{\pi}[f_1, \dotsc, f_k])e_n &= E^{(\pi)}_\mathrm{nut}[f_1(x), \dotsc, f_k(x)], \\
 	 \text{s-}\underset{n \ra \infty}{\lim}\mathrm{ev}_x(K^{\mathcal{E}_n}_\pi[f_1, \dotsc, f_k])e_n &= K^{E_L}_\pi[f_1(x), \dotsc, f_k(x)],
 	 \end{align*}
 	 where we write $f(x) = ev_x(f)$ for $ f \in \ms P_\infty^o$.
\end{prop}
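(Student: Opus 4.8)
The plan is to push everything down to the GNS Hilbert space, where $\mathcal{E}_n$ is implemented by the projection $e_n$, to telescope the product $\mathcal{E}_n^\pi$ into an alternating word in the $e_n$, to let the $e_n$ converge, and finally to deduce the cumulant statement from the moment statement by Möbius inversion. First I would record the $L^2$-picture of $\mathcal{E}_n$, exactly as in Lemma~\ref{Jones projection}. Since $\mathcal{E}_n$ is a $\varphi\circ\mathrm{ev}_x$-preserving conditional expectation onto $\ms B_n$ and $\ms B_n$ is a $\ast$-subalgebra, for $g\in\ms B_n$, $f\in\ms P_\infty^o$ one has $\varphi(\mathrm{ev}_x(\mathcal{E}_n(g^{*}f)))=\varphi(\mathrm{ev}_x(g^{*}f))$ together with $\mathcal{E}_n(g^{*}f)=g^{*}\mathcal{E}_n(f)$, and these force
\[
\mathrm{ev}_x(\mathcal{E}_n(f))\,\Omega=e_n\,\mathrm{ev}_x(f)\,\Omega\qquad(f\in\ms P_\infty^o).
\]
Feeding $\mathrm{ev}_x(g)\Omega$ with $g\in\ms B_n$ into this identity, using $\mathcal{E}_n(fg)=\mathcal{E}_n(f)g$ and the fact that such vectors span $\mathrm{ran}(e_n)$, I would upgrade it to the two-sided operator identity $\mathrm{ev}_x(\mathcal{E}_n(f))\,e_n=e_n\,\mathrm{ev}_x(f)\,e_n$. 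In particular $\|\mathrm{ev}_x(\mathcal{E}_n(f))e_n\|\le\|\mathrm{ev}_x(f)\|$, a uniform bound that makes the strong limits below well behaved.

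Next I would unfold $\mathcal{E}_n^\pi$. Writing $\pi=\{V_1<\dots<V_m\}$ and $g_{V_t}:=\prod_{j\in V_t}^{\ra}f_j$, by definition $\mathcal{E}_n^\pi[f_1,\dots,f_k]=\prod_{t=1}^{m}\mathcal{E}_n[g_{V_t}]$, a product of elements of $\ms B_n$. Iterating the identity $\mathrm{ev}_x(\mathcal{E}_n(h))e_n=e_n\,\mathrm{ev}_x(h)\,e_n$ from the right telescopes this into an alternating word
\[
\mathrm{ev}_x\!\big(\mathcal{E}_n^\pi[f_1,\dots,f_k]\big)\,e_n=e_n\,\mathrm{ev}_x(g_{V_1})\,e_n\,\mathrm{ev}_x(g_{V_2})\,e_n\cdots e_n\,\mathrm{ev}_x(g_{V_m})\,e_n .
\]
Each $\mathrm{ev}_x(g_{V_t})$ is fixed and $\|e_n\|\le 1$, so the word is uniformly bounded; hence once $e_n$ converges strongly to some projection $e_\infty$, the word converges strongly to the same word with every $e_n$ replaced by $e_\infty$.

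The decisive step is the martingale convergence. Because $(\ms B_n)_n$ decreases, the subspaces $\overline{\mathrm{ev}_x(\ms B_n)\Omega}$ decrease, so $e_n\downarrow e_\infty$ strongly, $e_\infty$ being the projection onto $K:=\bigcap_n\overline{\mathrm{ev}_x(\ms B_n)\Omega}$. The claim to prove is $e_\infty=q$. Since $E_L=q\cdot q$ preserves $\varphi$ we get $q\Omega=\Omega$, whence $\overline{L\Omega}=\overline{qM\Omega}=\mathrm{ran}(q)$ by cyclicity; combined with hypothesis~\ref{martin 1-b} and $B_\infty\subseteq\mathrm{ev}_x(\ms B_n)$ this gives $\mathrm{ran}(q)=\overline{B_\infty\Omega}\subseteq\overline{\mathrm{ev}_x(\ms B_n)\Omega}$ for all $n$, i.e. $q\le e_\infty$. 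The reverse inclusion $e_\infty\le q$, equivalently $K\subseteq\overline{B_\infty\Omega}$, is precisely the reverse martingale convergence assertion, and this is where the non-faithfulness of $\varphi$ must be absorbed by working with the projections $e_n$ and the strong topology instead of $\|\cdot\|_2$ on all of $M$. Granting $e_\infty=q$, the word converges strongly to $q\,\mathrm{ev}_x(g_{V_1})\,q\cdots q\,\mathrm{ev}_x(g_{V_m})\,q=\prod_{t=1}^{m}E_L(g_{V_t}(x))=E_L^{(\pi)}[f_1(x),\dots,f_k(x)]$, which is the first claimed limit (with $E_L$ in the role of $E_\mathrm{nut}$).

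Finally the cumulant limit is formal: by the Boolean moment--cumulant formula (Proposition~\ref{moments-cumulants formula}), $K^{\mathcal{E}_n}_\pi=\sum_{\sigma\in I(k),\,\sigma\le\pi}\mu_{I(k)}(\sigma,\pi)\,\mathcal{E}_n^\sigma$, and the identical expression holds for $K^{E_L}_\pi$ in terms of the $E_L^{(\sigma)}$. Since $f\mapsto\mathrm{ev}_x(f)e_n$ is linear and the sum over $\sigma$ is finite, taking strong limits term by term and inserting the moment limit already established yields $\text{s-}\lim_n\mathrm{ev}_x(K^{\mathcal{E}_n}_\pi[f_1,\dots,f_k])e_n=K^{E_L}_\pi[f_1(x),\dots,f_k(x)]$. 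I expect the genuine obstacle to be the reverse inclusion $K\subseteq\overline{B_\infty\Omega}$ in the martingale step---the statement that the decreasing GNS subspaces shrink to exactly the tail---while the first two steps are bimodule bookkeeping and the last is pure Möbius combinatorics.
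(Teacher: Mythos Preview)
Your argument follows the paper's strategy step for step: the Jones-projection identity $\mathrm{ev}_x(\mathcal{E}_n(f))e_n=e_n\,\mathrm{ev}_x(f)\,e_n$, the telescoping of $\mathcal{E}_n^\pi$ into the alternating word $\prod_{V\in\pi}^{\ra}e_n\,\mathrm{ev}_x(\prod_{j\in V}^{\ra}f_j)\,e_n$, passage to the strong limit $e_n\to q$, and finally M\"obius inversion for the cumulant statement. The paper's own proof is much terser: it simply cites condition~(\ref{martin 1-a}) for the Jones identity and condition~(\ref{martin 1-b}) for $\text{s-}\lim e_n=q$, without disentangling the two inclusions $q\le e_\infty$ and $e_\infty\le q$ that you isolate.

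Two remarks. First, you are correct that the reverse inclusion $\bigcap_n\overline{\mathrm{ev}_x(\ms B_n)\Omega}\subseteq\overline{B_\infty\Omega}$ is not automatic, but the paper does \emph{not} prove it inside this proposition either; it is taken as part of what condition~(\ref{martin 1-b}) delivers, and in the application (Theorem~\ref{finetti}) the equality $\overline{B_\infty\Omega}=\overline{M_{\tl}\Omega}$ is verified separately before this proposition is invoked. So your ``granting $e_\infty=q$'' matches what the paper actually does. Second, your step ``since $E_L=q\cdot q$ preserves $\varphi$ we get $q\Omega=\Omega$'' imports a hypothesis not stated in the proposition; the paper does not argue this way but simply identifies $q$ with the limiting projection via condition~(\ref{martin 1-b}) (in the application $q=e_{\tl}$ is by construction the projection onto $\overline{M_{\tl}\Omega}$).
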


 \begin{proof}
 	By condition (1), $e_n \mathrm{ev}_x(f) e_n$ $= \mathrm{ev}_x(\mathcal{E}_n(f)) e_n$.
 	By condtition (2), s-$\lim_{n \ra \infty}e_n = q$, and s-$\lim_{n \ra \infty} \mathrm{ev}_x(\mathcal{E}_n(f)) e_n = q\mathrm{ev}_x(f)q=
 	E_L[\mathrm{ev}_x(f)].$
 	It holds that
 	$
 	\mathrm{ev}_x \circ \mathcal{E}_n^{\pi}[f_1, \dotsc, f_k] e_n =%
 	$
 	$\prod_{V \in \pi}^{\ra} e_n \mathrm{ev}_x(\prod_{j \in V }^{\ra} f_j) e_n,
 	$
 	for any $\pi \in I(k)$.
 	Hence
 	\[
 	\text{s-}\lim_{n \ra \infty}\mathrm{ev}_x \circ \mathcal{E}_n^{\pi}[f_1, \dotsc, f_k] e_n =%
 	\prod_{V \in \pi}^{\ra}E_L[\prod_{j \in V }^{\ra}f_j(x)]=%
 	E_L^\pi[f_1(x), f_2(x), \dots, f_k(x)].
 	\]

 	Partitioned cumulants are linear combinations of partitioned conditional expectations, which proves the statement.

 \end{proof}

\begin{prop}\label{part_exp}
 For any $k \in \N$, $\pi \in D(k)$ and sufficiently large $n$ such that the Gram matrix is invertible, we have
  \begin{align*}
  \mathcal{E}_n^{\pi}[X_1, \dotsc, X_1] = \frac{1}{n^{|\pi|}}\sum_{\substack{\bfi \in [n]^k \\ \pi \leq \ker\bfi}} X_{i_1}X_{i_2}\dotsb X_{i_k}.
  \end{align*}

\end{prop}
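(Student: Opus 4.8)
The plan is to reduce the identity to the single-block statement $\mathcal{E}_n[X_1^m]=\tfrac1n\sum_{i=1}^n X_i^m$ for $m\in L_D$, and to prove that statement by evaluating one orthogonal projection explicitly.

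First I would unfold the partitioned expectation. By the convention of Proposition~\ref{moments-cumulants formula}, $\mathcal{E}_n^{\pi}[X_1,\dotsc,X_1]=\prod_{V\in\pi}^{\ra}\mathcal{E}_n[X_1^{|V|}]$, an ordered product over the (totally ordered) blocks of the interval partition $\pi$. Since $D$ is blockwise, condition (D1) gives ${\bf 1}_{|V|}\in D(|V|)$, i.e.\ $|V|\in L_D$, for each block $V\in\pi$. On the right-hand side of the claimed equation, writing $\pi=\{V_1<\dots<V_b\}$ with $b=|\pi|$ and using that the $V_\nu$ are consecutive intervals, a tuple $\bfi$ with $\pi\le\ker\bfi$ is precisely one that is constant on each block; hence $\sum_{\pi\le\ker\bfi}X_{i_1}\dotsb X_{i_k}=\prod_{\nu=1}^b\bigl(\sum_{a\in[n]}X_a^{|V_\nu|}\bigr)$ by distributivity. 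So it suffices to establish the single-block identity above for every $m\in L_D$.

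For the single-block case, since $X_1^m\in\ms P_n^o$ the coaction $\Psi_n$ restricts to $\Lambda_n$, so $\mathcal{E}_n[X_1^m]=(\mathrm{id}\otimes h_D)\Lambda_n(X_1^m)=\sum_{\bfi\in[n]^m}X_{i_1}\dotsb X_{i_m}\,h_D(pu_{\bfi\bfj}p)$ with $\bfj=(1,\dotsc,1)$. The Haar Functionals theorem gives $h_D(pu_{\bfi\bfj}p)=H^{D(m)}_{\bfi\bfj}=\langle e_\bfi, H^{D(m)}e_1^{\otimes m}\rangle$. The key point is then to show $H^{D(m)}e_1^{\otimes m}=\tfrac1n T_{{\bf 1}_m}$. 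Since $m\in L_D$ we have ${\bf 1}_m\in D(m)$, so $\tfrac1n T_{{\bf 1}_m}$ lies in the range $\mathrm{Span}\{T_\pi:\pi\in D(m)\}$ of $H^{D(m)}$; and for every $\pi\in D(m)$ one checks $\langle e_1^{\otimes m},T_\pi\rangle=\zeta(\pi,{\bf 1}_m)=1$ together with $\langle \tfrac1n T_{{\bf 1}_m},T_\pi\rangle=\tfrac1n n^{|{\bf 1}_m\vee\pi|}=\tfrac1n\,n=1$, using that ${\bf 1}_m$ is the maximal partition and has a single block. Hence $e_1^{\otimes m}-\tfrac1n T_{{\bf 1}_m}$ is orthogonal to the whole range, proving the claim. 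Substituting back, $\mathcal{E}_n[X_1^m]=\tfrac1n\sum_{\bfi}X_{i_1}\dotsb X_{i_m}\,\zeta({\bf 1}_m,\ker\bfi)=\tfrac1n\sum_{i=1}^n X_i^m$, and combining with the reduction finishes the proof.

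The main obstacle is precisely the single-block identity, that is, the explicit evaluation of the projection $H^{D(m)}e_1^{\otimes m}$; everything else is bookkeeping in the block decomposition. I would also remark that this orthogonality argument uses only ${\bf 1}_m\in D(m)$ and never the invertibility of the Gram matrix, so the conclusion in fact holds for all $n$, the large-$n$ hypothesis being needed only for the downstream Weingarten-type statements.
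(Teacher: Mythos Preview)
Your proof is correct. The multiplicative reduction to single blocks is essentially the paper's ``induction on $|\pi|$'' (which the paper imports from \cite{banica2012finetti}), so the overall architecture is the same. The genuine difference lies in how you handle the base case $\pi={\bf 1}_m$: rather than passing through the Weingarten formula (Proposition~\ref{rmk_wein}) as the cited argument does, you evaluate $H^{D(m)}e_1^{\otimes m}$ directly by the orthogonality check $\langle e_1^{\otimes m}-\tfrac1n T_{{\bf 1}_m},\,T_\pi\rangle=0$ for all $\pi\in D(m)$. This is more elementary and, as you note, establishes the identity for every $n$, not only for $n$ large enough to invert the Gram matrix; the large-$n$ hypothesis is indeed only needed downstream in Lemma~\ref{lemma i.d. cumulants} where the Weingarten estimate enters.
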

\begin{proof}
	This follows by a similar proof to that in \cite[Prop.4.7]{banica2012finetti}, which is induction on $|\pi|$.
\end{proof}

\begin{lem}\label{lem_eMe}
	Let $M$ be a von Neumann algebra. Fix a nonzero projection $e \in M$.
	Set a conditional expectation $E \colon M \ra N = eMe$ by  $E(y) = eye$.
	Let $k\in \N$ with $k \geq 2$ and $\pi \in I(k)$.
	Assume that $l \in \N$ satisfies $l < k$ and $l \sim^\pi l+1$.
	Then for any  $b \in N, y_1, \dotsc, y_k \in M$,
	\begin{align}\label{eMe}
	K^E_\pi[y_1, \dotsb, y_l b, y_{l+1}, \dotsb, y_k ] = 0.
	\end{align}

\end{lem}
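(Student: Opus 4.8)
The plan is to obtain a closed form for the operator‑valued Boolean cumulants attached to the expectation $E = e\cdot e$ and then read off the vanishing directly. The only special feature of $E$ that will be used is that for $b \in N = eMe$ one has $eb = be = b$, and hence
\[
b(1-e) = b - be = 0 = (1-e)b .
\]

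First I would prove, by induction on $m$, the closed formula
\[
K_m^E[w_1, \dots, w_m] = e\, w_1 (1-e) w_2 (1-e) \cdots (1-e) w_m\, e \qquad (w_1, \dots, w_m \in M).
\]
The base case $m=1$ is $K_1^E[w] = E[w] = ewe$. For the inductive step I would use the first‑block recursion coming from relations (1)--(2) of Definition~\ref{def_cumulant}: decomposing each $\pi \in I(m)$ according to its first block $[1,t]$ and summing over the interval partitions of the complement gives
\[
E[w_1\cdots w_m] = \sum_{t=1}^m K_t^E[w_1,\dots,w_t]\, E[w_{t+1}\cdots w_m],
\]
so that $K_m^E[w_1,\dots,w_m] = E[w_1\cdots w_m] - \sum_{t=1}^{m-1} K_t^E[w_1,\dots,w_t]\,E[w_{t+1}\cdots w_m]$. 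Substituting the induction hypothesis and $E[\,\cdot\,] = e(\cdot)e$, the $t$-th summand becomes $e w_1(1-e)\cdots(1-e)w_t\, e\, w_{t+1}\cdots w_m e$, and a telescoping argument (comparing the hybrid words that carry $(1-e)$ in the first $t$ gaps and trivial products in the remaining gaps, using $1 = e + (1-e)$ at the $t$-th gap) collapses the right‑hand side to $ew_1(1-e)\cdots(1-e)w_m e$. Alternatively one can define the right‑hand side of the displayed formula on each interval block, multiply over the blocks of $\pi$, observe that summing over $I(m)$ inserts $e+(1-e)=1$ at each of the $m-1$ gaps and returns $E[w_1\cdots w_m]$, and then invoke the uniqueness of the cumulants determined by Definition~\ref{def_cumulant}.

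Granting the formula I finish as follows. Since $K^E_\pi = \prod_{V \in \pi}^{\ra} K^E_{(V)}$, multiplying $y_l$ on the right by $b$ affects only the factor indexed by the block $V \in \pi$ containing $l$; as $\pi$ is an interval partition, $V = [a,c]$ is an interval, and the hypotheses $l \sim^\pi l+1$ and $l<k$ give $a \le l < l+1 \le c$. The corresponding factor is
\[
K^E_{(V)}[\dots] = e\, y_a (1-e)\cdots(1-e)\,(y_l b)\,(1-e)\, y_{l+1}(1-e)\cdots(1-e)\, y_c\, e,
\]
where $l+1 \le c$ guarantees a factor $(1-e)$ immediately to the right of $y_l b$. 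Since $b(1-e)=0$, this factor vanishes, so the whole ordered product $K^E_\pi[y_1,\dots,y_l b,\dots,y_k]$ vanishes, which is \eqref{eMe}.

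The main obstacle is the closed‑form computation of $K_m^E$: once the telescoping (or the uniqueness) bookkeeping is set up correctly, the rest is immediate. The one point that genuinely needs attention is that the annihilating factor $(1-e)$ must sit to the \emph{right} of the inserted $b$, which is exactly why the hypothesis reads $l \sim^\pi l+1$ with $l<k$; had $b$ been appended at the last position $c$ of its block, $(y_c b)e = y_c b$ need not be $0$ and the cumulant would not vanish in general.
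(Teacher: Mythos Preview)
Your proof is correct, and it takes a genuinely different route from the paper's. The paper argues by induction on $k$ directly on the vanishing statement: it writes
\[
K^E_k[y_1,\dots,y_lb,\dots,y_k]=E[y_1\cdots y_lb\,y_{l+1}\cdots y_k]-\sum_{\pi\in I(k),\,\pi\neq\mathbf 1_k}K^E_\pi[\dots],
\]
kills by induction all summands with $l\sim^\pi l+1$, observes that the remaining $\pi$ factor as $\sigma\otimes\rho$ with $\sigma\in I(l)$, $\rho\in I(k-l)$, and then uses $b=be$ to get $E[y_1\cdots y_lb\,y_{l+1}\cdots y_k]=E[y_1\cdots y_lb]\,E[y_{l+1}\cdots y_k]$, so that the moments--cumulants formula cancels everything. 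In contrast, you first establish the closed form $K^E_m[w_1,\dots,w_m]=e\,w_1(1-e)w_2\cdots(1-e)w_m\,e$ and then read off the vanishing from the single identity $b(1-e)=0$. Your approach is a bit more informative: it yields the explicit Boolean cumulant for the compression expectation as a byproduct and makes transparent \emph{why} the hypothesis $l\sim^\pi l+1$ is exactly what is needed (it places a factor $(1-e)$ immediately to the right of $b$). The paper's argument avoids ever writing down this formula, trading the telescoping computation for a factorization-and-cancel argument at each inductive step; both are short, but yours localizes the mechanism to a single algebraic identity.
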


\begin{proof}
	In the case $k=2$, it holds $I(2)= \{ \sqcap \}$ and
	$
	K^E_2[y_1b, y_2] = E[y_1b y_2] - E[y_1b]E[y_2] = ey_1by_2e-ey_1bey_2e=0
	$
	as $b=be$.

	Let $k \geq 3$.
	Assume \eqref{eMe} holds for any $\pi \in I(k-1)$.
	Since $b=be$,
	$
	E[y_1\dotsc y_l b y_{l+1} \dotsc y_k ]%
	$
	$=E[y_1\dotsc y_lb]  E[y_{l+1} \dotsc y_k ].
	$
	The moments-cumulants formula and the assumption of induction imply that
	\begin{align*}
	K^E_k[y_1\dotsb, y_l b&, y_{l+1}, \dotsb, y_k ] \\%
	&= E[y_1\dotsc y_l b y_{l+1} \dotsc y_k ] - %
	\sum_{\pi \in I(k), \pi \neq {\bf1_{k}}}K_\pi^E [y_1, \dotsb, y_l b, y_{l+1}, \dotsb, y_k] \\
	& = E[y_1\dotsc y_lb]  E[y_{l+1} \dotsc y_k ] - %
	\sum_{\pi \in I(k), l \not \sim^\pi l+1}K_\pi^E [y_1, \dotsb, y_l b, y_{l+1}, \dotsb, y_k].
	\end{align*}
	We have   $\{\pi \in I(k) \mid  l \not \sim^\pi l+1\}=\{\sigma \otimes \rho \mid \sigma \in I(l), \rho\in I(k-l)\}$. Then
	\begin{align*}
	K^E_{\sigma \otimes \rho} [y_1, \dotsb, y_lb, y_{l+1}, \dotsb, y_k]
	&= \prod_{V \in \sigma \otimes \rho}^{\ra} %
	K^E_{(V)} [y_1, \dotsb, y_lb, y_{l+1}, \dotsb, y_k]\\
	&=\prod_{V_1 \in \sigma}^{\ra}%
	K^E_{(V_1)} [y_1, \dotsb, y_lb]%
	\prod_{V_2 \in  \rho}^{\ra}%
	K^E_{(V_2)} [y_{l+1}, \dotsb, y_k]	\\
	&=K^E_{\sigma} [y_1, \dotsb, y_lb]%
	K^E_{\rho} [y_{l+1}, \dotsb, y_k].
	\end{align*}
	Hence
	$ E[y_1\dotsc y_lb]  E[y_{l+1} \dotsc y_k ] - %
	\sum_{\pi \in I(k), l \not \sim^\pi l+1}K_\pi^E [y_1, \dotsb, y_l b, y_{l+1}, \dotsb, y_k]=0.
	$
	Induction on $k$ proves the lemma.
\end{proof}

\subsection{Boolean de Finetti theorems} \hfill

\begin{lem}\label{lemma i.d. cumulants}
	Assume that $||x_j|| \leq ||x_1||$ for any $ j \in \N$.
Let $D$ be one of  $I, I_o, I_h, I_b$.
For any $k \in \N$, $\sigma \in D(k)$ and $n_0, n \in \N$ with $n_0 \leq n$, set an element in ${\ms P}^o_{\geq n_0}$
by
\begin{align*}
f^{n_0,n}_\sigma := \sum_{\pi \in D(k)}%
\frac{1}{n^{|\pi|}}%
\sum_{\substack{\bfi \in [n_0, n]^k \\ \pi \leq \ker\bfi }}%
X_{i_1}X_{i_2}\dotsb X_{i_k}\mu_{I(k)}(\pi, \sigma).
\end{align*}
Then we have
	\begin{align}
	&|| \mathrm{ev}_x \circ \mathcal{E}_n [X_{j_1}X_{j_2} \dotsb X_{j_k}]-%
	\sum_{\substack{\sigma \in D(k) \\ \sigma \leq \ker\bfj }}%
	\mathrm{ev}_x \circ K^{\mathcal{E}_n}_\sigma[X_1, \dotsc, X_1]||%
		\ra 0 \ (\text{ as } n \ra \infty). \label{i.d. cumulants}\\
	&||\mathrm{ev}_x \circ \mathcal{E}_n[X_{j_1}X_{j_2} \dotsb X_{j_k}]-%
	\sum_{\substack{\sigma \in D(k) \\ \sigma \leq \ker\bfj }}%
	\mathrm{ev}_x(f^{n_0,n}_\sigma) ||
	\ra 0 \ (\text{as } n \ra \infty). \label{i.d. cumulants faraway}
	\end{align}

\end{lem}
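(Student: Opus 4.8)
The plan is to reduce both estimates to one algebraic identity expressing $\mathcal{E}_n[X_{j_1}\dotsb X_{j_k}]$ through the quantities $\mathcal{E}_n^\pi[X_1,\dotsc,X_1]$ ($\pi\in D(k)$), and then to control all errors with the Weingarten estimate together with a uniform norm bound. First I would compute $\mathcal{E}_n=(\mathrm{id}\otimes h_D)\circ\Psi_n$ directly on a monomial: since $\Psi_n(X_{j_1}\dotsb X_{j_k})=\sum_{\bfi\in[n]^k}X_{i_1}\dotsb X_{i_k}\otimes pu_{\bfi\bfj}p$ and $h_D(pu_{\bfi\bfj}p)=H^{D(k)}_{\bfi\bfj}$, inserting the Weingarten expansion of $H^{D(k)}_{\bfi\bfj}$ from Proposition~\ref{rmk_wein} and collecting terms by $\pi$ gives
\[
\mathcal{E}_n[X_{j_1}\dotsb X_{j_k}]=\sum_{\substack{\sigma\in D(k)\\ \sigma\leq\ker\bfj}}\ \sum_{\pi\in D(k)} W^D_{k,n}(\pi,\sigma)\,X_\pi .
\]
Using Proposition~\ref{part_exp} to rewrite $X_\pi=n^{|\pi|}\mathcal{E}_n^\pi[X_1,\dotsc,X_1]$, this becomes $\sum_\sigma g_\sigma^n$, where $g_\sigma^n:=\sum_{\pi\in D(k)}n^{|\pi|}W^D_{k,n}(\pi,\sigma)\,\mathcal{E}_n^\pi[X_1,\dotsc,X_1]$ and $\sigma$ runs over $\{\sigma\in D(k)\mid\sigma\leq\ker\bfj\}$.

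The key estimate is then that $g_\sigma^n$ is close to the full-range analogue $f_\sigma^n:=\sum_{\pi\in D(k)}\mu_{I(k)}(\pi,\sigma)\,\mathcal{E}_n^\pi[X_1,\dotsc,X_1]$ of $f^{n_0,n}_\sigma$. The difference of coefficients is $n^{|\pi|}W^D_{k,n}(\pi,\sigma)-\mu_{I(k)}(\pi,\sigma)=O(1/n)$ by the Weingarten estimate (Proposition~\ref{prop_wein_est}, valid since each $D$ is closed under taking an interval). Because $\|x_j\|\leq\|x_1\|$, each of the $n^{|\pi|}$ monomials in $X_\pi$ has norm $\leq\|x_1\|^k$, so $\|\mathrm{ev}_x(\mathcal{E}_n^\pi[X_1,\dotsc,X_1])\|\leq\|x_1\|^k$ uniformly in $n$; summing the $O(1/n)$ coefficient errors over the finite index set $D(k)$ gives $\|\mathrm{ev}_x(g_\sigma^n-f_\sigma^n)\|=O(1/n)$, and hence $\|\mathrm{ev}_x\circ\mathcal{E}_n[X_{j_1}\dotsb X_{j_k}]-\sum_\sigma\mathrm{ev}_x(f_\sigma^n)\|\to0$.

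To finish \eqref{i.d. cumulants} I would identify $f_\sigma^n$ with $K^{\mathcal{E}_n}_\sigma[X_1,\dotsc,X_1]$. By the moment--cumulant formula (Proposition~\ref{moments-cumulants formula}), $K^{\mathcal{E}_n}_\sigma[X_1,\dotsc,X_1]=\sum_{\rho\in I(k),\ \rho\leq\sigma}\mu_{I(k)}(\rho,\sigma)\,\mathcal{E}_n^\rho[X_1,\dotsc,X_1]$, which differs from $f_\sigma^n$ only in the terms with $\rho\in I(k)\setminus D(k)$. This case distinction is where I expect the main obstacle. For $D=I$ and $D=I_b$ there are no such terms, since any $\rho\leq\sigma\in D(k)$ automatically lies in $D(k)$ (the block-size constraint is inherited by refinements). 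For $D=I_h$ and $D=I_o$ such a $\rho$ must contain a block of forbidden size (an odd block, resp.\ a singleton); as $D(m)=\emptyset$ for that size we have $H^{D(m)}=0$, whence $\mathcal{E}_n[X_1^m]=0$ and therefore $\mathcal{E}_n^\rho[X_1,\dotsc,X_1]=\prod_{W\in\rho}\mathcal{E}_n[X_1^{|W|}]=0$, so the extra terms again vanish. In every case $K^{\mathcal{E}_n}_\sigma[X_1,\dotsc,X_1]=f_\sigma^n$, which combined with the previous paragraph yields \eqref{i.d. cumulants}. Finally, for \eqref{i.d. cumulants faraway} I would compare $f^{n_0,n}_\sigma$ with $f_\sigma^n$: they differ only by monomials indexed by $\bfi\in[n]^k\setminus[n_0,n]^k$ with $\pi\leq\ker\bfi$, of which there are $n^{|\pi|}-(n-n_0+1)^{|\pi|}$; dividing by $n^{|\pi|}$ gives $1-(1-(n_0-1)/n)^{|\pi|}\to0$ for fixed $n_0$, so $\|\mathrm{ev}_x(f^{n_0,n}_\sigma-f_\sigma^n)\|\to0$, and \eqref{i.d. cumulants faraway} follows from \eqref{i.d. cumulants}.
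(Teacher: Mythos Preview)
Your proof is correct and follows essentially the same route as the paper: expand $\mathcal{E}_n[X_{j_1}\dotsb X_{j_k}]$ via the Weingarten formula and Proposition~\ref{part_exp}, compare coefficients $n^{|\pi|}W^D_{k,n}(\pi,\sigma)$ with $\mu_{I(k)}(\pi,\sigma)$ using Proposition~\ref{prop_wein_est}, bound uniformly by $\|x_1\|^k$, and then handle the shift to $[n_0,n]$ by the elementary counting $(n^{|\pi|}-(n-n_0+1)^{|\pi|})/n^{|\pi|}\to0$. One point where you are in fact more careful than the paper: in identifying $f_\sigma^n$ with $K^{\mathcal{E}_n}_\sigma[X_1,\dotsc,X_1]$ you explicitly address why the sum over $\pi\in D(k)$ agrees with the sum over $\pi\in I(k)$, $\pi\leq\sigma$, from the moment--cumulant formula, by observing that for $D=I,I_b$ refinements stay in $D$, while for $D=I_h,I_o$ the extra $\rho\notin D(k)$ contain a block of size $m$ with $D(m)=\emptyset$, whence $H^{D(m)}=0$ and $\mathcal{E}_n[X_1^m]=0$; the paper simply writes both sums over $D(k)$ without comment.
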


\begin{proof}

	By Proposition \ref{rmk_wein} and Lemma \ref{part_exp}, we have for sufficiently large $n$,
	  \begin{align*}
	   \mathcal{E}_n[X_{j_1}X_{j_2} \dotsb X_{j_k}]  %
	  &= \sum_{\bfi \in [n]^k} X_{i_1}X_{i_2}\dotsb X_{i_k} Q_{\bfi \bfj}^{(k)} \\
	  &= \sum_{\bfi \in [n]^k} X_{i_1}X_{i_2}\dotsb X_{i_k} \sum_{\substack{\pi, \sigma \in D(k) \\ \pi \leq \ker\bfi, \sigma \leq \ker\bfj}} W_{k,n}(\pi, \sigma) \\
	  &= \sum_{\substack{\sigma \in D(k) \\ \sigma \leq \ker\bfj }} %
	  \sum_{\pi \in D(k)}  (\frac{1}{n^{|\pi|}}\sum_{\substack{\bfi \in [n]^k\\ \pi \leq \ker\bfi}} X_{i_1}X_{i_2}\dotsb X_{i_k}) %
	  n^{|\pi|} W_{k,n}(\pi, \sigma) \\
	  &=\sum_{\substack{\sigma \in D(k) \\ \sigma \leq \ker\bfj }} %
	  \sum_{\pi \in D(k)} \mathcal{E}_n^{\pi}[X_1, \dotsc, X_1] %
	  n^{|\pi|} W_{k,n}(\pi, \sigma).
	  \end{align*}
	By the moments-cumulants formula Proposition~\ref{moments-cumulants formula}, we have
	  \begin{align*}
	  \mathcal{E}_n&[X_{j_1}X_{j_2} \dotsb X_{j_k}]-%
	  \sum_{\substack{\sigma \in D(k) \\ \sigma \leq \ker\bfj }}%
	  K^{\mathcal{E}_n}_\sigma[X_1, \dotsc, X_1]  \\
	  &=\sum_{\substack{\sigma \in D(k) \\ \sigma \leq \ker\bfj }} %
	  \sum_{\pi \in D(k)} \mathcal{E}_n^{\pi}[X_1, \dotsc, X_1] %
	  n^{|\pi|} W_{k,n}(\pi, \sigma)-%
	  \sum_{\substack{\sigma \in D(k) \\ \sigma \leq \ker\bfj }}%
	  \sum_{\pi \in D(k)}%
	  \mathcal{E}_n^{\pi}[X_1, \dotsc, X_1]\mu_{I(k)}(\pi, \sigma) \\
	  &=\sum_{\pi \in D(k)} %
	  [\sum_{\substack{\sigma \in D(k) \\ \sigma \leq \ker\bfj }} %
	  n^{|\pi|} W_{k,n}(\pi, \sigma) - \mu_{I(k)}(\pi, \sigma)]%
	  \mathcal{E}_n^{\pi}[X_1, \dotsc, X_1].
	  \end{align*}
	  \begin{align*}
		||\mathrm{ev}_x \circ \mathcal{E}_n&[X_{j_1}X_{j_2} \dotsb X_{j_k}]-%
		\sum_{\substack{\sigma \in D(k) \\ \sigma \leq \ker\bfj }}%
		\mathrm{ev}_x \circ K^{\mathcal{E}_n}_\sigma[X_1, \dotsc, X_1]|| \\
	  &\leq \max_{\pi \in D(k)} [\sum_{\substack{\sigma \in D(k) \\ \sigma \leq \ker\bfj }} %
		|n^{|\pi|} W_{k,n}(\pi, \sigma) - \mu_{I(k)}(\pi, \sigma)| ]%
		 \sum_{\pi \in D(k)} %
		 ||\mathrm{ex}_x \circ \mathcal{E}_n^{\pi}[X_1, \dotsc, X_1] || \\
		 & \leq \max_{\pi \in D(k)} \sum_{\substack{\sigma \in D(k) \\ \sigma \leq \ker\bfj }} %
		 |n^{|\pi|} W_{k,n}(\pi, \sigma) - \mu_{I(k)}(\pi, \sigma)| %
		 \cdot |D(k)| \cdot ||x_1||^k.
	\end{align*}
	By the Weingarten estimate in Proposition \ref{prop_wein_est},
	\begin{align*}
	\max_{\pi \in D(k)} \sum_{\substack{\sigma \in D(k) \\ \sigma \leq \ker\bfj }} %
	|n^{|\pi|} W_{k,n}(\pi, \sigma) - \mu_{I(k)}(\pi, \sigma)| %
	= O(\frac{1}{n})  \ (\text{as } n \ra \infty).
	\end{align*}
	Therefore, we have \eqref{i.d. cumulants}.

	For any $n_0 \in \N$, we have
	\begin{align*}
		K^{\mathcal{E}_n}_\sigma[X_1, \dotsc, X_1] - f^{n_0,n}_\sigma
		=\sum_{\pi \in D(k)}%
		\frac{1}{n^{|\pi|}}%
		\sum_{\substack{\bfi \in [n]^k \setminus [n_0, n]^k \\ \pi \leq \ker\bfi }}%
		 X_{i_1}X_{i_2}\dotsb X_{i_k}\mu_{I(k)}(\pi, \sigma).
	\end{align*}
	Now
	\begin{align*}
	&\frac{1}{n^{|\pi|}}%
	\sum_{\substack{\bfi \in [n]^k \setminus [n_0, n]^k \\ \pi \leq \ker\bfi }}%
	||x_{i_1}x_{i_2}\dotsb x_{i_k} ||
	\leq \frac{n^{|\pi|} - (n-n_0)^{|\pi|}}{n^{|\pi|}} ||x_1||^k
	 \ra 0 \ (\text{as } n \ra \infty).
	\end{align*}
	Hence
	\begin{align*}
	||\mathrm{ev}_x \circ \mathcal{E}_n&[X_{j_1}X_{j_2} \dotsb X_{j_k}]-%
	\sum_{\substack{\sigma \in D(k) \\ \sigma \leq \ker\bfj }}%
	\mathrm{ev}_x(f^{n_0,n}_\sigma) || \\
	&\leq ||\mathrm{ev}_x \circ \mathcal{E}_n[X_{j_1}X_{j_2} \dotsb X_{j_k}]-%
	\sum_{\substack{\sigma \in D(k) \\ \sigma \leq \ker\bfj }} %
	\mathrm{ev}_x \circ K^{\mathcal{E}_n}_\sigma[X_1, \dotsc, X_1]|| 	\\
	&\hspace{30mm} + \sum_{\substack{\sigma \in D(k) \\ \sigma \leq \ker\bfj }}%
	||\mathrm{ev}_x \circ  K^{\mathcal{E}_n}_\sigma[X_1, \dotsc, X_1] - \mathrm{ev}_x (f^{n_0,n}_\sigma)|| \\
	&\ra 0 \ (\text{as } n \ra \infty).
					\end{align*}

\end{proof}

Now we are prepared to prove our main theorem, de Finetti theorems for  ${\mathcal A}_p[I_x]$ and $Beq_x$.

\begin{thm}\label{finetti}
Let $(M, \varphi)$ be a pair of a von Neumann algebra and a nondegenerate normal state.
Assume $M$ is generated by self-adjoint elements $(x_j)_{j \in \N}$.
Consider the following three assertions.
	\begin{enumerate}
 	\item \label{de finetti 1}
 	The joint distribution of $(x_j)_{j \in \N}$ is $\mathcal{A}_p[I_x]$-invariant.
	\item \label{de finetti 2}
	The joint distribution of $(x_j)_{j \in \N}$ is $Beq_x$-invariant.

 	\item \label{de finetti 3}
 	The elements $(x_j)_{j \in \N}$ are Boolean independent and identically distributed over $(E_\mathrm{nut}, M_\mathrm{nut})$,
 	and for all $k \in \N \setminus L_{I_x}$, and $b_1, \dotsb, b_k \in M_\mathrm{nut} \cup \{1 \}$, it holds that
 	\begin{align*}
 	K_k^{E_\mathrm{nut}}[x_1 b_1, x_1 b_2, \dotsc, x_1] = 0.
 	\end{align*}

 	\end{enumerate}
Then for $x=s,o,h$, all assertions are  equivalent.
For $x=b$, (1) and (3) are equivalent.
\end{thm}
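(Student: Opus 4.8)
The plan is to prove the cycle $(3)\Rightarrow(1)\Rightarrow(2)\Rightarrow(3)$ for $x=s,o,h$, and $(1)\Leftrightarrow(3)$ for $x=b$, so that in every case the only substantial work is the single ``hard'' direction that invariance implies $(3)$. The implication $(3)\Rightarrow(1)$ is immediate from Proposition~\ref{easy_part} applied to the blockwise category $D=I_x$ and the conditional expectation $E=E_\mathrm{nut}$ onto $N=M_\mathrm{nut}$: hypothesis $(3)$ is exactly ``Boolean i.i.d.\ over $(E_\mathrm{nut},M_\mathrm{nut})$ with $K^{E_\mathrm{nut}}_k=0$ for $k\notin L_{I_x}$''. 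The implication $(1)\Rightarrow(2)$ is obtained by applying $\iota_n$ to the invariance identity, as noted after the definition of $Beq_x$-invariance. Thus for $x=s,o,h$ it remains to prove $(2)\Rightarrow(3)$, and for $x=b$ to prove $(1)\Rightarrow(3)$; both are covered at once by Lemma~\ref{Jones projection}, which applies to $\mathcal{A}_p[I_b]$-invariance (since $I_b$ is blockwise) and to $Beq_x$-invariance for $x=s,o,h$.

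For the hard direction, first note that either form of invariance implies $\mc{B}_s$-invariance by Lemma~\ref{implies B_s-invariant}, so by Proposition~\ref{bs-inv} and the corollary following it, $E_\mathrm{nut}=e_\mathrm{nut}\cdot e_\mathrm{nut}$ is a nondegenerate, normal, $\varphi$-preserving conditional expectation onto $M_\mathrm{nut}$. Next I would invoke the martingale convergence Proposition~\ref{prop, martin} with $q=e_\mathrm{nut}$, $L=M_\mathrm{nut}$, $E_L=E_\mathrm{nut}$, and $\ms B_n=\ms P^{o,\Psi_n}_\infty$ the fixed-point algebras. Its hypothesis~(1) holds because $\mathcal{E}_n=(\mathrm{id}\otimes h)\circ\Psi_n$ is a conditional expectation onto $\ms P^{o,\Psi_n}_\infty$ and preserves $\varphi\circ\mathrm{ev}_x$ by Lemma~\ref{Jones projection}; the remaining hypothesis, that $\overline{B_\infty\Omega}=\overline{M_\mathrm{nut}\Omega}$ (together with the decreasing-tower structure needed for $\text{s-}\lim_n e_n=e_\mathrm{nut}$), is the place where the identification of the tail algebra with the asymptotic fixed-point data must be checked. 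I expect this bookkeeping, rather than any single estimate, to be the most delicate part of the setup.

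With Proposition~\ref{prop, martin} in hand one obtains, for every $\sigma\in I(k)$, the strong limit $\text{s-}\lim_n\mathrm{ev}_x(K^{\mathcal{E}_n}_\sigma[X_1,\dotsc,X_1])e_n=K^{E_\mathrm{nut}}_\sigma[x_1,\dotsc,x_1]$. Multiplying the norm estimate \eqref{i.d. cumulants} of Lemma~\ref{lemma i.d. cumulants} on the right by $e_n$ and letting $n\to\infty$ then yields the scalar identity
\begin{align*}
E_\mathrm{nut}[x_{j_1}\dotsb x_{j_k}]=\sum_{\substack{\sigma\in I_x(k)\\ \sigma\leq\ker\bfj}}K^{E_\mathrm{nut}}_\sigma[x_1,\dotsc,x_1],
\end{align*}
since $\mathrm{ev}_x(\mathcal{E}_n[X_{j_1}\dotsb X_{j_k}])e_n=e_n x_{j_1}\dotsb x_{j_k}e_n\to E_\mathrm{nut}[x_{j_1}\dotsb x_{j_k}]$ by the Jones-projection identity of Lemma~\ref{Jones projection}. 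Comparing this with the universal moments--cumulants formula of Proposition~\ref{moments-cumulants formula} (whose sum runs over all of $I(k)$) forces $\sum_{\sigma\in I(k)\setminus I_x(k),\ \sigma\leq\ker\bfj}K^{E_\mathrm{nut}}_\sigma=0$; taking $\bfj$ constant and inducting on $k$, using block-stability (D1) of $I_x$ (so that $\sigma\in I_x(k)$ iff every block has size in $L_{I_x}$), gives the cumulant vanishing $K^{E_\mathrm{nut}}_m[x_1,\dotsc,x_1]=0$ for $m\notin L_{I_x}$ and the identical distribution of the $x_j$.

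The main obstacle is to upgrade this scalar identity to the full operator-valued statement, that is, to insert coefficients $b_0,\dotsc,b_k\in M_\mathrm{nut}\cup\{1\}$; because Boolean independence is genuinely non-unital, the $b_i$ cannot simply be pulled through $E_\mathrm{nut}$. The plan is to approximate each $b_i$ $\sigma$-weakly by $\mathrm{ev}_x$ of polynomials in the far variables $X_m$ $(m\geq n_0)$ and then to use the ``faraway'' estimate \eqref{i.d. cumulants faraway}: since $f^{n_0,n}_\sigma$ involves only variables in $[n_0,n]$, as $n_0\to\infty$ the averaged copies of each $x_{j_l}$ become asymptotically Boolean independent of the approximants of the $b_i$ and distribute according to the block pattern $\inf_{I_x}\ker\bfj$, while Lemma~\ref{lem_eMe} removes exactly those cumulants in which a coefficient sits strictly inside a block. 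Reassembling the surviving terms should produce
\begin{align*}
E_\mathrm{nut}[b_0 x_{j_1}b_1\dotsb x_{j_k}b_k]=\sum_{\substack{\pi\in I_x(k)\\ \pi\leq\ker\bfj}}K^{E_\mathrm{nut}}_\pi[b_0x_1b_1,x_1b_2,\dotsc,x_1b_k].
\end{align*}
By Theorem~\ref{thm_indep} together with a final comparison against the universal cumulant expansion (which forces the $I(k)\setminus I_x(k)$ cumulants, now carrying coefficients, to vanish), this single identity is equivalent to both halves of condition $(3)$, closing the cycle.
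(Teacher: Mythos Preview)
Your outline is essentially the paper's, and the logical skeleton $(3)\Rightarrow(1)\Rightarrow(2)$ plus the single hard implication is correct. But you have the two key tools swapped between the two places where real work is needed, and this matters.

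\textbf{The tail identification is not bookkeeping.} The hypothesis $\overline{B_\infty\Omega}=\overline{M_{\mathrm{nut}}\Omega}$ of Proposition~\ref{prop, martin} is exactly where the ``faraway'' estimate \eqref{i.d. cumulants faraway} is used in the paper, and it is an honest estimate, not bookkeeping. One inclusion is easy since $\ms P^o_{\geq n}\subset\ms P^{o,\Psi_n}_\infty$. For the other, one must show $e_\infty x_{\mathbf j}\Omega\in\overline{M_{\mathrm{nut}}\Omega}$ for every monomial. By Lemma~\ref{Jones projection}, $e_n x_{\mathbf j}\Omega=\mathrm{ev}_x(\mathcal E_n[X_{\mathbf j}])\Omega$, and \eqref{i.d. cumulants faraway} says this is norm-close to $\sum_\sigma\mathrm{ev}_x(f^{n_0,n}_\sigma)\Omega\in\mathrm{ev}_x(\ms P^o_{\geq n_0})\Omega$, uniformly in $n_0$. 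Letting $n\to\infty$ places $e_\infty x_{\mathbf j}\Omega$ in $\overline{\mathrm{ev}_x(\ms P^o_{\geq n_0})\Omega}$ for every $n_0$, hence in $\overline{M_{\mathrm{nut}}\Omega}$. (One also uses that invariance forces $\|x_j\|=\|x_1\|$, so Lemma~\ref{lemma i.d. cumulants} applies.) You flagged this step but did not supply the argument; it is the sole reason the faraway estimate exists.

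\textbf{The operator-valued upgrade needs no approximation.} Your plan to insert $b_i\in M_{\mathrm{nut}}$ by $\sigma$-weak approximation with far-variable polynomials and then reapply \eqref{i.d. cumulants faraway} is a detour, and your sketch (``become asymptotically Boolean independent\dots should produce'') is not a proof. The paper's argument is purely algebraic and much shorter: induct on $\#\{l:b_l\neq 1\}$. If $r$ is the largest index with $b_r\neq 1$, then because $E_{\mathrm{nut}}=e_{\mathrm{nut}}(\,\cdot\,)e_{\mathrm{nut}}$ and $b_r=e_{\mathrm{nut}}b_r e_{\mathrm{nut}}$, the left side factors as $E_{\mathrm{nut}}[\cdots x_{j_r}]\,b_r\,E_{\mathrm{nut}}[x_{j_{r+1}}\cdots]$. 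On the cumulant side, Lemma~\ref{lem_eMe} (which applies precisely because $E_{\mathrm{nut}}$ is a corner) kills every $\sigma$ with $r\sim^{\sigma}r{+}1$; the surviving $\sigma$ split as $\pi\otimes\rho$ with $\pi\in D(r)$, $\rho\in D(k-r)$ by block-stability (D1), so the right side factors the same way. Both factors match by the inductive hypothesis. No limits enter this step at all.

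In short: move the faraway estimate from your last paragraph to the verification of $\overline{B_\infty\Omega}=\overline{M_{\mathrm{nut}}\Omega}$, and replace the approximation plan for the $b_i$'s by the direct induction using $E_{\mathrm{nut}}=e_{\mathrm{nut}}(\,\cdot\,)e_{\mathrm{nut}}$ together with Lemma~\ref{lem_eMe}. With those two reallocations your proof coincides with the paper's.
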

\begin{proof}
By Proposition \ref{easy_part}, we have \eqref{de finetti 3} implies \eqref{de finetti 1}.
We prove each condition \eqref{de finetti 1}, \eqref{de finetti 2} implies \eqref{de finetti 3} in the case $x=s, o,h$, and prove \eqref{de finetti 1} implies \eqref{de finetti 3} in the case $x=b$.
Let $(H,\Omega)$ be the GNS-representation of $(M, \varphi)$.
As $\varphi$ is nondegenerate, we may assume $M \subset B(H)$.
Set $B_\infty := \bigcap_{n \in \N} \mathrm{ev}_x(\ms P^{\Psi_n})$.
At first, we prove $\overline{B_\infty \Omega} = \overline{M_\mathrm{nut} \Omega}$.
Since $\ms P^o_{\geq n} \subset \ms P^{\Psi_n}$,
it is clear that  $ \overline{B_\infty \Omega} \supset \overline{M_\mathrm{nut} \Omega}$.
Let $e_n$ be the orthogonal projection onto the subspace $H_n := \overline{\text{ev}_x(\ms P^{\Psi_n})} \Omega \subset H$.
Set $e_\infty$ be the orthogonal projection onto $\cap_{n \in \infty} H_n = \overline{B_\infty \Omega}$. The projections $(e_n)_{n \in \N}$ strongly converges to $e_\infty$.
To see $ \overline{B_\infty \Omega} \subset \overline{M_\mathrm{nut} \Omega}$, we only need to show that
$
e_\infty x_\bfj \Omega \in \overline{M_\mathrm{nut}\Omega} $
 for any  $k \in \N, \bfj \in [n]^k$.
By Lemma~\ref{Jones projection}, each condition \eqref{de finetti 1} , \eqref{de finetti 2} implies
$
  \mathrm{ev}_x \circ \mathcal{E}_n[X_{j_1}X_{j_2} \dotsb X_{j_k}] \Omega %
  =e_n  x_{j_1}x_{j_2} \dotsb x_{j_k} \Omega
$
.
As each condition \eqref{de finetti 1}, \eqref{de finetti 2} implies  that $(x_j)_{j \in \N}$ are identically distributed, we have $||x_j||=||x_1||$ for any $j \in \N$.
Then by Lemma~\ref{lemma i.d. cumulants}, it holds that
$\text{ev}_x(f^{n_0,n}_\sigma) \Omega $ converges to an element in $\overline{\mathrm{ev}_x ({\ms P}^o_{\geq n_0} )\Omega}$ as $n \ra \infty$.
We have
\begin{align*}
e_\infty x_\bfj \Omega = \lim_{n \ra \infty} \mathrm{ev}_x \circ \mathcal{E}_n[X_{j_1}X_{j_2} \dotsb X_{j_k}] \Omega %
  \in \bigcap_{n_0 \in \N}\overline{\text{ev}_x({\ms P}^o_{\geq n_0}) \Omega}%
  = \overline{M_\mathrm{nut}\Omega}.
\end{align*}

By Lemma~\ref{Jones projection}, $\mathcal{E}_n$ preserves $\varphi \circ \text{ev}_x$ and
by the modified martingale convergence theorem (see Proposition~\ref{prop, martin}) and \eqref{i.d. cumulants}, we obtain   for any $j_1, \dotsc, j_k \in J, k \in \N$,

  \begin{align}\label{main_step1}
  E_\mathrm{nut}[x_{j_1}\dotsb x_{j_k}] = \sum_{\substack{\sigma \in D(k) \\ \sigma \leq \ker\bfj }}K^{E_\mathrm{nut}}_{\sigma}[x_1, \dotsc, x_1].
  \end{align}

The proof is completed by showing that %
for any $b_0, \dotsc, b_k \in M_\mathrm{nut} \cup \{1\},$ $j_1, \dotsc, j_k \in J$, and $k \in \N$,
  \begin{align}\label{main_step2}
  E_\mathrm{nut}[x_{j_1} b_1x_{j_2} b_2 \dotsb b_{k-1}x_{j_k}] = %
  \sum_{\substack{\sigma \in D(k) \\ \sigma \leq \ker\bfj }}K^{E_\mathrm{nut}}_{\sigma}[x_1 b_1, x_1 b_2, \dotsc, x_1].
  \end{align}
We prove this by  induction on $ \# \{ l \in [k-1] ; b_l \neq 1 \}$.
In the case $ \# \{ l \in [k-1] ; b_l \neq 1 \}$ = 1, the claim holds by \eqref{main_step1}.
Pick any $m \in \N \cup \{0\}$ with $m \leq k-1$.
Assume that \eqref{main_step2} is proved in the case that  $ \# \{ l \in [k-1] ; b_l \neq 1 \} < m$.
Consider the case $\# \{ l \in [k-1] ; b_l \neq 1 \} = m$.
Let $r = \max \{ l \in [k-1] ; b_l \neq 1 \}$.
Then by Lemma \ref{lem_eMe},

\begin{align*}
\sum_{\substack{\sigma \in D(k)\\ \sigma \leq \ker\bfj}}%
	  K^{E_\mathrm{nut}}_{\sigma}[x_1 b_1, \dotsc, x_1 b_{r}, \dotsc, x_1]= %
	   \sum_{\substack{\sigma \in D(k), \sigma \leq \ker\bfj \\ r \underset{\sigma}{\not\sim} r+1}}%
	  K^{E_\mathrm{nut}}_{\sigma |_{[1,r]}}[x_1 b_1, \dotsc, x_1] b_{r} %
	  K^{E_\mathrm{nut}}_{\sigma |_{[r+1, k]}}[x_1 b_{r+1}, \dotsc, x_1].
	  \end{align*}
By the property (D1), this equals to
  \begin{align*}
  \sum_{\substack{\pi \in D(r) \\ \pi \leq \ker\bfj|_{[1,r]} }} %
  & K^{E_\mathrm{nut}}_{\pi}[x_1 b_1, \dotsc, x_1]b_{r}%
  \sum_{\substack{\rho \in D(k-r) \\ \rho \leq \ker\bfj|_{[r+1, k]} }}K^{E_\mathrm{nut}}_\rho[x_1 b_{r+1}, \dotsc, x_1]\\
  &=E_\mathrm{nut}[x_{j_1} b_1 \dotsb x_{j_{r}}] b_{r} E_\mathrm{nut}[x_{j_{r+1}} b_{r+1} \dotsb x_{j_k}] %
  =E_\mathrm{nut}[x_{j_1} b_1x_{j_2} b_2 \dotsb b_{k-1}x_{j_k}].
  \end{align*}
By induction on $m$,  \eqref{main_step2} holds for any $b_0, \dotsc, b_k \in M_\mathrm{nut} \cup \{1\}$, which proves \eqref{de finetti 1}.
\end{proof}

\begin{cor}\label{finetti_hbo}
 If the equaivalent conditions in Theorem \ref{finetti} are satisfied for one of $x= o, h$ and $b$, then the following hold:
 \begin{enumerate}
\item[(o)] If $x=o$, $(x_j)_{j \in \N}$ form a  $M_\mathrm{nut}$-valued Boolean centered Bernoulli family.
\item[(h)] If $x=h$, $(x_j)_{j \in \N}$ are Boolean independent, and have even and identically distributions, over $M_\mathrm{nut}$.
\item[(b)] If $x=b$,  $(x_j)_{j \in \N}$ form a  $M_\mathrm{nut}$-valued Boolean shifted Bernoulli family.

\end{enumerate}

\end{cor}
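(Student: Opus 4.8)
The plan is to read off each named distribution directly from assertion (3) of Theorem~\ref{finetti}, which already hands us Boolean independence together with identical distribution over $(E_{\mathrm{nut}}, M_{\mathrm{nut}})$, as well as the vanishing $K_k^{E_{\mathrm{nut}}}[x_1 b_1, x_1 b_2, \dotsc, x_1] = 0$ for every $k \in \N \setminus L_{I_x}$. The only remaining content is to convert this cumulant vanishing into the stated distributional shape, and this is a direct combination of the moments-cumulants formula and the block-factorization of Boolean cumulants from Definition~\ref{def_cumulant}. First I would record the relevant index sets $L_{I_o} = \{2\}$, $L_{I_h} = \{2,4,6,\dotsc\}$, $L_{I_b} = \{1,2\}$, and observe by block-stability (D1) that in each case $I_x(k) = \{\pi \in I(k) \mid |V| \in L_{I_x} \text{ for every block } V \in \pi\}$, so that $I_o(k) = I_2(k)$, $I_h(k)$ is the set of even-block interval partitions, and $I_b(k)$ is the set of interval partitions all of whose blocks have size at most two.

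For the cases $x = o$ and $x = b$ I would fix a single variable $x_j$ and arbitrary $b_1, \dotsc, b_{k-1} \in M_{\mathrm{nut}} \cup \{1\}$, and expand the moment by Definition~\ref{def_cumulant},
\[
E_{\mathrm{nut}}[x_j b_1 x_j b_2 \dotsb x_j] = \sum_{\pi \in I(k)} K_\pi^{E_{\mathrm{nut}}}[x_j b_1, x_j b_2, \dotsc, x_j].
\]
Since each cumulant factors as $K_\pi^{E_{\mathrm{nut}}} = \prod_{V \in \pi}^{\ra} K_{(V)}^{E_{\mathrm{nut}}}$, and since identical distribution lets me rewrite every block factor $K_{(V)}^{E_{\mathrm{nut}}}$ as a full cumulant of length $|V|$ in the variable $x_1$, the vanishing hypothesis annihilates every block factor whose length lies outside $L_{I_x}$. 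Hence $K_\pi^{E_{\mathrm{nut}}} = 0$ unless each block of $\pi$ has size in $L_{I_x}$, that is, unless $\pi \in I_x(k)$, so the sum collapses to $\sum_{\pi \in I_x(k)} K_\pi^{E_{\mathrm{nut}}}[x_j b_1, \dotsc, x_j]$. This is precisely the defining relation of the centered Bernoulli law for $x = o$ and of the shifted Bernoulli law for $x = b$ in Definition~\ref{ber}; combined with the Boolean independence supplied by assertion (3), this yields the asserted $M_{\mathrm{nut}}$-valued Boolean Bernoulli families.

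For the even case $x = h$ the target is that each $x_j$ have an even distribution over $M_{\mathrm{nut}}$, i.e.\ that every moment $E_{\mathrm{nut}}[x_j b_1 x_j b_2 \dotsb x_j]$ with an odd number of factors vanish. Here the key elementary observation is that an interval partition of a set of odd cardinality must contain at least one block of odd size; since $L_{I_h}$ consists only of even numbers, every $\pi \in I(k)$ with $k$ odd then carries a vanishing block factor, forcing the whole moment to be zero. Together again with the Boolean independence and identical distribution from assertion (3), this is exactly the claim for $x = h$. I do not expect a genuine obstacle: the entire corollary is bookkeeping layered on top of Theorem~\ref{finetti}, and the only points needing a little care are the correct matching of $L_{I_x}$ with the admissible block sizes of $I_x$ and, in the $h$ case, the parity remark above.
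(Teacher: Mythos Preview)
Your proposal is correct and is precisely the natural unpacking of the paper's one-line proof, which reads in full: ``The proof directly follows from Theorem \ref{finetti}.'' You supply the expected details---matching $L_{I_x}$ to the admissible block sizes, collapsing the moment--cumulant expansion via the block factorization, and the parity observation for $x=h$---none of which the paper spells out.
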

\begin{proof}
The proof directly follows from Theorem \ref{finetti}.
\end{proof}

\section*{Acknowledgements}
We would like to thank Yasuyuki Kawahigashi for continuing support.
We would like to express our gratitude to Moritz Weber for incisive comments and valuable advice.
We feel deep graduate for Takahiro Hasebe for useful discussion and comments for the first version of our preprint.
Discussions with Weihua Liu  have been illuminating. He pointed out that the generators of our Boolean quantum semigroups in the original version are  possibly unbounded.

\bibliographystyle{plain}
\bibliography{reference_dbeq}

\end{document}